\DeclarePairedDelimiter\abs{\lvert}{\rvert}
\DeclarePairedDelimiter\ceil{\lceil}{\rceil}
\DeclarePairedDelimiter\floor{\lfloor}{\rfloor}
\numberwithin{equation}{section}
\theoremstyle{plain}
\newtheorem{theorem}{Theorem}[section]
\newtheorem{prop}[theorem]{Proposition}
\newtheorem{lem}[theorem]{Lemma}
\newtheorem{remark}[theorem]{Remark}
\providecommand{\ns}[1]{\lVert#1\rVert}
\DeclareMathAlphabet{\pazocal}{OMS}{zplm}{m}{n}
\begin{document}

\begin{frontmatter}
\title{\macrotitle}
\runtitle{\macrorunningtitle}

\begin{aug}
\author{\fnms{Sébastien~J.} \snm{Petit}\thanksref{t1,t2}\ead[label=e1]{sebastien.petit@lne.fr}}

\address{\printead{e1}}

\thankstext{t1}{Laboratoire National de Métrologie et d’Essais, 78197, Trappes Cedex, France}
\thankstext{t2}{Universit\'e Paris-Saclay, CNRS, CentraleSup\'elec, Laboratoire des signaux et syste\`emes, 91190, Gif-sur-Yvette, France}
\runauthor{S.J. Petit}

\end{aug}

\begin{abstract}
   This work considers parameter estimation for Gaussian process interpolation with a periodized version of the Matérn
   covariance function.
   Convergence rates are studied for the joint maximum likelihood estimation of the regularity
   and the amplitude parameters
   when the data are sampled according to the model.
   The mean integrated squared error is also analyzed with fixed and estimated
   parameters, showing that maximum likelihood estimation yields asymptotically the same
   error as if the ground truth was known.
   Finally, the case where the observed function is a fixed deterministic element of a
   Sobolev space of continuous functions is
   also considered, suggesting that a joint estimation
   does not select the regularity parameter as if the amplitude were fixed.
\end{abstract}

\begin{keyword}[class=MSC]
\kwd[Primary ]{62E20}
\kwd{62G20}
\kwd{62M30}
\end{keyword}

\begin{keyword}
\kwd{Fixed-domain asymptotics}
\kwd{Gaussian random field}
\kwd{Matérn-type covariance function}
\kwd{Regularity}
\end{keyword}

\tableofcontents
\end{frontmatter}

\section{Introduction}\label{sec:intro}

Gaussian process interpolation or kriging is a common technique for inferring an unknown function from noiseless data,
which has applications in geostatistics \citep{stein1999interpolation}, computer experiments \citep{Santner},
and machine learning \citep{Rasmussen}.
A covariance function fully characterizes a zero-mean Gaussian process model.
The need for tailoring this function to the task at hand is widely acknowledged in the literature.
The common practice consists in choosing it within a parametric family.
\citet{stein1999interpolation} promotes using
the~\citet{matern1986:_spatial_variations} family of stationary covariance functions.
Assuming isotropy and using the
parameterization
from \citet[p. 31]{stein1999interpolation},
this family is defined on $\RR^d$ by its spectrum:
\begin{equation}\label{eq:matern_general_case}
\tilde{k}\colon \omega \in \RR^d \mapsto
\frac{\phi}{\left( \alpha^2 + \ns{\omega}^2 \right)^{\nu + d/2}},
\end{equation}
which is indexed by three parameters: the \emph{regularity} parameter~$\nu$,
what we shall call the \emph{amplitude}
parameter~$\phi$, and the parameter~$\alpha$. See \citep{stein1999interpolation} for a comprehensive
description of the effect of these parameters.
In short, the parameter~$\nu$
is shown to be the key quantity governing the asymptotics of the prediction error.
The amplitude parameter $\phi$ does not impact the posterior mean
predictions but matters for uncertainty quantification, whereas~$\alpha$
is less important asymptotically. 

One can safely say that cross-validation
and maximum likelihood estimation are the most popular techniques for selecting
Gaussian process parameters from data. 
We shall focus on the latter for the rest of this article.

For observations from a Matérn process with
parameter~$\theta_0 = \left(\nu_0, \phi_0, \alpha_0\right)$,
a distinction is often made between increasing and fixed-domain asymptotic
frameworks \citep[see, e.g.,][for a review]{bachoc2021asymptotic}.
While several increasing-domain asymptotic frameworks have been exhaustively studied
\citep[see, e.g.,][]{mardia1984:_mle_incr, bachoc2014:_asympt},
a comprehensive asymptotic analysis of maximum likelihood estimation in fixed-domain
frameworks---i.e., on bounded domains---has
long been an open question.
Previous works mainly consider the estimation
of~$\phi$ and~$\alpha$
for a known~$\nu$
\citep[see, e.g.,][who often use alternative parametrizations]{ying1991:_ou, ying1993:_maximum,
vdv96:_ying_correction, zhang2004:_micro, Loh_2006, kaufman2013:_role, li2020bayesian}.

The asymptotics of~$\hat{\nu}_n$ seem to have been less studied.
\citet[][Section 6.7]{stein1999interpolation}
considers a periodized version of the Matérn
model~\eqref{eq:matern_general_case} %
with equispaced
observations on the torus and makes a conjecture about the asymptotic behavior of the joint maximum likelihood
estimate~$\hat{\theta}_n = (\hat{\nu}_n, \, \hat{\phi}_n, \, \hat{\alpha}_n)$ based
on the Fisher information matrix \citep[see also][who considers equispaced noisy observations on
the circle]{stein93:_splines_estimated_order}. %
This topic has only recently regained popularity.
Indeed, \citet{chen2021:_consistency} used the same framework as~\citeauthor{stein1999interpolation} to show
that~$\hat{\nu}_n$ is consistent if the other parameters remain fixed
(i.e., enforced to arbitrary values, which may not be~$\phi_0$ and~$\alpha_0$).
Continuing with fixed~$\phi$ and~$\alpha$,
\citet{karvonen2022:_asymptotic} has recently shown 
that~$\liminf \hat{\nu}_n \geq \nu_0$
in the case of quasi-uniform observations on
a ``nice'' bounded domain of~$\RR^d$. %
Similarly, \citet{korte2023smoothness} have shown that~$\hat{\nu}_n$ is consistent
for fixed~$\phi$ and~$\alpha$ and
quasi-uniform observations on closed Riemannian manifolds. %
The asymptotic analysis of the joint maximum likelihood estimation of~$\nu$ and~$\phi$
remains an open problem \citep[Section~3.1.2]{porcu2024matern}. %

Another long-standing open problem
(see notably \citealt{putter_young:_estimated} and \citealt[in the preface]{stein1999interpolation})
is that of predictions with estimated parameters:
how accurate and reliable are the predictions if one selects a
parameter~$\hat{\theta}_n$
from data and uses it to make subsequent
predictions?
The critical influence of~$\nu$ on the kriging error suggests that
the asymptotic behavior of~$\hat{\nu}_n$ is a key element
in answering this question.

Another research line consists in studying parameters estimation
assuming observations from a fixed deterministic function~$f$.
The definition of a ground truth~$\theta_0$ is not obvious in this setting.
Instead, the aim is to study which ``features'' of~$f$ are used
by the estimator to select a Gaussian process model and how this affects
predictions.
See \citet{karvonen2020maximum, karvonen2022:_maximum_lik_ill}
for analyses of maximum likelihood estimators of other parameters given a fixed regularity.
Regarding~$\hat{\nu}_n$, the tight lower bound shown by~\citet{karvonen2022:_asymptotic}
also covers the case of a continuous function from a Sobolev space.
The result shows an interesting connection with sample path properties.
More precisely, define the smoothness~$\nu_0(f)$ of~$f$ in a Sobolev sense so that~$\nu_0(\xi) = \nu_0$ holds almost surely
for any Matérn process~$\xi$ with regularity~$\nu_0$.
For fixed~$\phi$ and~$\alpha$,
\citet{karvonen2022:_asymptotic} showed
that~$\liminf \hat{\nu}_n \geq \nu_0(f)$ and,
under (essentially) a self-similarity hypothesis on the spectrum of~$f$,
that~$\hat{\nu}_n$
converges to~$\nu_0(f)$.
This means that, if the spectrum of~$f$ is well-behaved,
then maximum likelihood estimation fits~$\nu$
so that~$f$ and the sample paths have the same Sobolev smoothness.
It echoes similar findings in Bayesian nonparametric statistics
with noise-corrupted observations
(see notably \citealt{belitser_ghosal2003:_infinite_dim},
\citealt[p.~779]{knapik2016bayes}, and \citealt[pp.~1397 and~1404]{szabo2015frequentist}),
where, with our notations, similar conditions on
the truth imply that $\hat{\nu}_n \to \nu_0(f)$. 

This article focuses on the one-dimensional version of the framework used by
\citet[][Section 6.7]{stein1999interpolation} to analyze the joint maximum likelihood estimation
of~$\left(\nu, \phi, \alpha\right)$.
This kind of model has limited applicability but is usually studied because of its simplicity,
in the hope that conclusions can be transferred to more generally applicable models \citep[see][]{wahba1975smoothing, Wahba90a,
stein93:_splines_estimated_order, 
stein1997efficiency,
stein1999interpolation,
stein_2014:_limitations_low_rank_approximations_for_covariance_matrices_of_spatial_data,
chen2021:_consistency}.
In particular, this framework enables fairly explicit derivations, which could shed light
on potential steps to generalize the results
(see notably Section~\ref{sec:prior_works}). %

On the one hand,
a~$\sqrt{n}$-rate asymptotic normality result is shown for---a linear transform of---$(\hat{\nu}_n, \, \hat{\phi}_n)$
when observing a Matérn process.
Whether the (non-identifiable) parameter~$\alpha$ is known or estimated does not affect the limiting distribution.
Furthermore, one consequence is that the ratio between the mean squared error
with estimated parameters and the one with known parameters converges to unity.
On the other hand, it is shown that a joint estimation does not result
in the behavior discussed in the previous paragraph.
The key takeaway is that only the smaller asymptotic bound~$\liminf \hat{\nu}_n \geq \nu_0(f) - 1/2$ holds.
This means that the reproducing kernel Hilbert space is asymptotically too small to contain~$f$ but does not
say whether the Sobolev smoothness of the sample paths exceeds or converges to~$\nu_0(f)$.
To give a quantitative description of the behavior above~$\nu_0(f) - 1/2$, we derive the large sample limit of the (profile) likelihood on a class of functions that is
small but satisfies the usual spectrum conditions ensuring that~$\hat{\nu}_n \to \nu_0(f)$
for fixed~$\phi$ and~$\alpha$.
The minimizer of this limit has no closed-form expression~(see~\ref{eq:dev_nll_misp}),
but we show that a numerical approximation is not maximized by~$\nu_0(f)$.
A~strong consistency result on sample paths shows that the set of functions~$f$ such that $\hat{\nu}_n \to \nu_0(f)$
has probability one under a Matérn process. %
The findings are illustrated by numerical experiments. %

To summarize, the contributions of the present article are threefold. First, we
prove consistency and asymptotic normality results on the maximum likelihood estimates of
the parameters~$\nu$ and~$\phi$.
Then, we leverage these convergence rates to analyze the expected integrated error,
showing that estimating the parameters yields the same error asymptotically as if the ground truth was known. Finally, we investigate model selection by maximum likelihood estimation
on a deterministic function.

The article is organized as follows. Section~\ref{sec:context} introduces the
periodic framework and our notations and
Section~\ref{sec:prior_works} discusses how this framework helps for circumventing the challenges posed by the study of the profile likelihood.
Then, Section~\ref{sec:results} gives the main results. %
Section~\ref{sec:num_random} presents numerical experiments illustrating the findings. %
Section~\ref{sec:freq} provides our results on the deterministic case.
Finally, Section~\ref{sec:concl} presents our conclusions and discusses generalization.

\section{Gaussian process interpolation on the circle}\label{sec:context}

\subsection{Framework}\label{sec:framework}

Let $f : \left[0, \, 1\right] \to \Rset$ be a continuous periodic function observed on
a regular grid:
$\{ j/n, \ 0 \leq j \leq n-1 \}$.
Consider the periodic version
of the Matérn family of stationary covariance functions~\eqref{eq:matern_general_case}
introduced by \citet[Section 6.7]{stein1999interpolation} and
defined by
the uniformly absolutely convergent
Fourier series
$$k_{\theta}\colon x \in  \RR \mapsto \sum_{j \in \Zm} \underline{c}_j(\theta) e^{2\pi i x j}$$%
with coefficients:
\begin{equation}\label{eq:matern}
\underline{c}_j(\theta) = \frac{\phi}{(\alpha^2 + j^2)^{\nu + 1/2}}, \quad \mathrm{for} \, j \in \Zm \ \mathrm{and} \,  \theta = (\nu, \phi, \alpha) \in (0, +\infty)^3.
\end{equation}%
The function $k_\theta$ is continuous and strictly positive definite
\citep[see, e.g.,][Theorem 1]{Gneiting2011:_strictly_positive}.
The description of the parameters~$\nu$,~$\phi$, and~$\alpha$ from the \hyperref[sec:intro]{Introduction}
carries to this periodic one-dimensional version.
A specificity is that~$\alpha$ is
not identifiable as different values yield equivalent
probability measures. However, $\nu$ and $\phi$ are identifiable
\citep[see, e.g.,][Chapter 4 and Section 6.7]{stein1999interpolation}.

Assuming a centered process,
the usual task in Gaussian process interpolation is to use the model
$\xi \sim \mathrm{GP} \left(0, \, k_{\theta} \right)$ to infer the function~$f$ from the noiseless data
\begin{equation}\label{eq:obs}
Z = \left( f \left( 0 \right), \, f \left( 1/n \right), \dots, f \left( 1 - 1/n \right) \right)\tr.
\end{equation}%
The function $f$ is usually predicted using the posterior mean function
given by the kriging equations~\citep{matheron71}.
This predictor can be written simply in the framework presented above.
\begin{prop}\label{prop:blup}
Let $n \geq 1$ and $f\colon [0, \, 1] \to \RR$ be a continuous periodic function
with absolutely summable Fourier coefficients $c_j(f)$.
Writing~$\hat{f}_n$ for the posterior mean function given $Z$ and the parameter~$\theta$,
we have:
\begin{equation}\label{eq:blup}
\hat{f}_n(x) = \sum_{j \in \Zm} \left( \frac{
\sum_{q \in j + n \Zm} c_{q}(f)
}{
\sum_{q \in j + n \Zm} \underline{c}_{q}(\theta)
} \right) \underline{c}_{j}(\theta)  e^{2\pi i x j} \quad \mathrm{for} \ x \in [0, \, 1].
\end{equation}
The convergence of~\eqref{eq:blup} holds uniformly absolutely.
\end{prop}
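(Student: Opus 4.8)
The plan is to start from the classical closed form of the kriging predictor, $\hat f_n(x) = k_\theta(x)^\top K_n^{-1} Z$, where $K_n = \big(k_\theta\big((l-m)/n\big)\big)_{0 \le l, m \le n-1}$ is the covariance matrix of the grid values and $k_\theta(x) = \big(k_\theta(x - l/n)\big)_{0 \le l \le n-1}$, and to evaluate each factor in the discrete Fourier basis, exploiting that $K_n$ is circulant.

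First I would record the two spectral ingredients. Substituting the (uniformly absolutely convergent) Fourier series of $k_\theta$ into the entries of $K_n$ and using the root-of-unity identity $\sum_{l=0}^{n-1} e^{2\pi i (j-k) l / n} = n\,\mathbbm{1}\{j \equiv k \bmod n\}$ shows that $K_n$ is diagonalized by the Fourier vectors $\big(e^{2\pi i k l / n}\big)_l$ with eigenvalues $\lambda_k = n \sum_{j \in k + n\mathbb{Z}} \underline c_j(\theta)$; since $\underline c_j(\theta) > 0$ for all $j$ by~\eqref{eq:matern}, every $\lambda_k > 0$ and $K_n$ is invertible. The same identity gives the discrete Fourier transform of $Z$, namely $\hat Z_k = \sum_{l=0}^{n-1} f(l/n) e^{-2\pi i k l / n} = n \sum_{j \in k + n\mathbb{Z}} c_j(f)$, the interchange of the two summations being licensed by $\sum_j \lvert c_j(f)\rvert < \infty$, which holds because $f$ is assumed to be the pointwise absolute limit of its Fourier series. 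Hence $w := K_n^{-1} Z$ has Fourier coefficients $\hat w_k = \hat Z_k / \lambda_k = \big(\sum_{j \in k + n\mathbb{Z}} c_j(f)\big)\big/\big(\sum_{j \in k + n\mathbb{Z}} \underline c_j(\theta)\big)$, which is exactly the bracketed ratio appearing in~\eqref{eq:blup}.

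Next I would substitute these into $\hat f_n(x) = \sum_{l=0}^{n-1} k_\theta(x - l/n)\, w_l$, expand $k_\theta(x - l/n) = \sum_j \underline c_j(\theta) e^{2\pi i j x} e^{-2\pi i j l / n}$ and $w_l = \tfrac1n \sum_{k=0}^{n-1} \hat w_k e^{2\pi i k l / n}$, and perform the finite sum over $l$ with the same root-of-unity identity; the orthogonality collapses the double sum, identifying the Fourier mode $j$ with the residue $j \bmod n$, and leaves $\hat f_n(x) = \sum_{j \in \mathbb{Z}} \hat w_{j \bmod n}\, \underline c_j(\theta) e^{2\pi i j x}$, which is~\eqref{eq:blup}. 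To keep the manipulations rigorous I would first check that the series on the right-hand side of~\eqref{eq:blup} converges absolutely, hence uniformly: grouping its terms by residue class modulo $n$ and using $\underline c_j(\theta) \le \sum_{j_1 \in j + n\mathbb{Z}} \underline c_{j_1}(\theta)$, its $\ell^1$-norm is bounded by $\sum_j \lvert c_j(f)\rvert < \infty$. This also justifies interchanging $\sum_l$ and $\sum_j$ above.

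The argument is essentially bookkeeping; the only point requiring some care is the aliasing step, i.e. keeping track of which Fourier modes of $k_\theta$ and of $f$ become identified on the grid and checking that each rearrangement of series is covered by absolute convergence. An essentially equivalent alternative I could use avoids $K_n^{-1}$ altogether: verify directly that the right-hand side of~\eqref{eq:blup} (i)~interpolates the data, i.e.\ equals $f(m/n)$ at each grid point, again by grouping Fourier modes by residue class modulo $n$, and (ii)~lies in $\mathrm{span}\{k_\theta(\cdot - l/n) : 0 \le l \le n-1\}$, because a function of the form $\sum_j \underline c_j(\theta)\, M_j\, e^{2\pi i j x}$ with an $n$-periodic multiplier $(M_j)$ is precisely such a linear combination (the coefficients being recovered from $(M_j)$ by inverse discrete Fourier transform), and then invoke the standard characterization of the kriging predictor as the unique interpolant in that span.
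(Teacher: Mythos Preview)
Your proof is correct and follows essentially the same route as the paper: start from the kriging formula $\hat f_n(x)=k_{\theta,x}^\top K_\theta^{-1}Z$, use the circulant/DFT diagonalization of $K_\theta$ to compute $W^\ast Z$ and $W^\ast k_{\theta,x}$ (equivalently your $\hat Z_k$ and the Fourier expansion of $k_\theta(x-\cdot)$), and read off~\eqref{eq:blup}. The paper records exactly these two transforms and then says ``the result follows from elementary manipulations''; you have simply made those manipulations and the absolute-convergence bookkeeping explicit, and your alternative interpolation-in-the-span argument is a valid variant but not needed.
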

The proof is deferred to Appendix~\ref{sec:proofs_framework}.

The expression~\eqref{eq:blup} shows how the posterior mean function
approximates~$f$: it transforms the Fourier coefficients
of $k_{\theta}$ into those of $f$ using the ratio of their discrete Fourier transforms.
Finally, we also define the integrated squared error:
\begin{equation}\label{eq:ISE}
\mathrm{ISE}_n \left( \nu, \, \alpha; \, f \right) = \int_0^1 \left( f -  \hat{f}_n \right)^2.
\end{equation}
Note that it does not depend on $\phi$.

\subsection{Maximum likelihood estimation}\label{sec:def_mle}

Given the observations $Z$ and $\Theta \subset \left(0, \, +\infty\right)^3$,
a maximum likelihood estimate is defined by
$\hat{\theta}_n = ( \hat{\nu}_n, \hat{\phi}_n, \hat{\alpha}_n )$
minimizing (a linear transform of) the negative log-likelihood:
\begin{equation}\label{eq:ALL}
\Lset_n\colon \theta \in \Theta \mapsto
n^{-1} \left( \ln \left( \det \left(  K_{\theta} \right) \right) + Z\tr K_{\theta}^{-1} Z \right),
\end{equation} 
with ties broken arbitrarily and $K_{\theta}$ the covariance matrix of $Z$ according to $k_{\theta}$.

The estimators $\hat{\nu}_n$ and $\hat{\alpha}_n$
are assumed bounded in this work, i.e.,
we take $\Theta = N \times \left(0, \, +\infty\right) \times A$ with
$N$ and $A$ compact intervals.
However, keeping~$\hat{\phi}_n$ unbounded is key to our main results and for discussing
the deterministic case in Section~\ref{sec:freq}.
Write $K_{\theta} = \phi R_{\nu, \alpha}$ 
for $\theta = \left(\nu, \, \phi, \alpha \right) \in \left(0, \, + \infty \right)^3$.
The following proposition gives an expression for the \emph{profile}
likelihood, i.e., the infimum of $\Lset_n(\nu, \, \phi, \, \alpha)$
with respect to~$\phi \in \left(0, \, + \infty\right)$ for fixed $\nu$ and $\alpha$.
\begin{prop}{\citep[see, e.g.,][Section 3.3.2]{Santner}}\label{prop:profile_lik}
Let $\nu, \alpha > 0$.
It holds that
\begin{equation}\label{eq:profile_lik}
\inf_{\phi > 0} \Lset_n(\nu, \, \phi, \, \alpha) = 1 + n^{-1} \ln(\det(R_{\nu, \alpha})) + \ln \left( \frac{Z\tr R_{\nu, \alpha}^{-1} Z}{n} \right).
\end{equation}
Moreover, if $Z$ is nonzero, then
the infimum is uniquely reached by $\hat{\phi}_n = Z\tr R_{\nu, \alpha}^{-1} Z/n$.
\end{prop}
(The case $Z = 0$ is covered since both sides of~\eqref{eq:profile_lik}
match.)

\section{Studying the profile likelihood using discrete Fourier transforms}\label{sec:prior_works}

\subsection{Linking the spectra of $k_\theta$ and $K_\theta$}

Consider temporarily the more general case of a strictly positive probability measure~$Q$
on a compact metric space~$\XX$
and a continuous kernel~$k \colon \mathbb{X} \times \mathbb{X} \to \mathbb{R}$ with
a constant diagonal~$\{ k(x, x), \, x \in \XX\}$.
This covers commonly used stationary kernels on tori and compact subsets of~$\mathbb{R}^d$.
Mercer's theorem \citep[see, e.g.,][]{steinwart2012mercer}
ensures the existence of a sequence of real eigenvalues~$\mu_0 \geq \mu_1 \geq \dots > 0$
and an $L^2(Q)$-orthonormal sequence~$(\phi_m)_m$ of eigenfunctions for the integral
operator~$T_k \colon g \in L^2(Q) \mapsto \int_{\XX} k(\cdot, x) g(x) \ddiff Q(x)$.
It also holds that:
\begin{equation}\label{eq:mercer_general}
k(x, y) = \sum_{m = 0}^{+ \infty} \mu_m \phi_m(x) \phi_m^{\ast}(y),
\quad
\mathrm{for} \, x, y \in \XX.
\end{equation}

Let~$x_1, \dots, x_n \in \XX$ and write~$\mu_{0, n} \geq \dots \geq \mu_{n - 1, n}$
for the eigenvalues of the covariance matrix~$K$ according to~$k$.
Since~$k$ has a constant diagonal, it holds that
\begin{equation}\label{eq:general_eigen_relation}
\sum_{m=0}^{n-1} \frac{\mu_{m, n}}{n}
= \frac{\mathrm{Tr}(K)}{n}
= \int_{\XX} k(x, x) \ddiff Q(x)
= \sum_{m = 0}^{+ \infty} \mu_m.
\end{equation}
Writing~$Q_n$ for the empirical measure obtained from~$x_1, \dots, x_n$,
the matrix~$K$ can be identified with the covariance
operator~$T_n \colon g \mapsto \int_{\XX} k(\cdot, x) g(x) \ddiff Q_n(x)$.
Therefore,
if~$Q_n $ approximates~$Q$ in some sense,
then~$T_n$ approximates~$T_k$,
and thus the normalized
eigenvalue~$\mu_{m, n}/n$
approximates~$\mu_m$,
at least for~$m$ not too large
\citep[see][for the case of samples from~$Q$]{2000:_kolt_gine_random_matrix, braun2006accurate}.
The equality~\eqref{eq:general_eigen_relation} suggests that~$\mu_{m, n}/n$ is biased upwards
by the mixing of all the~$\mu_p$.

Return to the framework and the notations from Section~\ref{sec:framework}.
In this case, the kernel~$k_\theta$ can be expanded as~\eqref{eq:mercer_general},
with~$\phi_m(x) = e^{2\pi i x m}$ and~$\mu_m = \underline{c}_m(\theta)$.
As \citet{Craven1978SMOOTHINGND}
and \citet[Section 6.7]{stein1999interpolation} point out,
the framework introduced in Section~\ref{sec:framework}
is convenient.
More precisely, it provides a natural link between~$K_{\theta}$ and~$k_{\theta}$ using discrete Fourier transforms
(see Appendix~\ref{sec:circulant} for details).
In particular, it gives a closed-form identity
\begin{equation}\label{eq:link_eigen_periodic}
n^{-1} \phi \lambda_{m, n} = \sum_{j \in \Zm} \underline{c}_{m + n j}(\theta)
\end{equation}
linking the
eigenvalues~$\phi \lambda_{0, n} , \dots, \phi \lambda_{n-1, n}$ of~$K_\theta$ to those
of~$k_\theta$. %
Furthermore, the matrices~$K_\theta$ share the same eigenvectors.
Considering~\eqref{eq:general_eigen_relation},
the equality~\eqref{eq:link_eigen_periodic}
shows how the eigenvalues of~$k_{\theta}$ are combined to obtain those of~$K_{\theta}$. %
It holds that $n^{-1} \phi \lambda_{m, n} \to \underline{c}_m (\theta)$ for a fixed~$m$
but the ratio~$n^{-1} \phi \lambda_{m, n} / \underline{c}_m (\theta)$ remains bounded away from
one for~$m$ close to~$n/2$. %
It will turn out that analyzing this ratio makes it possible to study the profile likelihood. %

\subsection{The consistency of $\hat{\nu}_n$ for fixed $\phi$ and $\alpha$}\label{sec:chen}

Assuming observations from $\xi \sim \mathrm{GP} \left(0, \, k_{\theta_0} \right)$
under a similar model
with $\theta_0 = \left( \nu_0, \, \phi_0, \, \alpha_0\right) \in \left(0, \, +\infty\right)^3$,
\citet{chen2021:_consistency} show the consistency of $\hat{\nu}_n$
for equispaced observations
on the $d$-dimensional torus for fixed parameters~$\phi$ and~$\alpha$. A sketch of
their reasoning for $d = 1$ is provided in this paragraph.
The spectrum of $K_\theta$ is studied by showing
that
\begin{equation}\label{eq:rough_bound_lambda}
n^{-1} \phi \lambda_{m, n} = e^{\Ocal(1)} \underline{c}_m (\theta) = e^{\Ocal(1)} m^{-2\nu - 1}
\end{equation}
uniformly in $\nu$ and $1 \leq m \leq n/2$.\footnote{The $\lambda_{m, n}$ satisfy
$\lambda_{m, n} = \lambda_{n - m, n}$.}
This approximation yields:
\begin{equation}\label{eq:dev_norm}
    \left\{
      \begin{array}{ll}
      \ln \left( \det \left( K_{\theta} \right) \right)  & \, = \, -2\nu n \ln(n) + n \ln (\phi ) + n\Ocal(1), \\[5pt]
        Z\tr K_{\theta}^{-1} Z & \,  = \,  \phi^{-1} \phi_0 \ \Ocal_{\PP} \left( \ln(n) \right) \ \mathrm{if} \ \nu \leq \nu_0 - 1/2, \\[5pt]
        Z\tr K_{\theta}^{-1} Z & \,  = \,  \phi^{-1} \phi_0 \ e^{\Ocal_{\PP}(1)} n^{1 + 2(\nu - \nu_0)} \ \mathrm{if} \ \nu > \nu_0 - 1/2, \\
      \end{array}
      \right.
\end{equation}
with uniform $\Ocal_{\PP}$-terms on some regularity ranges.
The consistency for fixed parameters~$\phi$ and~$\alpha$
follows by observing that~$\nu_0$ is the turning point
where the quadratic form starts dominating the log-determinant.
The latter claims are also true if~$\nu$ is estimated jointly with $\phi \in F$ for a set~$F$
bounded away
from zero and infinity.

\begin{remark}\label{remark:rem_korte}
Recently, \citet{korte2023smoothness} considered a similar model in the more general case
of quasi-uniform observations on closed Riemannian manifolds and gave a consistency result for~$\hat{\nu}_n$ with fixed~$\alpha$
and~$\phi$. They use different arguments to prove
(sufficient results slightly weaker than)~\eqref{eq:dev_norm} without establishing~\eqref{eq:rough_bound_lambda}.
In particular, bounds with matching rates for conditional
variance are used to analyze the log-determinant.
\end{remark}

\subsection{Profiling the likelihood}\label{sec:contrib}

Consider now the case $F = \left(0, \, +\infty\right)$ by 
plugging~\eqref{eq:dev_norm} into~\eqref{eq:profile_lik}, for~$\nu > \nu_0 - 1/2$, to get
\begin{equation}\label{eq:profiled_computation}
\inf_{\phi > 0} \Lset_n(\nu, \, \phi, \, \alpha) = \Ocal_{\PP}(1) - 2 \nu_0 \ln \left( n \right),
\end{equation}
which is not sharp enough.
Therefore, a more precise analysis of how the
spectrum of~$K_{\theta}$ fluctuates around the one of~$k_{\theta}$
is needed to study the profile likelihood.
The following section provides an ingredient for this purpose.
Coordination with tools for proving uniform central limit theorems makes
it possible to study convergence rates for parameter estimation and
prediction error in Section~\ref{sec:results}.
Developments for studying the profile likelihood are used
to provide insights on model selection in the case of a fixed deterministic function
from a Sobolev space in Section~\ref{sec:freq},
which also discusses related works in this setting.

\subsection{A symmetrized version of the Hurwitz zeta function}

\citet[Section 6.7]{stein1999interpolation} uses the function
$$
\gamma\colon \left(s; \, x\right) \in \left(1, \, +\infty\right)  \times \left(0, \, 1\right) \mapsto \sum_{j \in \Zm} \frac{1}{ \abs{j + x}^{s}},
$$%
for deriving the asymptotics of the Fisher information matrix of the model presented in Section~\ref{sec:framework}.
It will also play a major role in our analysis of the likelihood criterion.

The function~$\gamma$ is (jointly) smooth and related to the Hurwitz zeta function~$\zeta_H$ by:
\begin{equation}\label{eq:hurwitz_zeta_link}
\gamma\left(s; \, x\right)  = \zeta_H(s; \, x) + \zeta_H(s; \, 1 - x),
\quad \left(s, \, x\right) \in \left(1, \, +\infty\right) \times \left(0, \, 1\right).
\end{equation}
Moreover, the function $\gamma \left(s; \cdot \right)$ is symmetric with respect
to~$1/2$ for $s > 1$.

\section{Main results}\label{sec:results}

\subsection{Standing assumptions}\label{sec:assumptions}

Consider the framework presented in Section~\ref{sec:framework}
and suppose that the observations are sampled from a
Gaussian process~$\xi \sim \mathrm{GP} \left(0, \, k_{\theta_0} \right)$,
for a parameter~$\theta_0 = (\nu_0, \phi_0, \alpha_0) \in (0, +\infty)^3$.
The Fourier series representation of~$k$ yields the Karhunen-Loève expansion
\begin{equation}\label{eq:xi}
\xi\left( x \right) =
\frac{1}{\sqrt{2}} \lim_{J \to + \infty} \sum_{j = -J}^J \sqrt{\underline{c}_j(\theta_0)}\left( U_{1, \abs{j}} + i  U_{2, \abs{j}} \sign(j) \right) e^{2\pi i x j}
\end{equation}
for $x \in \left[0, \, 1\right]$,
with $( U_{q, j} )_{q \in \{ 1, 2  \}, j \geq 0}$
independent Gaussian variables such that $U_{2, 0} = 0$,
$ U_{1, 0} \sim \Ncal \left(0, 2\right)$,
and $U_{q, j} \sim \Ncal \left(0, 1\right)$ for $q \in \{ 1, 2  \}$ and $j \geq 1$.
The convergence of the expansion~\eqref{eq:xi} is meant
pointwise both in $L^2\left( \PP \right)$ and $\PP$-almost surely.
We will sometimes assume $\nu_0 > 1/2$
to ensure unconditional convergence.

Let $\hat{\theta}_n = ( \hat{\nu}_n, \hat{\phi}_n, \hat{\alpha}_n )$ be a maximum
likelihood estimate as defined in Section~\ref{sec:def_mle} for some
$\Theta = N \times \left(0, \, +\infty\right) \times A$ with
$A, N \subset \left(0, \, +\infty\right)$
compact intervals and $\nu_0 \in N$.
The following sections give convergence rates for parameter estimation and prediction error.

\subsection{Convergence rates of maximum likelihood estimation}

The following result states the strong consistency of~$\hat{\nu}_n$.
\begin{theorem}\label{thm:consistency_nu}
Let $\Theta = N \times \left(0, \, +\infty\right) \times A$ with
$N$ and $A$ compact intervals and $\nu_0 \in N$.
Then, the convergence $\hat{\nu}_n \to \nu_0$ holds almost surely.
\end{theorem}
The proof is deferred to Appendix~\ref{sec:proof_consistency}.
A key step is to show
that (a shift of) the profile likelihood converges almost surely to
\begin{equation}\label{eq:lik_crit_main}
\int_0^1 \ln \left( \gamma \left(2\nu + 1; \cdot \right)  \right)
+  \ln \left( \int_0^1
\frac{
\gamma \left(2\nu_0 + 1; \cdot \right)
}{
\gamma \left(2\nu + 1; \cdot \right)
}
 \right),
\end{equation}
for $\nu > \nu_0 - 1/2$. The first term is a refinement of the~$\Ocal(1)$
appearing in~\eqref{eq:dev_norm} for the log-determinant.
The second term is a refinement of the~$\Ocal_{\PP}(1)$ appearing for the quadratic form.
Jensen inequality shows that~\eqref{eq:lik_crit_main}
is minimized by taking $\nu = \nu_0$.

Furthermore, similarly to \citet[Section 6.7]{stein1999interpolation}, let us define
\begin{equation}\label{eq:def_psi}
\psi_{\nu}\colon x \in \left(0, \, 1\right) \mapsto
\frac{
 \sum_{j \in \Zm} \left| x + j \right|^{-2\nu - 1} \ln \left| x + j \right| 
}{
 \sum_{j \in \Zm} \left| x + j \right|^{-2\nu - 1}
},
\quad 
\mathrm{for} \ \nu > 0,
\end{equation}
which is square integrable on $\left(0, \, 1\right)$.
The following result proves
the conjecture made by \citet[p. 194]{stein1999interpolation} when $d=1$ and $\hat{\nu}_n$
and~$\hat{\alpha}_n$ are bounded. The proof is deferred to Appendix~\ref{sec:proof_asympt_norm}.
\begin{theorem}\label{thm:asympt_norm}
Let $\Theta = N \times \left(0, \, +\infty\right) \times A$ with
$N$ and $A$ compact intervals and $\nu_0 \in N$.
Then, we have the following convergence in distribution
\begin{equation*}
\sqrt{2n}
\begin{pmatrix}
 \frac{\hat{\phi}_n - \phi_0}{2\phi_0} - \left( \ln(n) + \EE \left( \psi_{\nu_0}( V ) \right)\right) \left( \hat{\nu}_n - \nu_0 \right) \\
 \sqrt{\var \left( \psi_{\nu_0}(V) \right)} \left( \hat{\nu}_n - \nu_0 \right)  \\
\end{pmatrix}  \leadsto \Ncal \left(0, \, I_2\right),
\end{equation*}%
where $V$ is a random variable distributed uniformly on $\left(0, \, 1\right)$.
\end{theorem}%
Observe that the asymptotic behavior of~$( \hat{\nu}_n, \, \hat{\phi}_n)$
is not influenced by whether the parameter~$\alpha$ is fixed, estimated, or even known. 

\subsection{Convergence rates of the integrated squared error}

This section states our results about the expectation of~\eqref{eq:ISE}
with fixed and estimated parameters.
The proofs are deferred to~\secproofserrorsuppmat.
We begin with the case of fixed parameters.

For $\nu, \nu_0 > 0$ and $x \in \left(0, \, 1\right)$, define
$$
\vartheta_{\nu; \nu_0}( x )
=
\frac{
\gamma \left(4\nu + 2; x\right) \gamma \left( 2\nu_0 + 1; x \right)
}{
\gamma^2 \left( 2\nu + 1; x \right)
}
+ 
\gamma \left(2 \nu_0 + 1; x \right)
- 2 \frac{
\gamma \left(2\nu + 2 \nu_0 + 2; x\right)
}{
\gamma \left(2\nu + 1; x \right)
}
$$
which is smooth and integrable when $\nu > (\nu_0 - 1)/2$. %
In this case, the notation~$\mathcal{C}_{\nu_0}(\nu) = \int_0^1 \vartheta_{\nu; \nu_0}$ will be used. %

The following result states the asymptotics of the prediction error
with fixed parameters.
\begin{theorem}\label{thm:error_nu_fixed}
Let $\left(\nu, \, \alpha\right) \in \left(0, \, +\infty\right)^2$ and~$\nu_0 > 1/2$.
Then,
\begin{equation*}
\EE \left( \mathrm{ISE}_n \left(\nu, \, \alpha; \, \xi\right) \right) \lesssim \frac{1}{n^{4 \nu + 2}}, \ \mathrm{for} \ \nu < (\nu_0 - 1)/2,
\end{equation*}
\
\begin{equation*}
\EE \left( \mathrm{ISE}_n \left(\nu, \, \alpha; \, \xi\right) \right) \lesssim \frac{\ln(n)}{n^{2\nu_0}}, \ \mathrm{for} \ \nu = (\nu_0 - 1)/2,
\end{equation*}
and
\begin{equation*}
n^{2\nu_0} \EE \left( \mathrm{ISE}_n \left(\nu, \, \alpha; \, \xi\right) \right) \to \phi_0 \mathcal{C}_{\nu_0}(\nu), \ \mathrm{otherwise}.
\end{equation*}
The symbol $\lesssim$ denotes an inequality up to a universal constant.
\end{theorem}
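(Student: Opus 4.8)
The plan is to push everything to the Fourier side, where Proposition~\ref{prop:blup} makes the posterior mean explicit, and then to recognize $n^{2\nu_0}\EE(\mathrm{ISE}_n)$ as a Riemann sum of $\phi_0\,\vartheta_{\nu;\nu_0}$. First, by Parseval's identity and~\eqref{eq:blup}, writing $c_j := c_j(\xi)$ for the (random) Fourier coefficients of $\xi$ and grouping the modes according to the residue $r\in\{0,\dots,n-1\}$ of $j$ modulo~$n$,
\[
\mathrm{ISE}_n\left(\nu,\,\alpha;\,\xi\right) = \sum_{r=0}^{n-1}\ \sum_{j\in r+n\Zm}\abs*{c_j - \frac{S_r}{B_r}\,\underline{c}_j(\theta)}^2,\qquad S_r := \sum_{j\in r+n\Zm}c_j,\quad B_r := \sum_{j\in r+n\Zm}\underline{c}_j(\theta),
\]
since the index set $r+n\Zm$ depends on $j$ only through~$r$. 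From~\eqref{eq:xi} the $c_j$ are centered with $\EE\left(\overline{c_k}\,c_j\right) = \underline{c}_j(\theta_0)$ when $j=k$ and $0$ otherwise (for $j\ge 1$ they are independent and proper, $c_0$ is real, and $c_{-j}=\overline{c_j}$ contributes nothing since $\EE(\overline{c_j}^2)=0$). Expanding the square and taking expectations then gives the exact identity
\[
\EE\left(\mathrm{ISE}_n\left(\nu,\,\alpha;\,\xi\right)\right) = \sum_{r=0}^{n-1}\left(A_r - \frac{2\,C_r}{B_r} + \frac{A_r\,D_r}{B_r^2}\right),
\]
where $A_r := \sum_{j\in r+n\Zm}\underline{c}_j(\theta_0)$, $C_r := \sum_{j\in r+n\Zm}\underline{c}_j(\theta_0)\underline{c}_j(\theta)$ and $D_r := \sum_{j\in r+n\Zm}\underline{c}_j(\theta)^2$; every summand is nonnegative, $A_{n-r}=A_r$, $B_{n-r}=B_r$, etc., and all factors of $\phi$ cancel (as they must, since $\mathrm{ISE}_n$ does not depend on $\phi$).

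Next I would compute the scaling limit of a single residue class. Writing $j = nm+r$ gives $\alpha^2+j^2 = n^2\bigl((m+r/n)^2 + \alpha^2/n^2\bigr)$, so
\[
\sum_{j\in r+n\Zm}(\alpha^2+j^2)^{-\sigma} = n^{-2\sigma}\sum_{m\in\Zm}\Bigl((m+r/n)^2 + \tfrac{\alpha^2}{n^2}\Bigr)^{-\sigma} = n^{-2\sigma}\bigl(\gamma(2\sigma;\,r/n) + o(1)\bigr),
\]
the $o(1)$ being uniform for $r/n$ in a compact subset of $(0,\,1)$ by dominated convergence; likewise the mixed product $(\alpha_0^2+j^2)^{-(\nu_0+1/2)}(\alpha^2+j^2)^{-(\nu+1/2)}$ entering $C_r$ rescales to $n^{-(2\nu_0+2\nu+2)}\bigl(\gamma(2\nu_0+2\nu+2;\,r/n)+o(1)\bigr)$. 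Applying this to $A_r$, $B_r$, $C_r$, $D_r$ and collecting powers of~$n$ shows that, as $n\to\infty$ with $r/n\to x\in(0,\,1)$,
\[
n^{2\nu_0+1}\left(A_r - \frac{2C_r}{B_r} + \frac{A_rD_r}{B_r^2}\right)\ \longrightarrow\ \phi_0\,\vartheta_{\nu;\nu_0}(x).
\]
Consequently $n^{2\nu_0}\EE(\mathrm{ISE}_n) = \tfrac1n\sum_{r=0}^{n-1}n^{2\nu_0+1}\bigl(A_r-2C_r/B_r+A_rD_r/B_r^2\bigr)$ is, away from the endpoint modes, a Riemann sum for $\phi_0\int_0^1\vartheta_{\nu;\nu_0}$.

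To turn this into the three regimes I would control the endpoint modes by a bound uniform in~$n$, of the form $n^{2\nu_0+1}\bigl(A_r-2C_r/B_r+A_rD_r/B_r^2\bigr)\lesssim \bigl(\min(r,\,n-r)/n\bigr)^{\beta}$ with $\beta := 4\nu+1-2\nu_0$, valid for $1\le r\le n-1$ (the single mode $r=0$, whose folded class contains $j=0$, being treated separately by the cancellation below and seen to be harmless). When $\nu>(\nu_0-1)/2$ we have $\beta>-1$, so $\vartheta_{\nu;\nu_0}$ is integrable; splitting the sum at $r\in[\varepsilon n,\,(1-\varepsilon)n]$, the central part converges to $\phi_0\int_\varepsilon^{1-\varepsilon}\vartheta_{\nu;\nu_0}$ by the uniform pointwise limit, while the endpoint part is $\tfrac1n\,O\bigl(\sum_{1\le r\le\varepsilon n}(r/n)^\beta\bigr) = O(\varepsilon^{\beta+1})\to 0$; letting $\varepsilon\to 0$ yields $\EE(\mathrm{ISE}_n)\sim\phi_0\int_0^1\vartheta_{\nu;\nu_0}/n^{2\nu_0}$. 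When $\nu\le(\nu_0-1)/2$ the same uniform bound controls the whole sum:
\[
n^{2\nu_0}\EE\left(\mathrm{ISE}_n\left(\nu,\,\alpha;\,\xi\right)\right)\ \lesssim\ \frac1n\sum_{1\le r\le n/2}\left(\frac rn\right)^{\beta}\ =\ n^{-1-\beta}\sum_{1\le r\le n/2}r^{\beta},
\]
which is $O(n^{-1-\beta})=O(n^{2\nu_0-4\nu-2})$ if $\beta<-1$ (i.e. $\nu<(\nu_0-1)/2$) and $O(\ln n)$ if $\beta=-1$ (i.e. $\nu=(\nu_0-1)/2$); dividing by $n^{2\nu_0}$ gives precisely the asserted bounds $n^{-(4\nu+2)}$ and $\ln(n)\,n^{-2\nu_0}$.

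The crux is the uniform endpoint bound. As the statement's Taylor expansion suggests, the leading singularities $x^{-(2\nu_0+1)}$ and $x^{2\nu-2\nu_0}$ of the three pieces of $\vartheta_{\nu;\nu_0}$ cancel exactly (this is read off from $\gamma(\sigma;x)=x^{-\sigma}+2\zeta(\sigma)+O(x^2)$ as $x\to 0$, the $j=0$ term alone producing the pole $x^{-\sigma}$), so a crude estimate of the folded series $\sum_{m}\bigl((m+x)^2+\alpha^2/n^2\bigr)^{-\sigma}$ is too lossy to recover the genuine $x^{\beta}$ size of the summand. One must instead expand this series near $x=0$ to the order at which the cancellation is exhausted, with a remainder that is uniform in both $x$ and~$n$, absorbing in particular the $O(\alpha^2/n^2)$ and $O(\alpha_0^2/n^2)$ discretization corrections (these are of order $(xn)^{-2}$ and so themselves require care for the smallest~$r$). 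This Euler--Maclaurin/Taylor analysis of the Hurwitz-zeta-type series at the boundary is where essentially all the work lies; the remaining steps are dominated convergence and elementary estimates of the resulting sums.
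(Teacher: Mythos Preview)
Your overall architecture is exactly the paper's: Parseval plus Proposition~\ref{prop:blup} give an explicit expression for $\EE(\mathrm{ISE}_n)$, which after rescaling by $n^{2\nu_0}$ is a Riemann sum for $\phi_0\int_0^1\vartheta_{\nu;\nu_0}$. Your identity $\EE(\mathrm{ISE}_n)=\sum_r\bigl(A_r-2C_r/B_r+A_rD_r/B_r^2\bigr)$ is correct and coincides with the paper's after summing over~$j$.

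Where you diverge is precisely at what you call the ``crux'', and you are making it harder than it needs to be. By summing over $j$ first you collapse the expression to $A_r-2C_r/B_r+A_rD_r/B_r^2$, whose three pieces are each of size $r^{-2\nu_0-1}$ and cancel only after a Taylor expansion of the Hurwitz-type series. The paper instead keeps the mode index $j$ and writes the per-mode expectation in the form
\[
\underline{c}_{m+jn}(\theta_0)\Bigl(1-\tfrac{\underline{c}_{m+jn}(\theta)}{B_m}\Bigr)^{2}
\;+\;
\underline{c}_{m+jn}^{\,2}(\theta)\,\frac{A_m-\underline{c}_{m+jn}(\theta_0)}{B_m^{2}},
\]
in which the cancellation you are chasing is already built into the squared factor $(1-\cdot)^2$. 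Crude two-sided bounds of the type $\lambda_{m,n}m^{2\nu+1}/n\approx 1$ then give directly $\sum_j a_{m,j,n}\lesssim n^{-2\nu_0-1}\bigl(1+(m/n)^{\beta}\bigr)$, with no Euler--Maclaurin analysis at all. (You can recover this decomposition from yours: $A_r-2C_r/B_r+A_rD_r/B_r^2=\sum_j a_j(1-b_j/B_r)^2+B_r^{-2}\sum_j b_j^2(A_r-a_j)$.) For the precise asymptotic in the third regime the paper uses the sharper approximation $c_{m+nj}(\nu,\alpha)=(1+O(m^{-2}))\lvert m+nj\rvert^{-2\nu-1}$, which yields a multiplicative $(1+O(m^{-2}))$ in front of $\vartheta_{\nu;\nu_0}(m/n)$ and lets one kill the error by splitting at $m=\sqrt{n}$ rather than at $m=\varepsilon n$.

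Two small corrections. First, your uniform bound should read $\lesssim 1+(r/n)^{\beta}$, not $(r/n)^{\beta}$: when $\beta>0$ the summand tends to a nonzero constant at the endpoint (the $2\zeta(2\nu_0+1)$ term survives), so it cannot be $O((r/n)^{\beta})$; this does not affect your argument since the constant bound is all you need there. Second, the residues $r=0$ and, for even $n$, $r=n/2$ genuinely require separate handling, because on those classes the coefficients $c_j$ are not proper complex Gaussians (one has $c_{-j}=\overline{c_j}$ collisions within the class); the paper devotes two short lemmas to this, and your one-line ``treated separately by the cancellation below'' is too optimistic.
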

This result shows that half of the smoothness is sufficient for optimal convergence rates.
However, the constant $\mathcal{C}_{\nu_0}(\nu)$ is minimized by taking $\nu = \nu_0$, as illustrated in Figure~\ref{fig:c_curve}. %
These observations are in line with the results of \citet[][Theorem 3]{stein1999interpolation} and \citet[][Corollary~5.1]{kirchner2022necessary}. %
See \secerrorfixedloc for a corresponding result on prediction error at a fixed location. %

\begin{figure}
\centering
\includegraphics[scale=1.0]{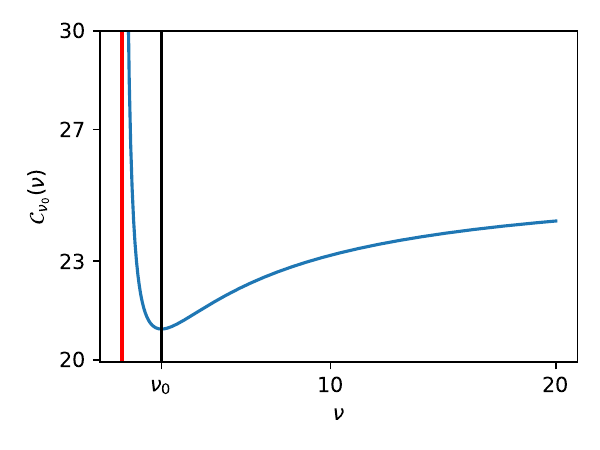}
\caption{%
Blue curve: numerical approximation of the function~$\nu \mapsto \mathcal{C}_{\nu_0}(\nu)$, for~$\nu_0 = 5/2$. %
Red vertical line:~$(\nu_0 - 1)/2$.} %
\label{fig:c_curve}
\end{figure}

Then, our last result gives the asymptotic behavior of the prediction error with estimated parameters.
\begin{theorem}\label{thm:error_nu_estimated}
Let $\nu_0 > 1/2$ and $\Theta = N \times \left(0, \, +\infty\right) \times A$ with
$N$ and $A$ compact intervals and $\nu_0 \in N$.
Then,
\begin{equation*}
n^{2\nu_0} \EE \left( \mathrm{ISE}_n \left(\hat{\nu}_n, \, \hat{\alpha}_n; \, \xi\right) \right) \to \phi_0 \mathcal{C}_{\nu_0}(\nu_0).
\end{equation*}
\end{theorem}
This last result shows that estimating the parameters
yields asymptotically the same error as if the ground truth was known.

\section{Numerical experiments}\label{sec:num_random}

The theoretical results are now illustrated by numerical experiments
in the periodic framework of Section~\ref{sec:framework}.

We pick a~$\theta_0$ and, for several values of~$n$, we sample~$\xi \sim \mathrm{GP} \left(0, \, k_{\theta_0} \right)$
to get observations~$Z = (\xi(0), \xi(1/n), \dots, \xi(1 - 1/n))\tr$.
The goal is to reconstruct~$\xi$ from~$Z$ without knowing any component of~$\theta_0$---including~$\nu_0$,
which is essential for obtaining asymptotically optimal predictions.

To assess good reconstruction of~$\xi$, we take a test grid~$(x_j)_{1 \leq j \leq N_{t}}$
on~$[0, \, 1]$ and we sample test data~$(\xi(x_1), \dots, \xi(x_{N_t}))$ jointly with~$Z$.
The test grid is sampled uniformly on~$[0, \, 1]$
so that the mean squared error
\begin{equation}\label{eq:empirical_ise}
\frac{1}{N_t} \sum_{j = 1}^{N_t} \left( \xi(x_j) - \hat{\xi}_n(x_j) \right)^2
\end{equation}
approximates~\eqref{eq:ISE}.

We consider several values of~$n$ between~$10$ and~$1000$ and~$N_t = 5000$.
The experiments are carried out for~$N_{\mathrm{rep}} = 400$ random repetitions.
(The series expansions of the covariance functions are approximated with~$10^5$ components,
and discrete Fourier transforms are used to speed up computations.)

The data are sampled with~$\theta_0 = (5/2, 10^6, 1)$.
For simplicity, we assume the parameter~$\alpha$ is fixed to a value~$\alpha_1$.
Preliminary experiments (not shown for brevity) have revealed that, as already observed
by \citet[Section~7]{korte2023smoothness}, different orders of magnitude between~$\alpha_1$ and~$\alpha_0$
result in finite-sample biases that can be slow to vanish. Consequently, the fixed value~$\alpha_1 = 1/2$
is used to bring out the efficiency of the estimators more clearly.
The smoothness parameter space~$N = [10^{-1}, 10]$ is used
and we consider~$\Theta = N \times (0, +\infty) \times \{ \alpha_1 \}$, i.e., the \emph{profile likelihood},
and several variants
of~$\Theta = N \times \{ \phi_1 \} \times \{ \alpha_1 \}$ with different values for~$\phi_1$.
The latter will be called likelihood hereafter, with specific mention of the value of~$\phi_1$.

Table~\ref{tab:nu} presents statistics of smoothness estimates. Observe that different orders
of magnitude between~$\phi_0$ and~$\phi_1$ lead to substantial biases, even for fairly large~$n$.
A look at~\eqref{eq:dev_norm} reveals that, for finite $n$, the ratio~$\phi_1/\phi_0$ can 
modify the turning point where the
quadratic form starts dominating the log-determinant.
In contrast, the profile likelihood leads to accurate estimates without requiring  prior guessing of~$\phi_0$.
Nevertheless, the most efficient estimation of~$\nu_0$ is obtained using the likelihood with known~$\phi_0$,
which is not surprising.

Figure~\ref{fig:random_experiments} shows the corresponding mean squared errors on the test data.
First, regarding the tendency with~$n$, the only substantial losses in accuracy correspond to the likelihood
with very low ratios~$\phi_1/\phi_0$. However, normalizing by the mean squared error with known
parameters shows small but significant discrepancies in the remaining cases.
Specifically, the fastest convergences to unity are
obtained using the profile likelihood and the likelihood with known~$\phi_0$.

To summarize, these experiments illustrate the practical benefits of smoothness estimation
in obtaining asymptotically optimal predictions,
as predicted by Theorem~\ref{thm:error_nu_estimated} for the profile likelihood.
In contrast, when the parameters are fixed,
known general conditions show that optimal
prediction is only possible
if~$\nu_0$ is known \citep[see, e.g.,][]{Stein_Simple_Condition, kirchner2022necessary}.
Theorem~\ref{thm:error_nu_fixed} and Figure~\ref{fig:c_curve}
show the relationship between a wrong choice of smoothness parameter and the loss
of prediction efficiency. %

\begin{table}
\caption{Summary of the smoothness estimates.
Cells show averages of~$\hat{\nu}_n$ over the~$N_{\mathrm{rep}}$
repetitions. Standard deviations are reported in parentheses.
The second column stands for the profile likelihood, whereas the subsequent columns stand
for the likelihood (with the corresponding value of~$\phi_1$ reported in the first row).}
\label{tab:nu}
\begin{tabular}{|c|c|c|c|c|c|}
\hline
$n$ & profile & $10^{-4} \phi_0$ & $10^{-3} \phi_0$ & $10^{-2} \phi_0$ & $10^{-1} \phi_0$\\
\hline
$10$ & $1.773 \ (0.417)$ & $0.364 \ (0.413)$ & $0.382 \ (0.420)$ & $0.646 \ (0.445)$ & $1.663 \ (0.238)$ \\
\hline
$20$ & $2.091 \ (0.241)$ & $0.410 \ (0.391)$ & $0.474 \ (0.396)$ & $1.089 \ (0.207)$ & $1.872 \ (0.110)$ \\
\hline
$30$ & $2.225 \ (0.189)$ & $0.431 \ (0.382)$ & $0.560 \ (0.380)$ & $1.271 \ (0.143)$ & $1.959 \ (0.076)$ \\
\hline
$40$ & $2.294 \ (0.152)$ & $0.402 \ (0.358)$ & $0.618 \ (0.342)$ & $1.382 \ (0.108)$ & $2.008 \ (0.058)$ \\
\hline
$50$ & $2.319 \ (0.130)$ & $0.442 \ (0.366)$ & $0.743 \ (0.297)$ & $1.478 \ (0.087)$ & $2.049 \ (0.045)$ \\
\hline
$100$ & $2.413 \ (0.084)$ & $0.519 \ (0.348)$ & $1.032 \ (0.182)$ & $1.674 \ (0.054)$ & $2.130 \ (0.027)$ \\
\hline
$200$ & $2.448 \ (0.057)$ & $0.643 \ (0.316)$ & $1.311 \ (0.098)$ & $1.828 \ (0.028)$ & $2.192 \ (0.015)$ \\
\hline
$300$ & $2.463 \ (0.048)$ & $0.782 \ (0.259)$ & $1.444 \ (0.070)$ & $1.894 \ (0.021)$ & $2.219 \ (0.012)$ \\
\hline
$400$ & $2.469 \ (0.040)$ & $0.873 \ (0.205)$ & $1.525 \ (0.055)$ & $1.937 \ (0.015)$ & $2.237 \ (0.009)$ \\
\hline
$500$ & $2.479 \ (0.035)$ & $0.949 \ (0.195)$ & $1.582 \ (0.043)$ & $1.965 \ (0.012)$ & $2.250 \ (0.007)$ \\
\hline
$1000$ & $2.487 \ (0.023)$ & $1.211 \ (0.111)$ & $1.728 \ (0.024)$ & $2.039 \ (0.007)$ & $2.281 \ (0.004)$ \\
\hline
\end{tabular}
\begin{tabular}{|c|c|c|c|c|c|}
\hline
$n$ & $ \phi_0$ & $10 \phi_0$ & $10^{2} \phi_0$ & $10^{3} \phi_0$ & $10^{4} \phi_0$\\
\hline
$10$ & $2.625 \ (0.232)$ & $3.501 \ (0.257)$ & $4.328 \ (0.287)$ & $5.128 \ (0.318)$ & $5.910 \ (0.347)$ \\
\hline
$20$ & $2.538 \ (0.102)$ & $3.137 \ (0.110)$ & $3.703 \ (0.120)$ & $4.254 \ (0.130)$ & $4.795 \ (0.139)$ \\
\hline
$30$ & $2.523 \ (0.070)$ & $3.031 \ (0.074)$ & $3.512 \ (0.079)$ & $3.980 \ (0.084)$ & $4.439 \ (0.089)$ \\
\hline
$40$ & $2.514 \ (0.050)$ & $2.970 \ (0.053)$ & $3.404 \ (0.057)$ & $3.825 \ (0.062)$ & $4.238 \ (0.067)$ \\
\hline
$50$ & $2.513 \ (0.042)$ & $2.934 \ (0.045)$ & $3.336 \ (0.049)$ & $3.727 \ (0.053)$ & $4.112 \ (0.057)$ \\
\hline
$100$ & $2.504 \ (0.024)$ & $2.848 \ (0.025)$ & $3.177 \ (0.027)$ & $3.497 \ (0.028)$ & $3.813 \ (0.030)$ \\
\hline
$200$ & $2.501 \ (0.013)$ & $2.789 \ (0.014)$ & $3.066 \ (0.015)$ & $3.337 \ (0.016)$ & $3.605 \ (0.017)$ \\
\hline
$300$ & $2.500 \ (0.011)$ & $2.763 \ (0.011)$ & $3.016 \ (0.012)$ & $3.265 \ (0.012)$ & $3.510 \ (0.013)$ \\
\hline
$400$ & $2.500 \ (0.008)$ & $2.748 \ (0.009)$ & $2.987 \ (0.009)$ & $3.222 \ (0.010)$ & $3.453 \ (0.010)$ \\
\hline
$500$ & $2.501 \ (0.007)$ & $2.738 \ (0.007)$ & $2.967 \ (0.007)$ & $3.192 \ (0.008)$ & $3.414 \ (0.008)$ \\
\hline
$1000$ & $2.500 \ (0.004)$ & $2.709 \ (0.004)$ & $2.911 \ (0.005)$ & $3.110 \ (0.005)$ & $3.307 \ (0.005)$ \\
\hline
\end{tabular}
\end{table}

\begin{figure}
\centering
\includegraphics[scale=1.0]{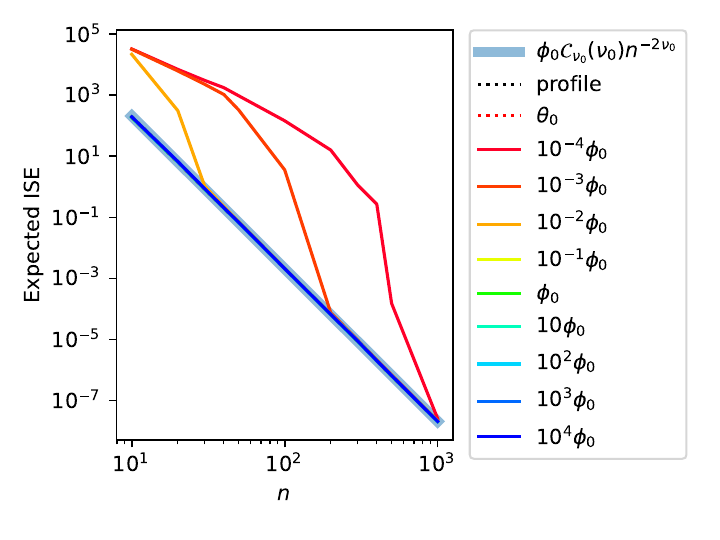}
\centering
\includegraphics[scale=1.0]{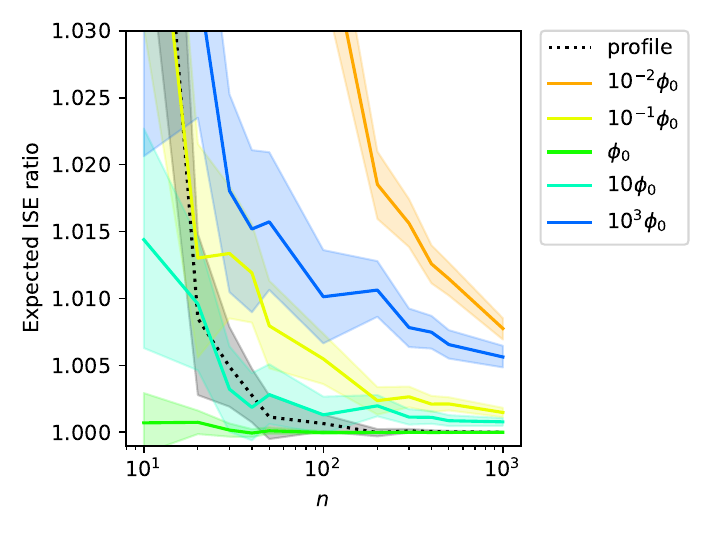}
\caption{The upper plot shows the
evolution of averages of~\eqref{eq:empirical_ise}.
The black dotted line stands for the profile likelihood, and the other thin solid colored lines
for the likelihood with the value of~$\phi_1$ indicated in the legend.
The red dotted line stands for~\eqref{eq:empirical_ise} with known~$\theta_0$.
The thick transparent line stands for the asymptotics
of~$\EE ( \mathrm{ISE}_n (\nu_0, \, \alpha_0; \, \xi) )$
predicted by Theorem~\ref{thm:error_nu_fixed}.
All but three of the thin lines mentioned above closely follow this thick line.
The lower plot shows means and~$95\%$ bootstrap confidence intervals of ratios
between averages of~\eqref{eq:empirical_ise} with
estimated and known parameters.
Results are reported for fewer values of~$\phi_1$ for clarity.} %
\label{fig:random_experiments}
\end{figure}

\section{The deterministic case}\label{sec:freq}
Let $\beta > 1/2$ and define the Sobolev space
\begin{equation*}
H^{\beta}\left[0, \, 1\right] = \left\lbrace g \in L^2 \left[0, \, 1\right], \ \ns{g}_{H^{\beta} \left[0, \, 1\right] }^2 =  \sum_{j \in \Zm} (1 + j^2)^{\beta} \abs{c_j(g)}^2 < +\infty \right\rbrace.
\end{equation*}
The usual identification with continuous representers given by the Sobolev embedding theorem makes it possible to interpret
this space as a set of continuous periodic functions.
This section studies maximum likelihood estimation with equispaced observations~\eqref{eq:obs}
from a fixed deterministic periodic function $f\colon \left[0, \, 1\right] \to \RR$ lying in a Sobolev space.
Define the (Sobolev) smoothness
$$
\nu_0(f) = \inf \left\lbrace \beta > 1/2, \ f \notin H^{\beta} \left[0, \, 1\right]  \right\rbrace
$$%
of~$f$ as \citet{karvonen2022:_asymptotic} and \citet{2022_wang:_rigde}.
We will assume that $\nu_0(f) \in \left(1, \, +\infty\right)$.
The restriction~$\nu_0(f) > 1$ is imposed for convenience as it
ensures that~$f$ has absolutely summable Fourier coefficients.
\secproofdeterministic contains the proofs for this section.

For quasi-uniform observations on ``nice'' bounded regions of~$\RR^d$, \citet{karvonen2022:_asymptotic}
shows that $\liminf \hat{\nu}_n \geq \nu_0(f)$ if~$\alpha$ and~$\phi$ are fixed.
Karvonen also shows that~$\hat{\nu}_n \to \nu_0(f)$ for a class of compactly supported
self-similar functions.
It is not hard to check that~$\nu_0(\xi) = \nu$ holds almost surely for any Matérn process with regularity parameter~$\nu$.
With that in mind, one can interpret the previous results the following way.
Maximum likelihood estimation chooses the parameter~$\nu$
so that the sample paths are asymptotically smoother than~$f$ and,
under more assumptions, so that the (Sobolev) smoothnesses match.
Interestingly, the proof is based on results similar to~\eqref{eq:dev_norm},
established by proceeding like~\citet{korte2023smoothness}.
A sketch is briefly provided with the notations of the framework from Section~\ref{sec:framework}.
The log-determinant is
studied using bounds with matching rates for conditional variance.
Then, for $\nu > \nu_0(f) - 1/2$, the uniform inequality
\begin{equation}\label{eq:approx_norm_det}
Z\tr K_{\theta}^{-1} Z \lesssim  \phi^{-1} n^{1 + 2(\nu - \nu_0(f))}%
\end{equation}%
is
(essentially)
shown. However, establishing (sufficient results slightly weaker than)
the reverse inequality requires additional assumptions
on~$f$, such as membership in a class of functions with self-similar spectra
(see \citealt[Definition~3.1]{karvonen2022:_asymptotic} and also \citealt[p.~1398]{szabo2015frequentist},
in the context of the inverse signal-in-white-noise model).
For the present purposes, it suffices to consider the ``prototypical'' subclass 
\citep[see][p. 14]{karvonen2022:_asymptotic} of functions~$f$ such that
\begin{equation}\label{eq:spectrum_sandwich}
C_1 \left| j \right|^{-\nu_0(f) - 1/2} \leq \left| c_j(f) \right| \leq C_2 \left| j \right|^{-\nu_0(f) - 1/2} \quad \mathrm{when} \ \abs{j} \geq N,
\end{equation}%
for some $N \geq 0$ and $C_2 \geq C_1 > 0$.
This notation is compatible with the definition of~$\nu_0(f)$.
For instance, the periodic function~$g$ which is symmetric with respect to zero and such that~$g(x) = 4 \pi^2 x^2$, for~$x \in [0, \, 1/2]$, has Fourier coefficients
\begin{equation}
  \left\{\begin{array}{rl}
    c_j(g) & = 2 (-1)^{\lvert j \lvert} j^{-2} \ \mathrm{for \ non\mbox{-}zero} \ j,\\
    c_0(g) & = \pi^2/3,
  \end{array}\right.\,
\end{equation}
and therefore satisfies~\eqref{eq:spectrum_sandwich} with~$\nu_0(f) = 3/2$. See also the function in Figure~\ref{fig:exp_deter} for another example.

As in previous works, the following property holds for the class~\eqref{eq:spectrum_sandwich} of functions with well-behaved spectra.
\begin{prop}\label{prop:convergence_det_nll}
Let $\Theta = N \times F \times A$ with
$N$, $F$, and $A$ compact intervals and~$N$ containing~$\nu_0(f) \in (1, +\infty)$.
Assume that~$f$ satisfies~\eqref{eq:spectrum_sandwich}.
Then, the convergence $\hat{\nu}_n \to \nu_0(f)$ holds.
\end{prop}
Having~$\phi$ and~$\alpha$ estimated on compact intervals jointly with~$\nu$ is somewhat anecdotal,
so nothing is new in this result. The details of the proof sketched in the previous paragraph
are therefore omitted.
However, since the proof roughly follows the lines from Section~\ref{sec:chen},
the observation from Section~\ref{sec:contrib} applies also in this setting.
Beforehand, the following preliminary step is required.
\begin{prop}\label{prop:excursion}
Suppose that~$\nu_0(f) \in (1, + \infty)$ and
let $\Theta = N \times \left(0, \, +\infty\right) \times A$ with
$N$ and $A$ compact intervals and $\max N \geq \nu_0(f) - 1/2$.
Then, it holds that $\liminf \hat{\nu}_n \geq \nu_0(f) - 1/2$.
\end{prop}

\begin{remark}\label{rem:escape_extension}
Inspection of the proof of Proposition~\ref{prop:excursion}
reveals that it is also valid for quasi-uniform observations on
nice bounded open regions of~$\mathbb{R}^d$,
using tools from \citet[Proposition~3.6--3.7]{karvonen2022:_asymptotic}.
The resulting asymptotic lower bound is~$\nu_0(f) - d/2$.
\end{remark}

Note the difference with the previous~$\liminf \hat{\nu}_n \geq \nu_0(f)$ for fixed~$\phi$.
A smoothness estimate larger than~$\nu_0(f)$ means that~$f$ is
rougher than the sample paths. The weaker inequality~$\hat{\nu}_n \geq \nu_0(f) - 1/2$ only
means that the function~$f$ is rougher than the elements of the reproducing kernel Hilbert space.
A~computation similar to~\eqref{eq:profiled_computation} shows that the behavior
above~$\nu_0(f) - 1/2$ is, roughly speaking, governed by~$\Ocal(1)$-terms.
It is possible to give a quantitative description of what happens for a class smaller than~\eqref{eq:spectrum_sandwich}.
For~$\nu > \nu_0(f) - 1/2$ and~$\alpha > 0$, define
$$
\Mset_n^f \left(\nu, \alpha \right) =
\inf_{\phi > 0} \Lset_n \left( \nu, \phi, \alpha \right)+ 2\nu_0(f) \ln(n) - 1.
$$
\begin{prop}\label{prop:dev_nll_misp}
Suppose that $\nu_0(f) \in (1, +\infty)$ and
\begin{equation}\label{eq:small_class}
c_j(f) = \left(1 + \Ocal ( \left| j \right|^{-1} ) \right) \left| j \right|^{-\nu_0(f) - 1/2}
\end{equation}
for nonzero~$j$.
Then, we have~$\Mset_n^f \left(\nu, \, \alpha \right) \to \Mset_{\infty}^f \left(\nu\right)$
uniformly on compact subsets of $\left(\nu_0(f) - 1/2, \, + \infty \right) \times \left(0, \, + \infty \right)$, where
\begin{equation}\label{eq:dev_nll_misp}
\Mset_{\infty}^f \left(\nu\right) =
\int_0^1 \ln \left( \gamma \left(2 \nu  + 1; \cdot \right) \right)
+ \ln \left(
\int_0^1 \frac{ \gamma^2 \left(\nu_0(f) + 1/2; \cdot\right) }{\gamma \left(2\nu + 1; \cdot\right) } \right).
\end{equation}
\end{prop}
We could not identify the minimizer(s) of the limit analytically.
Figure~\ref{fig:criterion} shows a numerical approximation of $\Mset_{\infty}^f$.

After inspection of the proof of Proposition~\ref{prop:dev_nll_misp}, it does not
seem obvious to exhibit a function~$f$ such that~$\hat{\nu}_n \to \nu_0(f)$ holds when the amplitude parameter~$\phi \in \left(0, \, +\infty\right)$
is jointly estimated. However, Theorem~\ref{thm:consistency_nu} shows that the set of such functions
has probability one under a Matérn process with regularity~$\nu_0$ belonging to~$N$.

\begin{figure}
\centering
\includegraphics[scale=0.90]{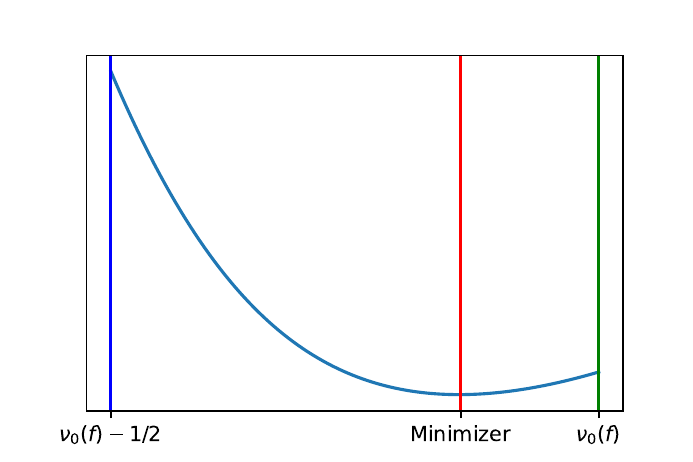}
\caption{
Numerical approximation of the function $\Mset_{\infty}^f$, for $\nu_0(f) = 3/2$.
A numerical approximation
of the minimizer is about
$1.354$.}
\label{fig:criterion}
\end{figure}

The previous results are illustrated numerically.
We consider a periodic function~$f = c B_{2m} + g$, where~$B_{2m}$
is a Bernoulli polynomial, $c$ is a constant, $m$ is an integer, and~$g$
is a band-limited periodic function.
The Fourier coefficients of Bernoulli polynomials satisfy~$c_j(B_{2m}) \propto j^{-2m}$,
for nonzero~$j$ \citep{abramowitz1968handbook}.
Thus, we have~$\nu_0(f) = 2m - 1/2$, and by choosing an appropriate value for~$c$,
the function $f$ satisfies~\eqref{eq:small_class}.
Specifically, we choose~$m = 1$ and an arbitrary function~$g$ with ten frequencies.
Figure~\ref{fig:exp_deter} illustrates the resulting~$f$, for which~$\nu_0(f) = 3/2$.

As in Section~\ref{sec:num_random}, we consider the profile likelihood and the likelihood
with several fixed values~$\phi_1$. The smoothness parameter space is
again~$N = [10^{-1}, \, 10]$ and the parameter~$\alpha$ is fixed to one.
We evaluate several sample sizes~$n$, ranging from~$10$ to~$10^6$.
To speed up computations, we use discrete Fourier transforms and
finite approximations of covariance function expansions.

The behavior of the smoothness estimates is shown in Figure~\ref{fig:exp_deter}.
The estimators seem to converge to the limits predicted by Proposition~\ref{prop:convergence_det_nll}
and Proposition~\ref{prop:dev_nll_misp}. However, convergence is quite slow (especially
for the likelihood with certain fixed values~$\phi_1$). %
Nevertheless, it appears clearly that the profile likelihood does not fit~$\nu$
so that~$f$ and the sample paths have the same Sobolev smoothness.

\begin{figure}
\centering
\includegraphics[scale=1.0]{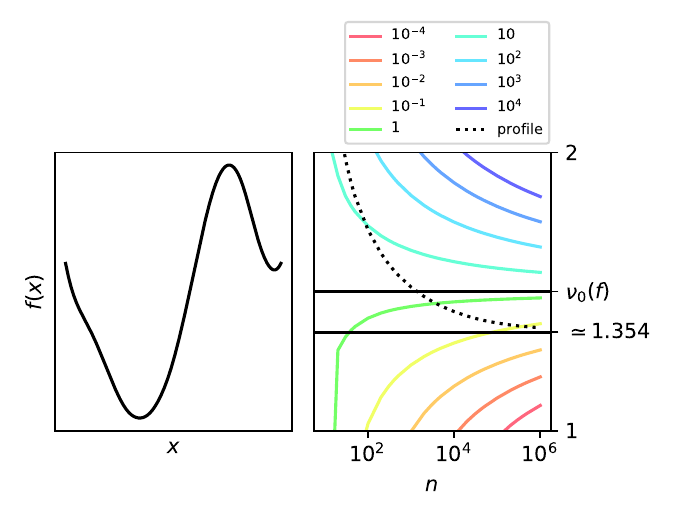}
\caption{Left: the function~$f$. Right: smoothness estimates as functions of~$n$.
The black dotted line stands for the profile likelihood, and the other solid colored lines
for the likelihood with the fixed value~$\phi_1$ indicated in the legend.
The lowest black horizontal line represents the approximate minimizer of~$\Mset_{\infty}^f$
shown in Figure~\ref{fig:criterion}.}
\label{fig:exp_deter}
\end{figure}

\section{Conclusion and perspectives}\label{sec:concl}

The joint maximum likelihood estimation of the
regularity and the amplitude parameters was studied theoretically for
a Mat\'ern model on the circle with equispaced observations. More precisely, strong consistency
and asymptotic normality results were established. As a consequence,
it was also shown that maximum likelihood estimation yields asymptotically the same
error as if the ground truth parameter was known. To our knowledge, this is the first rigorous
result of this kind for Gaussian process interpolation using a Matérn covariance function with
unknown smoothness parameter.

We also examined the scenario involving observations of a function from a continuous Sobolev space.
Typical results indicate that, under certain spectral assumptions,
maximum likelihood estimation of the regularity parameter alone
asymptotically finds the Sobolev smoothness of the target function.
It was shown that such a phenomenon does not appear when~$\nu$ is estimated
jointly with~$\phi$, i.e., using the profile likelihood.

The main limitation of these results is that they deal with a
limited applicability model. However, given the similarity
between periodic Matérn kernels and those on Euclidean domains,
it is reasonable to conjecture that similar results are also valid for the latter.

Section~\ref{sec:prior_works} shows that one proof strategy would be to
derive sharp estimates of tails of covariance matrix spectra,
which was made possible by the restrictive assumptions of the present article.
To our knowledge, no available result is sufficient to prove, for instance,
an analogous statement
of Lemma~\lemlogdet in a more applicable setting.
Multiplicative bounds for empirical approximation of spectra with
random points \citep{braun2006accurate} are a relevant approach.
However, we need to find asymptotic equivalents rather than upper bounds.

Another proof strategy would be to use scattered data approximation results for
quasi-uniform designs, as done by \citet{karvonen2022:_asymptotic},
who, for instance, studies the log-determinant using bounds with matching
rates for conditional variance.
The proof of Proposition~\ref{prop:excursion} can be adapted with these tools, as mentioned in Remark~\ref{rem:escape_extension}.
However, a refined analysis is required to adapt the other results of the present article.
For instance,
pursuing this argument to prove an analogous result of
Lemma~\lemlogdet would require finding an asymptotic equivalent of the conditional variance,
instead of bounds with matching rates.
However, the sampling inequalities 
\citep[e.g.,][]{narcowich2005:_sobolev_bounds, arcangeli2007:_error_bounds}
used to derive the upper bounds are notoriously challenging to prove.

\clearpage
\appendix
\section{Asymptotic analysis of prediction error at a fixed location}\label{sec:pred_fixed_loc}

Theorem~\thmerrornufixed considers the integrated squared prediction error on~$[0, \, 1]$
in the framework introduced in Section~\secframework, i.e.
with~$n$ equispaced observation locations~$\{j/n, \, 0 \leq j \leq n - 1\}$.
Asymptotic results on prediction error are often formulated for a fixed single location.
The closed-form expression for leave-one-out
prediction~\citep{Craven1978SMOOTHINGND, Dubrule1983CrossVO}
is a convenient way of giving a corresponding version of Theorem~\thmerrornufixed for predicting~$\xi(0)$.
\begin{theorem}\label{thm:error_nu_fixed_zero}
Let $\left(\nu, \, \alpha\right) \in \left(0, \, +\infty\right)^2$.
For~$n \geq 1$, let~$\hat{\xi}_n^{(0)}$ be the kriging predictor
given~$\{\xi(j/n), \, 1 \leq j \leq n - 1\}$ and~$(\nu, \, \alpha)$.
Then,
\begin{equation*}
\EE \left( \left( \hat{\xi}_n^{(0)}(0) - \xi(0) \right)^2 \right) \lesssim \frac{1}{n^{4 \nu + 2}}, \ \mathrm{for} \ \nu < (\nu_0 - 1)/2,
\end{equation*}
\
\begin{equation*}
\EE \left( \left( \hat{\xi}_n^{(0)}(0) - \xi(0) \right)^2 \right) \lesssim \frac{\ln(n)}{n^{2\nu_0}}, \ \mathrm{for} \ \nu = (\nu_0 - 1)/2,
\end{equation*}
and
\begin{equation*}
n^{2\nu_0} \EE \left( \left( \hat{\xi}_n^{(0)}(0) - \xi(0) \right)^2 \right) \to \phi_0 \mathcal{C}_{\nu_0}^{(0)}(\nu), \ \mathrm{otherwise},
\end{equation*}
where
$$
\mathcal{C}_{\nu_0}^{(0)}(\nu) = \frac{
\int_0^1 \gamma \left( 2\nu_0 + 1; \, \cdot \right) / \gamma^2 \left( 2\nu + 1; \, \cdot \right)
}{
\left( \int_0^1 \gamma^{-1} \left( 2\nu + 1; \, \cdot \right) \right)^2
}.
$$
\end{theorem}%
The proof is deferred to~\secproofserrorsuppmat.
Figure~\ref{fig:c_zero_curve} illustrates how~$\mathcal{C}_{\nu_0}^{(0)}(\nu)$ varies with~$\nu$. %

\begin{figure}
\centering
\includegraphics[scale=1.0]{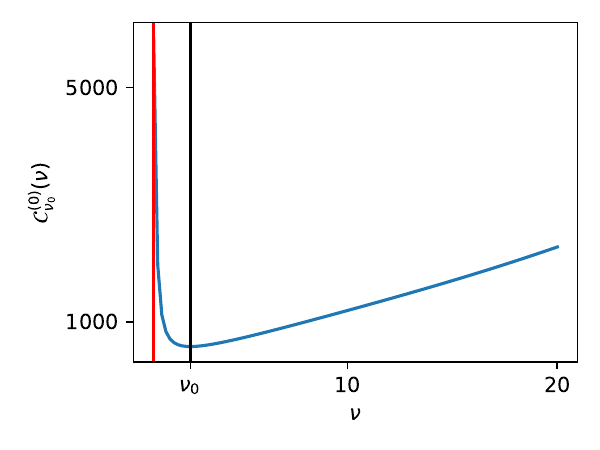}
\caption{
Blue curve: numerical approximation of the function~$\nu \mapsto \mathcal{C}_{\nu_0}^{(0)}(\nu)$, for~$\nu_0 = 5/2$. %
Red vertical line:~$(\nu_0 - 1)/2$.} %
\label{fig:c_zero_curve}
\end{figure}

\section{Proofs}\label{app:proofs}

\subsection{Notations}\label{sec:notations}

The symbol $\lesssim$ denotes an inequality up to a universal constant.
For compactness, the symbol $\approx$ is used when the two-way inequality $\lesssim$ holds.

Write $K_{\theta} = \phi R_{\nu, \alpha}$ and $\underline{c}_j(\theta) = \phi c_j (\nu, \alpha)$,
for $\theta = \left(\nu, \, \phi, \alpha \right) \in \left(0, \, + \infty \right)^3$ and $j \in \Zm$.
All results suppose that $\Theta = N \times \left(0, \, +\infty\right) \times A$
with $N = \left[\nu_{\min}, \, \nu_{\max}\right]$, $A = \left[\alpha_{\min}, \, \alpha_{\max} \right]$,
$0 < \nu_{\min} < \nu_0 < \nu_{\max} < + \infty$, and $0 < \alpha_{\min} \leq \alpha_{\max} < + \infty$ unless explicitly stated otherwise.
The proofs sometimes take care to ensure that~$\hat{\nu}_n$ is asymptotically
larger than some lower bounds. These steps can be ignored if the bound is
less than zero.
We define $N_{\epsilon} = \left[\nu_0 - 1/2 + \epsilon, \, +\infty \right) \cap N$
for~$\epsilon > 0$.
The notation $l = \floor*{(n-1) / 2}$ will often be used throughout the following.

\subsection{Circulant matrices and useful facts}\label{sec:circulant}

The framework introduced in Section~\ref{sec:framework} is convenient
for analyzing kernel-based regression methods
\cite[see, e.g.,][]{Craven1978SMOOTHINGND}.
This section reviews the properties needed for our purposes.

Let $W$ be the $n \times n$ matrix with entries $W_{j, m} = n^{-1/2} e^{ 2\pi i j m / n}$, for $0 \leq j, m \leq n-1$.
For every $\theta = (\nu, \phi, \alpha) \in (0, +\infty)^3$, the periodicity of~$k_\theta$
implies that
$$
K_\theta =
\begin{pmatrix}
k_{\theta} \left(0\right) \, & k_{\theta} \left( \frac{1}{n}\right) \, & \dots \, & k_{\theta} \left( \frac{n-1}{n}\right) \\[6pt]
k_{\theta} \left( \frac{n-1}{n}\right)  & \, k_{\theta} \left(0\right) & \, \dots & \, k_{\theta} \left( \frac{n-2}{n}\right)\\[6pt]
\dots  & \, \dots  & \, \dots & \, \dots \\[6pt]
k_{\theta} \left( \frac{1}{n}\right)  & \, k_{\theta} \left( \frac{2}{n}\right) & \, \dots & k_{\theta} \left(0\right) 
\end{pmatrix}
$$%
is a circulant matrix
and so is $R_{\nu, \alpha}$.
Consequently \citep[see, e.g.,][p. 130]{brockwell1987time}, it holds that
$R_{\nu, \alpha} = W \Delta_{\nu, \alpha} W^{\ast}$
with
$\Delta_{\nu, \alpha} = \mathrm{diag} \left( \lambda_{0, n}, \dots, \lambda_{n-1, n} \right)$
and
\begin{equation}\label{eq:eigen_values}
\lambda_{m, n} = \sum_{j = 0}^{n-1} e^{-2\pi i j m / n} k_{\nu, 1, \alpha}(j / n) = n \sum_{j \in \Zm} c_{m + n j}(\nu, \, \alpha), \ 0 \leq m \leq n - 1.
\end{equation}
Note that $\lambda_{m, n}$ depends on $\nu$ and $\alpha$
but the symbols are dropped to avoid cumbersome expressions.
These coefficients verify
\begin{equation}\label{eq:eigen_sym}
\lambda_{m, n} = \lambda_{n - m, n}, \ \mathrm{for} \ 1 \leq m \leq n - 1.
\end{equation}
The eigenvalue $\lambda_{0, n}$ is simple and there are~$l$ pairs
$\left( \lambda_{m, n} , \, \lambda_{n - m, n}\right)$,
for~$m \in \llbracket 1, \, l\rrbracket$, where~$l$ is
the shortcut defined in Appendix~\ref{sec:notations}. If $n$ is
even, then the eigenvalue~$\lambda_{n/2, n}$ is also simple.

Furthermore, combining each pair of eigenvectors of $W$
shows that
$R_{\nu, \alpha} = P \Delta_{\nu, \alpha}  P\tr$
for a unitary matrix~$P$ written using sines and cosines functions.
Then, with $\theta_0 = \left(\nu_0, \phi_0, \alpha_0\right)$ the ground truth
introduced in Section~\ref{sec:assumptions}, write
$$P\tr Z = \sqrt{\phi_0} \left(\sqrt{\lambda_{0, n}^{(0)}} U_{0, n}, \dots, \sqrt{\lambda_{n-1, n}^{(0)}} U_{n-1, n} \right),$$%
with $\lambda_{0, n}^{(0)}, \dots, \lambda_{n-1, n}^{(0)}$
the eigenvalues of $R_{\nu_0, \alpha_0}$
and $U_{0, n}, \dots, U_{n - 1, n}$ drawn independently from a standard Gaussian. We have
\begin{equation*}
Z\tr R_{\nu, \alpha}^{-1} Z
= \phi_0 \sum_{m = 0}^{n - 1} \frac{U_{m, n}^2 \lambda_{m, n}^{(0)} }{\lambda_{m, n}}.
\end{equation*}
Our strategy to analyze this kind of expression
will often consist of: 1) studying the sum
for~$m \in \llbracket 1, \, l\rrbracket$; 2) using the equality~\eqref{eq:eigen_sym};
and 3) treating the remaining terms for~$m = 0$ and possibly~$m = n/2$
separately.

The following approximation discussed in Section~\ref{sec:chen} will sometimes be used.
\begin{lem}\label{lem:rough_bound_c}
One has $n^{-1} \lambda_{0, n} \approx c_{0}(\nu, \alpha) \approx 1$ and $n^{-1} \lambda_{m, n} \approx c_{m}(\nu, \alpha) \approx m^{-2\nu - 1}$ uniformly in
$\nu \in N$, $\alpha \in A$, $n$ and $1 \leq m \leq \floor*{n / 2}$.
\end{lem}
\begin{proof}
Let $0 \leq m \leq \floor*{n / 2}$,
we have
using \eqref{eq:eigen_values}
\begin{equation*}\label{eq:bounds_eigen_log_det}
c_{m}(\nu, \alpha) \leq \lambda_{m, n} / n  \leq 2c_{m}(\nu, \alpha) + 2\sum_{j = 1}^{+\infty} c_{m + n j}(\nu, \alpha).
\end{equation*}
Moreover
\begin{equation*}
\sum_{j = 1}^{+\infty} c_{m + n j}(\nu, \alpha) / c_{m}(\nu, \alpha)
\leq \sum_{j = 1}^{+\infty} (\alpha_{\max}^2 + 1/4)^{\nu + 1/2} / j^{2\nu + 1}
\lesssim 1,
\end{equation*}
uniformly using the monotonicity of the zeta function.
This shows $n^{-1} \lambda_{m, n} \approx c_{m}(\nu, \alpha)$
and finishing the proof makes no difficulty.
\end{proof}
Nevertheless, our results will require refined approximations, as explained in Section~\ref{sec:contrib}.

\subsection{More notations and properties}\label{sec:more_notations}

For each $n$, it is straightforward to prove that the~$\lambda_{m, n}$s
are smooth functions of $\left( \nu, \, \alpha\right) \in \left(0, \, +\infty\right)^2$
by bounding the derivatives of the $c_j$s
uniformly on compacta
(up to third-order derivatives suffice for our purposes). 
Using the formulas from Appendix~\ref{sec:circulant} then shows
that $\Lset_n$ is also smooth for any realization. 

Furthermore, define:
$$
\Mset_n\colon \left(\nu, \alpha \right) \in N \times A \mapsto
\inf_{\phi > 0} \Lset_n \left( \nu, \phi, \alpha \right)+ 2\nu_0 \ln(n) - \ln \left( \phi_0 \right) - 1,
$$
with $\nu_0$ the ground truth introduced in Section~\ref{sec:assumptions}.
Its expression is given by Proposition~\ref{prop:profile_lik}
so it is a stochastic process which is smooth on the almost sure event $Z \neq 0$.
The proofs mostly consist in studying~$\Mset_n$.

For a compact interval $A \subset \left(0, \, +\infty \right)$, define now
$$\Uset_n\colon \nu \in N \mapsto \inf_{\alpha \in A} \Mset_n \left(\nu, \alpha \right).$$
The object~$\Uset_n$ is a stochastic process since the infima
can be replaced by countable ones.
Its almost sure continuity follows from the almost sure
smoothness of~$\Mset_n$ and the compacity of~$A$.

Also, write
$g_\nu = \ln \left( \gamma \left(2\nu + 1; \, \cdot\right) \right)$
for $\nu > 0$ and
$$h_{\nu;\nu_0} = \frac{\gamma\left(2\nu_0 + 1; \,  \cdot\right)}{\gamma  \left(2\nu + 1; \, \cdot\right)}$$
for $\nu > \nu_0 - 1/2$.
These functions are smooth and integrable
and we will write
$$
H\colon \nu \in \left( \nu_0 - 1/2, \, +\infty\right)  \mapsto \int_0^1 h_{\nu;\nu_0},
\quad
G\colon \nu \in \left( 0, \, +\infty\right)  \mapsto \int_0^1 g_{\nu},
$$
and $\Uset\colon  \nu \in \left( \nu_0 - 1/2, \, +\infty\right) \mapsto  G(\nu) + \ln\left( H (\nu) \right)$.
The smoothness of these functions is ensured by
dominated convergence arguments (three derivatives suffice for our purposes).

\subsection{Proofs of Section~\ref{sec:framework}}\label{sec:proofs_framework}

\begin{proof}[Proof of Proposition~\ref{prop:blup}]
For $x \in \left[0, \, 1\right]$, the kriging equations yield
$\hat{f}_n(x) = k_{\theta, \, x }\tr \, K_{\theta}^{-1} Z$,
with $k_{\theta, \, x} = \left( k_{\theta} \left(  m/n - x \right) \right)_{0 \leq m \leq n -1}$.
The assumptions guarantee that $f$
equals the limit of its Fourier series everywhere.
Then, using the matrix~$W$ defined in Appendix~\ref{sec:circulant},
it is straightforward to show that
\begin{equation}\label{eq:estimation_coef_f}
W^{\ast} Z = \sqrt{n} \left( \sum_{j \in m + n\Zm}  c_j(f) \right)_{0 \leq m \leq n - 1}
\end{equation}
and
$$
W^{\ast} k_{\theta, \, x}  = \sqrt{n} \left( \sum_{j \in m + n\Zm}  \underline{c}_j( \theta) e^{- 2 \pi i x j } \right)_{0 \leq m \leq n - 1},
$$
where the sums converge absolutely.
Then, the uniform absolute-convergence of~\eqref{eq:blup}
follows from elementary manipulations.
\end{proof}

\subsection{Proof of Theorem~\ref{thm:consistency_nu}}\label{sec:proof_consistency}

\subsubsection{Proof of the theorem}

\begin{proof}[Proof of Theorem~\ref{thm:consistency_nu}]
For $0 < \epsilon < 1/2$,
the sequence~$\Uset_n$ converges almost surely uniformly to~$\Uset$
on~$N_{\epsilon}$
by Lemma~\ref{lem:dev_pnll_full}.
Also, the function~$\Uset$
is continuous and strictly minimized by taking~$\nu = \nu_0$ thanks to Jensen inequality.

The rest of the proof is dedicated to showing that~$\liminf \hat{\nu}_n \geq \nu_0 - 1/2 + \epsilon$
for some~$\epsilon > 0$.
First for $\nu \in N$ and $\alpha \in A$, we have
\begin{align*}
\Mset_n(\nu, \alpha)
& \, = \,
G(\nu) + \mathcal{O} \left( \ln(n) / n \right) + \ln \left( \frac{\phi_0^{-1} Z\tr R_{\nu, \alpha}^{-1} Z}{n^{1 + 2(\nu - \nu_0)}} \right)\\
& \, = \,
\mathcal{O} \left(1 \right) + \ln \left( \frac{\phi_0^{-1} Z\tr R_{\nu, \alpha}^{-1} Z}{n^{1 + 2(\nu - \nu_0)}} \right)
\end{align*}
uniformly in $\nu \in N$ and $\alpha \in A$ thanks to Lemma~\ref{lem:log_det} and the continuity of~$G$.

Now, let $0 < \epsilon < 1/4$, $\nu \in N \setminus N_{\epsilon} = \left[\nu_{\min}, \, \nu_0 - 1/2 + \epsilon \right)$ and~$\alpha \in A$.
It holds that:
\begin{align*}
\frac{\phi_0^{-1} Z\tr R_{\nu, \alpha}^{-1} Z}{n^{1 + 2(\nu - \nu_0)}} 
& \, \geq \, \frac{C}{n} \sum_{m = 1}^{n-1} U_{m, n}^2 \min \left( \frac{m}{n}, 1 - \frac{m}{n} \right)^{2(\nu - \nu_0)} \\ & \qquad \qquad \left(C > 0, \ \mathrm{by \ Lemma~}\ref{lem:rough_bound_c} \ \mathrm{and} \ \eqref{eq:eigen_sym} \right) \\
& \, \geq \, \frac{C}{n} \sum_{m = 1}^{n-1} U_{m, n}^2 \min\left( \frac{m}{n}, 1 - \frac{m}{n} \right)^{- 1 + 2\epsilon} \\ & \qquad \qquad \left(\nu \leq \nu_0 - 1/2 + \epsilon\right) \\
& \, = \,
o(1) +
\frac{C}{n} \sum_{m = 1}^{n-1} \min\left( \frac{m}{n}, 1 - \frac{m}{n} \right)^{- 1 + 2\epsilon}
 \\ & \qquad \qquad \left(\mathrm{a.s., \ using \ Lemma~}\ref{lem:riemann_sum_bound}\right)\\
& \, \to \, \frac{C}{2^{2\epsilon} \epsilon}.
\end{align*}%
Lemma~\ref{lem:dev_pnll_full} gives $\Uset_n \left( \nu_0 \right) \to \Uset \left( \nu_0 \right)$ almost surely,
so we have
\begin{align*}
\inf_{\nu \in N \setminus N_{\epsilon} } \Uset_n \left( \nu \right) - \Uset_n \left( \nu_0 \right)
& \, = \,
\inf_{\nu \in N \setminus N_{\epsilon} , \alpha \in A} \Mset_n(\nu, \alpha) - \Uset_n \left( \nu_0 \right)
\\
& \, \geq \, \Ocal(1) + \ln \left( C \right) - \ln \left( 2^{2\epsilon} \epsilon \right) - \Uset\left( \nu_0 \right) + o \left( 1 \right).
\end{align*}%
Letting $\epsilon \to 0$ shows that the expression in display can be made almost surely ultimately strictly positive.
\end{proof}

\subsubsection{Approximating $\ln(\det(R_{\nu, \alpha}))$}

\begin{lem}\label{lem:bound_c_rho}
Let $\nu \in N$, $\alpha \in A$, $1 \leq m \leq \floor*{n/2}$, and $j \in \Zm$. We have:
\begin{equation}\label{eq:bound_c_h}
c_{m + n j}(\nu, \alpha) = 
\frac{1 + u_{n, m, j}(\nu, \alpha)}{ \abs{j n  + m}^{2 \nu + 1}},
\end{equation}
with $- 1 < v_m \leq u_{n, m, j}(\nu, \alpha) \leq 0$ and $v_m = \Ocal(m^{-2})$.
\end{lem}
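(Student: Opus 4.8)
The plan is to produce $u_{n,m,j}(\nu,\alpha)$ in closed form by pulling out the leading power. Since $c_{m+nj}(\nu,\alpha) = (\alpha^2 + (m+nj)^2)^{-\nu-1/2}$ and $m \geq 1$ guarantees $m + nj \neq 0$ for every $j \in \Zm$, one may factor $\alpha^2 + (m+nj)^2 = (m+nj)^2\bigl(1 + \alpha^2/(m+nj)^2\bigr)$, so that
\[
1 + u_{n,m,j}(\nu,\alpha) = \left(1 + \frac{\alpha^2}{(jn+m)^2}\right)^{-\nu - 1/2}.
\]
Everything then reduces to bounding this explicit quantity uniformly in $n$, $j$, $\nu \in N$ and $\alpha \in A$.

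The upper bound $u_{n,m,j}(\nu,\alpha) \leq 0$ is immediate, since $\alpha^2/(jn+m)^2 > 0$ and the exponent $-\nu - 1/2$ is negative, so the right-hand side is $< 1$. For the lower bound, the one genuinely useful input is the elementary inequality $\abs{jn+m} \geq m$, valid for all $j \in \Zm$ precisely because $1 \leq m \leq l = \floor*{(n-1)/2}$ forces $n \geq 2m+1$ (so that $\abs{jn+m} \geq n - m \geq m+1$ when $j \neq 0$, with equality at $j = 0$). Combining this with the monotonicity of $t \mapsto t^{-\nu-1/2}$ and with $\nu \leq b_\nu$ yields
\[
1 + u_{n,m,j}(\nu,\alpha) \geq \left(1 + \frac{b_\alpha^2}{m^2}\right)^{-b_\nu - 1/2} =: 1 + v_m,
\]
and the right-hand side is manifestly a positive real, so $v_m > -1$; it is also $\leq 0$ and depends only on $m$ and the fixed bounds.

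Finally, $v_m = \Ocal(m^{-2})$ follows from the Bernoulli-type bound $(1+x)^{-s} \geq 1 - sx$ for $x, s \geq 0$, obtained from the estimate $\abs{\frac{\mathrm d}{\mathrm dt}(1+t)^{-s}} \leq s$ on $[0,x]$: this gives $v_m \geq -(b_\nu + 1/2)\, b_\alpha^2\, m^{-2}$, which together with $v_m \leq 0$ yields $\abs{v_m} \leq (b_\nu + 1/2)\, b_\alpha^2\, m^{-2}$. I do not anticipate any real obstacle; the only point requiring a little care is to choose a single $v_m$ that is simultaneously nonpositive, strictly larger than $-1$, of order $m^{-2}$, and independent of $(n, j, \nu, \alpha)$, which the closed-form choice above achieves.
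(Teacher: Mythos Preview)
Your proof is correct and follows essentially the same approach as the paper: both define $u_{n,m,j}(\nu,\alpha) = (1+\alpha^2/(jn+m)^2)^{-\nu-1/2}-1$, use $\abs{jn+m}\ge m$ together with $\alpha\le b_\alpha$ and $\nu\le b_\nu$ to set $v_m = (1+b_\alpha^2/m^2)^{-b_\nu-1/2}-1$, and finish with a first-order Taylor/Bernoulli bound to get $v_m=\Ocal(m^{-2})$. Your version is in fact slightly more explicit, since you justify $\abs{jn+m}\ge m$ from $1\le m\le l$ and spell out the Bernoulli inequality, whereas the paper just writes ``elementary operations'' and ``Taylor inequality''.
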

\begin{proof}
Using~\eqref{eq:eigen_values}, we have
$$
c_{m + n j}(\nu, \alpha) = \frac{1}{(\alpha^2 + (j n + m)^2)^{\nu + 1/2}}
= \frac{1 + u_{n, m, j}(\nu, \alpha)}{ \abs{j n  + m}^{2 \nu + 1}},
$$
with $u_{n, m, j}(\nu, \alpha) = \left(1 + (\alpha/(jn + m))^2 \right)^{-\nu - 1/2} - 1$.
Elementary operations show that
$$
0 \geq u_{n, m, j}(\nu, \alpha) \geq \left(  \left( \frac{\alpha_{\max}}{m} \right)^2 + 1 \right)^{-\nu_{\max} - 1/2} - 1,
$$
which gives the desired result thanks to the Taylor inequality.
\end{proof}

\begin{lem}\label{lem:rough_gamma_behaviour}
Let $S \subset \left(1, \, + \infty \right)$ be a compact interval. It holds that
\begin{equation*}
\gamma \left( s; \, x \right) = \frac{1}{x^{s}} + \frac{1}{(1 - x)^{s}}
+ \Ocal \left( 1 \right),
\end{equation*}
uniformly in $s \in S$ and $x \in \left(0, \, 1\right)$. In particular, we have
$$\gamma \left( s; \, x \right) \approx \min \left(x, \, 1-x\right)^{-s}.$$
\end{lem}
\begin{proof}
Let $s_{\min} = \min S$. Then,
$0 \leq
\gamma \left( s; \, x \right) - x^{-s} - (1 - x)^{-s}
\leq 2 \zeta(s_{\min})$.
\end{proof}

\begin{lem}\label{lem:rough_gamma_der_behaviour}
Let $S \subset \left(1, \, + \infty \right)$ be a compact interval. It holds that
\begin{equation*}
\frac{\partial \gamma}{\partial s} \left( s; \, x \right) = - \frac{\ln(x)}{x^{s}} - \frac{\ln(1-x)}{(1 - x)^{s}}
+ \Ocal \left( 1 \right),
\end{equation*}
uniformly in $s \in S$ and $x \in \left(0, \, 1\right)$.
\end{lem}
\begin{proof}
Similar to the proof of Lemma~\ref{lem:rough_gamma_behaviour}.
\end{proof}

\begin{lem}\label{lem:log_det}
Uniformly in $\nu \in N$ and $\alpha \in A$, we have
$$
\ln(\det(R_{\nu, \alpha}))
=
- 2\nu n \ln(n)
+ n \int_{0}^{1} g_{\nu} + \mathcal{O}(\ln(n)).
$$
\end{lem}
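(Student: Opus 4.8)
The plan is to compute $\ln(\det(R_{\nu,\alpha})) = \sum_{m=0}^{n-1} \ln(\lambda_{m,n})$ by splitting the sum and identifying the Riemann-sum structure. First I would use the symmetry~\eqref{eq:eigen_sym} to write $\sum_{m=0}^{n-1}\ln(\lambda_{m,n}) = \ln(\lambda_{0,n}) + 2\sum_{m=1}^{l}\ln(\lambda_{m,n})$ (with a parity correction term $\ln(\lambda_{n/2,n})$ when $n$ is even, which is $\mathcal{O}(\ln n)$ and absorbed into the error). By Lemma~\ref{lem:rough_bound_c}, $\ln(\lambda_{0,n}) = \ln n + \mathcal{O}(1)$, which is again $\mathcal{O}(\ln n)$. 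So the whole content is in $2\sum_{m=1}^{l}\ln(\lambda_{m,n})$.

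Next I would pull out the leading power. Write $\lambda_{m,n}/n = c_m(\nu,\alpha)\,\rho_{m,n}(\nu,\alpha)$ where $\rho_{m,n} = 1 + \sum_{j\neq 0} c_{m+nj}(\nu,\alpha)/c_m(\nu,\alpha)$, and $c_m(\nu,\alpha) = (\alpha^2+m^2)^{-\nu-1/2}$. Then
$$
2\sum_{m=1}^{l}\ln(\lambda_{m,n}) = 2l\ln n - (2\nu+1)\sum_{m=1}^{l}\ln(\alpha^2+m^2) + 2\sum_{m=1}^{l}\ln\!\big(\rho_{m,n}(\nu,\alpha)\big) + 2\sum_{m=1}^l\big[\ln(\alpha^2+m^2)^{-\nu-1/2}-\ln c_m\big],
$$
the last bracket being zero; I keep it only to make the bookkeeping explicit. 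Now $2l\ln n = (n-1)\ln n + \mathcal{O}(\ln n) = n\ln n + \mathcal{O}(\ln n)$. For the second sum, $\sum_{m=1}^{l}\ln(\alpha^2+m^2) = \sum_{m=1}^l 2\ln m + \sum_{m=1}^l \ln(1+\alpha^2/m^2) = 2\ln(l!) + \mathcal{O}(1)$, and Stirling gives $2\ln(l!) = 2l\ln l - 2l + \mathcal{O}(\ln n) = n\ln n - n\ln 2 - n + \mathcal{O}(\ln n)$ after substituting $l = n/2 + \mathcal{O}(1)$; multiplying by $(2\nu+1)$ gives $(2\nu+1)(n\ln n - n\ln 2 - n) + \mathcal{O}(\ln n)$. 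Combining the first two contributions yields $n\ln n - (2\nu+1)n\ln n + (2\nu+1)n(\ln 2 + 1) + \mathcal{O}(\ln n) = -2\nu n\ln n + (2\nu+1)n(1+\ln 2) + \mathcal{O}(\ln n)$, using $\mathcal{O}(n) = o(n)$... but wait, here the $(2\nu+1)n(1+\ln2)$ term is genuinely of order $n$, not absorbable, so it must combine with the $\rho$-sum to produce the stated $n\int_0^1 g_\nu$.

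So the crux is the third term: I would show $2\sum_{m=1}^{l}\ln(\rho_{m,n}(\nu,\alpha))$, together with the leftover order-$n$ constant above, equals $n\int_0^1 g_\nu(x)\,dx + \mathcal{O}(\ln n)$ uniformly. The right way to see this is \emph{not} to separate as above but to recognize $\lambda_{m,n}/n = \sum_{j\in\Zm} c_{m+nj}(\nu,\alpha)$ directly. For $1\le m\le l$, Lemma~\ref{lem:bound_c_rho} gives $c_{m+nj}(\nu,\alpha) = |jn+m|^{-2\nu-1}(1+u_{n,m,j})$ with controlled $u$'s, so
$$
\frac{\lambda_{m,n}}{n} = \sum_{j\in\Zm}\frac{1+u_{n,m,j}(\nu,\alpha)}{|jn+m|^{2\nu+1}} = \frac{1}{n^{2\nu+1}}\,\gamma\!\Big(2\nu+1;\frac{m}{n}\Big)\big(1 + \varepsilon_{m,n}(\nu,\alpha)\big),
$$
where $|\varepsilon_{m,n}| \lesssim m^{-2}$ uniformly (from $v_m = \mathcal{O}(m^{-2})$ and comparing $\gamma(2\nu+1;m/n)$ with the $j=0$ term, which it dominates up to constants by Lemma~\ref{lem:rough_bound_c}). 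Hence $\ln(\lambda_{m,n}) = \ln n - (2\nu+1)\ln n + g_\nu(m/n) + \mathcal{O}(m^{-2})$, i.e. $\ln(\lambda_{m,n}) = -2\nu\ln n + g_\nu(m/n) + \mathcal{O}(m^{-2})$. Summing $2\sum_{m=1}^l$ and adding back $\ln(\lambda_{0,n}) = \mathcal{O}(\ln n)$ plus the even-$n$ correction $\mathcal{O}(\ln n)$: the $-2\nu\ln n$ terms give $-2\nu(2l)\ln n + \mathcal{O}(\ln n) = -2\nu n\ln n + \mathcal{O}(\ln n)$; the error terms give $\mathcal{O}(\sum_{m\ge1} m^{-2}) = \mathcal{O}(1)$; and $2\sum_{m=1}^l g_\nu(m/n)$ is a Riemann sum for $2\int_0^{1/2} g_\nu = \int_0^1 g_\nu$ (using symmetry of $g_\nu$ about $1/2$ from Section~2.5), with error $\mathcal{O}(1)$ since $g_\nu \in C^\infty$ on $(0,1)$ is smooth on $(0,1/2]$ but blows up like $\ln$ near $0$ — the Riemann-sum error near the singularity needs the explicit Taylor/monotonicity control of $g_\nu$ near $0$, giving an $\mathcal{O}(\ln n)$ (not $\mathcal{O}(1)$) endpoint contribution. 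This is consistent with the claimed $\mathcal{O}(\ln n)$.

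\textbf{Main obstacle.} The delicate point is uniformity in $\nu\in N$, $\alpha\in A$ of all error estimates, and in particular controlling the Riemann-sum error for $\int_0^1 g_\nu$ near the endpoint $x=0$ where $g_\nu(x)\sim -(2\nu+1)\ln x + \mathcal{O}(1)$ is unbounded. I would handle this by treating $m=1,\dots,m_0$ separately for a fixed cutoff $m_0$ (contributing $\mathcal{O}(\ln n)$, since each term is $-2\nu\ln n + \mathcal{O}(1)$ and there are finitely many, plus $\int_0^{m_0/n} g_\nu = \mathcal{O}((m_0/n)\ln(n/m_0))\to 0$), and using the $C^1$ bound on $g_\nu$ on $[m_0/n, 1/2]$ for the standard Riemann-sum estimate on the remainder, all bounds being uniform because $N$ and $A$ are compact and $\gamma$ is jointly $C^\infty$ on $(1,\infty)\times(0,1)$. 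Everything else is elementary bookkeeping with Stirling and the two preceding lemmas.
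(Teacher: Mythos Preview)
Your proposal is correct and, after you abandon the first Stirling-based decomposition, takes essentially the same route as the paper: use Lemma~\ref{lem:bound_c_rho} to write $\lambda_{m,n}/n = n^{-2\nu-1}\gamma(2\nu+1;m/n)(1+\varepsilon_{m,n})$ with $\varepsilon_{m,n}=\mathcal O(m^{-2})$, sum the logs, and handle $m=0$ and $m=n/2$ via Lemma~\ref{lem:rough_bound_c}. The only difference is in the Riemann-sum control near the singularity at $x=0$: the paper exploits that $g_\nu$ is non-increasing on $(0,1/2]$ to sandwich the sum between two integrals and then bounds $\int_0^{1/n} g_\nu = \mathcal O(\ln(n)/n)$, whereas you use a fixed cutoff $m_0$ together with a $C^1$ bound on $[m_0/n,1/2]$; both yield the required $\mathcal O(\ln n)$, though the monotonicity argument is a little cleaner and avoids having to track that $\sup_{[m_0/n,1/2]}|g_\nu'|\asymp n$.
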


\begin{proof}
Let $\nu \in N$ and $\alpha \in A$.
Using~\eqref{eq:eigen_values} and Lemma~\ref{lem:bound_c_rho}, we have
$$
\lambda_{m, n}/n = \sum_{j \in \Zm} c_{m + n j}(\nu, \alpha)
=  \sum_{j \in \Zm} \frac{1 + u_{n, m, j}(\nu, \alpha)}{ \abs{j n  + m}^{2 \nu + 1}}.
$$
Therefore, using the notation $l = \floor*{(n-1) / 2}$, we have
$$
\sum_{m = 1}^{l} \ln(\lambda_{m, n}/n)
=
- (2\nu + 1) l \ln(n)
+ a_n \left(\nu, \, \alpha\right)
+ \sum_{m = 1}^{l} g_{\nu} (m/n),
$$
with
$$\left| a_n\left(\nu, \, \alpha\right) \right| \leq \left|   \sum_{m = 1}^{l} \ln(1 +  v_m) \right| = \mathcal{O}(1)$$
uniformly in $\nu \in N$ and $\alpha \in A$.

The function $g_\nu$ is symmetric with respect to $1/2$.
Moreover, a direct consequence of Lemma~\ref{lem:rough_gamma_behaviour} is that
\begin{equation}\label{eq:dev_lim_g}
g_{\nu}(x) =-(2\nu + 1) \ln(x) + \Ocal \left( 1 \right),
\end{equation}
uniformly in $\nu \in N$ and $0 < x \leq 1/2$.
For~$\nu \in N$, the function
$g_{\nu}$ is thus integrable on~$\left(0, \, 1\right)$.
Furthermore,
verifying that it is non-increasing on $\left(0, \, 1/2\right]$
is straightforward using the derivative of $\gamma \left( 2\nu + 1; \, \cdot\right)$, so we have:
$$
\int_{1/n}^{(l + 1)/n} g_{\nu} \leq \frac{1}{n} \sum_{m = 1}^l g_{\nu}(m/n) \leq \int_{0}^{l/n} g_{\nu}.
$$
Use then~\eqref{eq:dev_lim_g} to get
$
\int_{0}^{1/n} g_{\nu} = \mathcal{O}(\ln(n)/n)
$,
uniformly in $\nu \in N$.
The remainders~$\int_{l/n}^{1/2} g_\nu$
and~$\int_{1/2}^{(l+1)/n} g_\nu$ are
$\Ocal ( n^{-1} )$ uniformly in $\nu \in N$
by a compacity argument using the continuity of~$\gamma$.

Therefore, we have
$$
\sum_{m = 1}^l g_{\nu}(m/n) = n \int_{0}^{1/2} g_{\nu} + \mathcal{O}(\ln(n)),
$$
uniformly in $\nu \in N$.
Moreover, Lemma~\ref{lem:rough_bound_c}
shows that $\ln \left( \lambda_{0, n}/n \right) = \Ocal(1)$ and
$\ln \left( \lambda_{n/2, n}/n \right) = \Ocal(\ln(n))$ uniformly for $n$ even.
One can then conclude using~\eqref{eq:eigen_sym}.
\end{proof}

\subsubsection{Approximating $Z\tr R_{\nu, \alpha} Z$}\label{sec:approx_quad}

Let us first give some definitions. For~$\epsilon > 0$, Lemma~\ref{lem:rough_gamma_behaviour} can be used
to show that there exists some
$C > 0$ such that
\begin{equation}\label{eq:envelope_def}
h_{\nu; \nu_0}(x) \leq F_{\epsilon}(x) =
C \min(x, 1 - x)^{- 1 + 2\epsilon}
,
\ \mathrm{for \ all} \ 0 < x < 1 \ \mathrm{and} \ \nu \in N_{\epsilon}.
\end{equation}
The function $F_{\epsilon}$ will be called the envelope of
the family $\mathcal{F}_{\epsilon} = \{h_{\nu; \nu_0}, \ \nu \in N_{\epsilon}\}$ of functions.

\begin{lem}\label{lem:dev_pnll_full}
For $0 < \epsilon < 1/2$, the sequence~$\Mset_n$
converges almost surely uniformly to $\left(\nu, \, \alpha\right) \mapsto \Uset \left( \nu \right)$ on~$N_\epsilon \times A$.
\end{lem}
\begin{proof}
For $\nu \in N_\epsilon$ and $\alpha \in A$, we have:
\begin{equation}\label{eq:cv_norm_unif}
\frac{\phi_0^{-1} Z\tr R_{\nu, \alpha}^{-1} Z}{n^{1 + 2(\nu - \nu_0)} } - \int_0^1 h_{\nu; \nu_0}
\end{equation}
$$
=
\frac{U_{0, n}^2  \lambda_{0, n}^{(0)} }{n^{1+2(\nu - \nu_0)} \lambda_{0, n}}
+
\frac{1}{n} \sum_{m=1}^{n-1} U_{m, n}^2 \left(\frac{\lambda_{m, n}^{(0)} }{n^{2(\nu - \nu_0)} \lambda_{m, n}} - h_{\nu; \nu_0} \left( m/n \right) \right) 
$$
$$
+ 
\frac{1}{n}  \sum_{m=1}^{n-1} B_{m, n} h_{\nu; \nu_0} \left( m/n \right)
+
\left( \frac{1}{n} \sum_{m=1}^{n-1}h_{\nu; \nu_0} \left( m/n \right)  - \int_0^1 h_{\nu; \nu_0} \right),
$$
with $B_{m, n} = U_{m, n}^2 - 1$.
First,
$
\sup_{\nu \in N_{\epsilon}} \left| n^{-1} \sum_{m = 1}^{n - 1} B_{m, n} h_{\nu ; \nu_0} \left( m/n \right) \right|
$
converges almost surely  to zero by
Lemma~\ref{lem:marginal_full_convergence},
Lemma~\ref{lem:stoch_eq}, and Arzel\`a-Ascoli theorem. Then, 
for all $\beta > 0$,
a Borel-Cantelli argument shows that $U_{0, n}^2 \lesssim n^{\beta}$ almost surely,
so the $m=0$-term converges almost surely uniformly to zero by Lemma~\ref{lem:rough_bound_c}.
Finally, Lemma~\ref{lem:int_h} and Lemma~\ref{lem:link_exp_norm_h} show that~\eqref{eq:cv_norm_unif}
converges almost surely uniformly.
Conclude using Proposition~\ref{prop:profile_lik}, Lemma~\ref{lem:log_det}, and the~$L^{\infty}$-continuity
at~$H\colon \left(\nu, \, \alpha\right) \in N_{\epsilon} \times A \mapsto \int_0^1 h_{\nu; \nu_0}$
of the mapping~$\psi$ used in the proof of Lemma~\ref{lem:dev_pnll}.
\end{proof}

\begin{lem}\label{lem:mono_h}
The function $h_{\nu; \nu_0}$
is non-decreasing (resp. non-increasing) on $(0, 1/2]$ when $\nu \geq \nu_0$ (resp. $\nu \leq \nu_0$).
\end{lem}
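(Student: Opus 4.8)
The plan is to reduce the statement to a monotonicity property in $\alpha$ of the single-variable function $\phi_x(\alpha) := -\partial_x \log \gamma(\alpha; x)$ (with $x$ fixed), and then to read that property off from the classical integral representation of the Hurwitz zeta function. Write $a = 2\nu_0 + 1$ and $b = 2\nu + 1$, both in $(1, +\infty)$. Since $\gamma$ is $C^\infty$ and $h_{\nu; \nu_0} = \gamma(a; \cdot)/\gamma(b; \cdot)$ is positive and continuous on $(0, 1/2]$, differentiating its logarithm gives, for $x \in (0, 1/2)$,
\[
\tfrac{d}{dx}\log h_{\nu; \nu_0}(x) = \partial_x \log \gamma(a; x) - \partial_x \log \gamma(b; x) = \phi_x(b) - \phi_x(a).
\]
Hence the lemma follows at once if $\alpha \mapsto \phi_x(\alpha)$ is non-decreasing on $(1, +\infty)$ for each fixed $x$: when $\nu \geq \nu_0$ this forces $\tfrac{d}{dx}\log h_{\nu; \nu_0} \geq 0$ on $(0, 1/2)$, so $h_{\nu; \nu_0}$ is non-decreasing on $(0, 1/2]$, and when $\nu \leq \nu_0$ it forces the reverse.

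To handle $\phi_x(\alpha)$ I would set $s = 1/2 - x \in [0, 1/2)$; the case $s = 0$ (i.e. $x = 1/2$) is immediate since $\gamma(\alpha; \cdot)$ is symmetric about $1/2$, so $\phi_x \equiv 0$ there. For $s \in (0, 1/2)$, combining $\gamma(\alpha; x) = \zeta_H(\alpha; x) + \zeta_H(\alpha; 1-x)$ with $\Gamma(\alpha) \zeta_H(\alpha; t) = \int_0^\infty u^{\alpha-1} e^{-tu}/(1 - e^{-u})\, du$ and the identities $e^{-xu} + e^{-(1-x)u} = 2 e^{-u/2} \cosh(su)$ and $1 - e^{-u} = 2 e^{-u/2} \sinh(u/2)$ yields
\[
\Gamma(\alpha)\, \gamma(\alpha; x) = \int_0^\infty u^{\alpha-1}\, \frac{\cosh(su)}{\sinh(u/2)}\, du =: I(\alpha,s),
\]
a convergent integral (the integrand is $\sim 2 u^{\alpha - 2}$ near $0$, with $\alpha > 1$, and decays like $e^{(s - 1/2)u}$ at infinity, with $s < 1/2$). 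Differentiating in $x$, i.e. applying $-\partial_s$, and using $\partial_s \cosh(su) = u \sinh(su) = u \tanh(su) \cosh(su)$, I obtain
\[
\phi_x(\alpha) = \frac{\partial_s I(\alpha,s)}{I(\alpha,s)} = \int_0^\infty u \tanh(su)\; d\mu_{\alpha, s}(u),
\]
where $\mu_{\alpha, s}$ is the probability measure on $(0, +\infty)$ with density proportional to $u^{\alpha - 1} \cosh(su) / \sinh(u/2)$.

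I would then conclude with two elementary observations. First, $u \mapsto u \tanh(su)$ is non-negative and non-decreasing on $(0, +\infty)$, being a product of two such functions. Second, the family $\{\mu_{\alpha, s}\}_{\alpha > 1}$ has the monotone likelihood ratio property in $\alpha$: for $\alpha_1 < \alpha_2$ the ratio of densities is proportional to $u^{\alpha_2 - \alpha_1}$, which is non-decreasing in $u$, so $\mu_{\alpha_2, s}$ stochastically dominates $\mu_{\alpha_1, s}$. Integrating the non-decreasing, non-negative test function against these stochastically ordered measures gives $\phi_x(\alpha_1) \leq \phi_x(\alpha_2)$, as required.

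The main obstacle is finding the right reduction. Working directly with the series $\gamma(\alpha; x) = \sum_{j \in \Zm} |j + x|^{-\alpha}$ does not close cleanly: one gets $\phi_x(\alpha) = \alpha\, \EE_{\pi_\alpha}[1/(J + x)]$ with $\pi_\alpha(j) \propto |j + x|^{-\alpha}$, but $j \mapsto 1/(j + x)$ changes sign, so a stochastic-monotonicity statement about $\pi_\alpha$ in $\alpha$ is not enough. The integral representation is precisely what diagonalizes the problem, turning the sign-changing summand into the genuinely monotone and non-negative factor $u \tanh(su)$. Everything else is routine: justifying differentiation under the integral sign (dominated convergence, $s$ ranging over compact subsets of $[0, 1/2)$) and recalling that monotone likelihood ratio implies first-order stochastic dominance, hence ordering of expectations of monotone functions.
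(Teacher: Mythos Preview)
Your proof is correct and, at its core, coincides with the paper's: both hinge on the integral representation $\Gamma(\alpha)\zeta_H(\alpha; t) = \int_0^\infty u^{\alpha-1} e^{-tu}/(1-e^{-u})\,du$ to convert the sign question for $h'_{\nu;\nu_0}$ into a concrete integral inequality. The paper expands the numerator $\gamma_{2\nu+1}\,\gamma'_{2\nu_0+1} - \gamma_{2\nu_0+1}\,\gamma'_{2\nu+1}$ as a double integral in $(s,t)$ and applies the swap $(s,t)\mapsto(t,s)$ to isolate a factor $(t^{2\nu}s^{2\nu_0} - s^{2\nu}t^{2\nu_0})(\eta(s,t;x)-\eta(t,s;x))$ of definite sign. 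Your argument is precisely the probabilistic repackaging of that swap: the paper's symmetrization is the hands-on proof that monotone likelihood ratio implies ordering of expectations, and you invoke that principle directly for the family $d\mu_{\alpha,s}(u)\propto u^{\alpha-1}\cosh(su)/\sinh(u/2)\,du$ against the non-decreasing test function $u\mapsto u\tanh(su)$. The hyperbolic substitution $s=1/2-x$ is a clean touch---it makes the monotone factor manifest and lets you work directly on $(0,1/2]$ rather than via symmetry from $[1/2,1)$ as the paper does---but it does not change the underlying mechanism.
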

\begin{proof}
Suppose that $\nu \geq \nu_0$.
Use~\eqref{eq:hurwitz_zeta_link}
along with the fact that the Hurwitz Zeta function verifies
\begin{equation}\label{eq:hurwitz_der}
\frac{\partial \zeta_H}{\partial x}(s; \ x) = - s \zeta_H(s + 1; \ x),
\quad
\mathrm{for} \
x > 0,
\
\mathrm{and}
\
s > 1,
\end{equation}
and has the representation
$$
\zeta_H(s; \ x) = \frac{1}{\Gamma(s)} \int_{0}^{+\infty} \frac{t^{s - 1} e^{-tx}}{1 - e^{-t}} \ddiff t,
\quad
\mathrm{for}
\ x > 0,
\
\mathrm{and}
\
s > 1,
$$ 
where $\Gamma$ is the classical Gamma function \citep[see, e.g.,][]{postnikov1988:_introduction}.
So, for $x \in \left(0, \, 1\right)$, we have
$$
\gamma \left( 2\nu + 1; \, x\right) = \frac{1}{\Gamma(2\nu + 1)} \int_{0}^{+\infty} \frac{t^{2\nu} ( e^{-tx} + e^{-t(1-x)} ) }{1 - e^{-t}} \ddiff t,
$$
and
$$
\frac{\partial \gamma}{\partial x}\left( 2\nu + 1; \, x\right) = \frac{1}{\Gamma(2\nu + 1)} \int_{0}^{+\infty} \frac{t^{2\nu + 1} ( e^{-t(1-x)} - e^{-tx}) }{1 - e^{-t}} \ddiff t.
$$

Now let $x \in [1/2, 1)$, the derivative of $h_{\nu; \nu_0}$ at $x$ has the sign of
$$
\gamma \left( 2\nu+1; \, x \right)
\frac{\partial \gamma}{\partial x}\left( 2\nu_0 + 1; \, x\right)
-
\gamma \left( 2\nu_0 +1; \, x \right)
\frac{\partial \gamma}{\partial x}\left( 2\nu+1; \, x \right) 
$$
$$
= \frac{1}{\Gamma(2\nu + 1) \Gamma(2\nu_0 + 1)}
\int_{0}^{+\infty} \int_{0}^{+\infty}
\frac{t^{2\nu} s^{2\nu_0} ( \eta(s, t; x) - \eta(t, s; x)) }{\kappa(s, t)}
\ddiff t \ddiff s
$$
with $\eta(s, t; x) = s(e^{-tx} + e^{-t(1-x)})(e^{-s(1-x)} - e^{-sx})$ and $\kappa(s, t) = (1 - e^{-t}) (1 - e^{-s}) = \kappa(t, s)$
thanks to the Fubini-Lebesgue theorem.
Then, one can split the integral to have:
$$
\frac{1}{\Gamma(2\nu + 1) \Gamma(2\nu_0 + 1)}
\left(
\int_{0}^{+\infty} \int_{t}^{+\infty}
\frac{t^{2\nu} s^{2\nu_0} ( \eta(s, t; x) - \eta(t, s; x)) }{\kappa(s, t)}
\ddiff t \ddiff s
\right.
$$
$$
\left.
+
\int_{0}^{+\infty} \int_{t}^{+\infty}
\frac{s^{2\nu} t^{2\nu_0} ( \eta(t, s; x) - \eta(s, t; x)) }{\kappa(t, s)}
\ddiff t \ddiff s
\right)$$
\begin{align*}
& \, = \, \frac{1}{\Gamma(2\nu + 1) \Gamma(2\nu_0 + 1)} \\
& \, \cdot \, \int_{0}^{+\infty} \int_{t}^{+\infty}
\frac{(t^{2\nu} s^{2\nu_0} - s^{2\nu} t^{2\nu_0} ) ( \eta(s, t; x) - \eta(t, s; x)) }{\kappa(s, t)}
\ddiff t \ddiff s \leq 0
\end{align*}
since $t^{2\nu} s^{2\nu_0} \leq s^{2\nu} t^{2\nu_0}$ when $s \geq t$, $\kappa(s, t) \geq 0$
and $\eta(s, t; x) \geq \eta(t, s; x)$ when $s \geq t$ and $x \geq 1/2$.

So we proved that $h_{\nu; \nu_0}$ is non-increasing on $[1/2, 1)$
and the first claim is due to the symmetry with respect to $1/2$.
Observe
that $h_{\nu; \nu_0} = 1/h_{\nu_0; \nu}$ for the second claim.
\end{proof}

\begin{lem}\label{lem:int_h}
Let $\epsilon > 0$, we have
$$
\frac{1}{n} \sum_{m = 1}^{n - 1} h_{\nu; \nu_0}(m/n)
= \int_{0}^{1} h_{\nu; \nu_0} + \mathcal{O} \left( \frac{1}{n^{\min(1, \ 2\epsilon)}} \right),
$$
uniformly in $\nu \in N_{\epsilon}$. 
\end{lem}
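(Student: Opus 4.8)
The plan is to reduce, by the symmetry of $h_{\nu;\nu_0}$ about $1/2$ and its monotonicity on $(0,1/2]$, to a single quantitative estimate on $\int_0^{1/n}h_{\nu;\nu_0}$, which is exactly where the rate $n^{-\min(1,2\epsilon)}$ will come from. First I would use that $\gamma(\alpha;\cdot)$, hence $h_{\nu;\nu_0}$, is symmetric about $1/2$, together with the change of index $m\mapsto n-m$, to write
$$
\frac{1}{n}\sum_{m=1}^{n-1}h_{\nu;\nu_0}(m/n)=\frac{2}{n}\sum_{m=1}^{l}h_{\nu;\nu_0}(m/n)+O(1/n)
$$
uniformly in $\nu\in N_\epsilon$, the error term being $0$ for odd $n$ and $\frac1n h_{\nu;\nu_0}(1/2)$ (uniformly bounded by compactness) for even $n$. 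Since $\int_0^1 h_{\nu;\nu_0}=2\int_0^{1/2}h_{\nu;\nu_0}$ and $O(1/n)\subseteq O(n^{-\min(1,2\epsilon)})$, it remains to estimate $\frac1n\sum_{m=1}^{l}h_{\nu;\nu_0}(m/n)-\int_0^{1/2}h_{\nu;\nu_0}$.

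On $(0,1/2]$ the function $h_{\nu;\nu_0}$ is monotone (Lemma~\ref{lem:mono_h}), so the elementary monotone‑function bounds, summed over $1\le m\le l$, sandwich $\frac1n\sum_{m=1}^{l}h_{\nu;\nu_0}(m/n)$ between $\int_0^{l/n}h_{\nu;\nu_0}$ and $\int_{1/n}^{(l+1)/n}h_{\nu;\nu_0}$. Comparing each of those with $\int_0^{1/2}h_{\nu;\nu_0}$ leaves an endpoint contribution near $1/2$ of size $O(1/n)$ uniformly (compactness of $N_\epsilon\times[1/4,1/2]$, exactly as in the proof of Lemma~\ref{lem:log_det}), plus the term $\int_0^{1/n}h_{\nu;\nu_0}$. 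For the latter I would use the uniform asymptotic $\gamma(2\nu+1;x)\sim x^{-(2\nu+1)}$ as $x\to0$ — uniform in $\nu\in N$ because $\gamma(2\nu+1;x)-x^{-(2\nu+1)}=\sum_{j\in\Zm,\,j\neq0}|j+x|^{-(2\nu+1)}$ is uniformly bounded, cf.~\eqref{eq:rho_unif_eq} — together with the (fixed) analogue for $\gamma(2\nu_0+1;\cdot)$, to get a fixed $x_0>0$ and a constant $C$ with $0\le h_{\nu;\nu_0}(x)\le Cx^{2(\nu-\nu_0)}$ on $(0,x_0]$ for all $\nu\in N_\epsilon$. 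Then for $n\ge 1/x_0$,
$$
\int_0^{1/n}h_{\nu;\nu_0}(x)\,dx\le\frac{C}{2(\nu-\nu_0)+1}\,n^{-(2(\nu-\nu_0)+1)},
$$
and since $2(\nu-\nu_0)+1\ge2\epsilon$ on $N_\epsilon$, both the prefactor ($\le C/(2\epsilon)$) and the power ($n^{-(2(\nu-\nu_0)+1)}\le n^{-2\epsilon}\le n^{-\min(1,2\epsilon)}$) are controlled uniformly.

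Collecting the $O(1/n)$ and $O(n^{-\min(1,2\epsilon)})$ contributions and using $\min(1,2\epsilon)\le1$ then yields the claim. The main obstacle is precisely this last estimate: extracting the $n^{-\min(1,2\epsilon)}$ rate from the integrable — but, for $\nu<\nu_0$, unbounded — behavior of $h_{\nu;\nu_0}$ at $0$, uniformly as $\nu$ runs up to the boundary value $\nu_0-1/2+\epsilon$; everything else is the standard monotone Riemann‑sum argument plus bookkeeping for the parity of $n$ and for the endpoint $1/2$.
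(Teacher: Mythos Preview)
Your argument is correct and is essentially the same as the paper's own proof: reduce by the symmetry of $h_{\nu;\nu_0}$ about $1/2$, apply the monotone Riemann‑sum sandwich from Lemma~\ref{lem:mono_h}, handle the endpoint near $1/2$ by compactness, and control $\int_0^{1/n}h_{\nu;\nu_0}$ via the uniform equivalence $h_{\nu;\nu_0}(x)\sim x^{2(\nu-\nu_0)}$ to extract the $n^{-2\epsilon}$ rate. Your write‑up is slightly more explicit about the parity of $n$ and the constant $C/(2\epsilon)$, but the structure and key estimates coincide.
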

\begin{proof}
The proof is similar to the treatment of~$n^{-1} \sum_{m = 1}^{n - 1} g_{\nu}(m/n)$
in the proof of Lemma~\ref{lem:log_det} using Lemma~\ref{lem:mono_h} and~\eqref{eq:envelope_def}
to get:
\begin{equation}\label{eq:int_h_zero}
\int_{0}^{1/n} h_{\nu; \nu_0} \leq \int_0^{1/n} F_\epsilon = \mathcal{O}(n^{-2\epsilon}), 
\quad
\mathrm{uniformly \ in} \ \nu \in N_{\epsilon}.
\end{equation}%
\end{proof}

\begin{lem}\label{lem:h_unif_alpha}
Let $1 \leq m \leq \floor*{n/2}$, we have
$$
\frac{\lambda_{m, n}^{(0)}}{n^{2(\nu - \nu_0)} \lambda_{m, n}}
= \left( 1 + \mathcal{O}(m^{-2}) \right) h_{\nu; \nu_0}(m/n)
$$
uniformly in $\nu \in N$ and $\alpha \in A$.
\end{lem}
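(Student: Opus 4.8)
The plan is to expand both $\lambda_{m,n}$ and $\lambda_{m,n}^0$ with Lemma~\ref{lem:bound_c_rho}, factor out the relevant power of $n$ so that $\gamma$ appears, and then take the quotient. First I would use~\eqref{eq:eigen_values} together with Lemma~\ref{lem:bound_c_rho}: for $1 \leq m \leq l$, since $\abs{jn+m}^{2\nu+1} = n^{2\nu+1}\abs{j+m/n}^{2\nu+1}$,
$$
\frac{\lambda_{m,n}}{n} = \sum_{j \in \Zm} c_{m+nj}(\nu,\alpha)
= n^{-(2\nu+1)} \sum_{j \in \Zm} \frac{1 + u_{n,m,j}(\nu,\alpha)}{\abs{j + m/n}^{2\nu+1}}.
$$
Splitting off the $1$ from the $u_{n,m,j}$ term, the leading piece is exactly $n^{-(2\nu+1)}\gamma_{2\nu+1}(m/n)$, while the remainder is bounded in absolute value by $\abs{v_m}\,n^{-(2\nu+1)}\gamma_{2\nu+1}(m/n)$ because $\abs{u_{n,m,j}(\nu,\alpha)} \leq \abs{v_m}$ for every $j$, with $v_m = \mathcal{O}(m^{-2})$ uniformly in $\nu \in N$ and $\alpha \in A$. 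Hence
$$
\frac{\lambda_{m,n}}{n} = n^{-(2\nu+1)}\gamma_{2\nu+1}(m/n)\bigl(1 + \mathcal{O}(m^{-2})\bigr)
$$
uniformly in $\nu \in N$, $\alpha \in A$, and, applying the same expansion with the fixed parameters $\nu_0,\alpha_0$,
$$
\frac{\lambda_{m,n}^0}{n} = n^{-(2\nu_0+1)}\gamma_{2\nu_0+1}(m/n)\bigl(1 + \mathcal{O}(m^{-2})\bigr).
$$

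Then I would divide the second display by the first. The powers of $n$ combine to $n^{2(\nu-\nu_0)}$, the ratio $\gamma_{2\nu_0+1}(m/n)/\gamma_{2\nu+1}(m/n)$ is by definition $h_{\nu;\nu_0}(m/n)$, and dividing through by $n^{2(\nu-\nu_0)}$ yields the claimed identity. To make this rigorous I would also note that $\gamma_{2\nu+1}(m/n)$ is finite and strictly positive for $\nu > 0$ and $m/n \in (0,1)$, so the division is legitimate.

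The only point that needs care is that a factor $1 + \mathcal{O}(m^{-2})$ sits in a denominator after taking the quotient, so I must check it is bounded away from zero uniformly: here I would invoke the inequalities $-1 < v_m \leq u_{n,m,j}(\nu,\alpha) \leq 0$ from Lemma~\ref{lem:bound_c_rho} together with $v_m \to 0$, which give $\inf_{m \geq 1}(1+v_m) > 0$; consequently the quotient of the two $(1+\mathcal{O}(m^{-2}))$ factors is again $1+\mathcal{O}(m^{-2})$ uniformly in $(\nu,\alpha) \in N \times A$. This uniform control of the correction factor under inversion is the main (and only mild) obstacle; the rest is bookkeeping already essentially carried out in the proofs of Lemmas~\ref{lem:rough_bound_c} and~\ref{lem:bound_c_rho}.
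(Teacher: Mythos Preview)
Your proposal is correct and follows essentially the same route as the paper: apply Lemma~\ref{lem:bound_c_rho} to write each of $\lambda_{m,n}$ and $\lambda_{m,n}^0$ as $n^{-(2\cdot+1)}\gamma_{2\cdot+1}(m/n)(1+\mathcal{O}(m^{-2}))$ and take the quotient. The paper's proof is terser and leaves the ``elementary manipulations'' implicit, whereas you spell out the one nontrivial point---that the denominator correction $1+\mathcal{O}(m^{-2})$ is uniformly bounded away from zero via $-1<v_m\to 0$---which is a welcome addition.
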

\begin{proof}
A direct consequence from Lemma~\ref{lem:bound_c_rho}.
\end{proof}

\begin{lem}\label{lem:link_exp_norm_h}
Let $0 < \epsilon < 1/2$ and $0 < \delta < 2\epsilon$. There exists a constant $C$
such that
$$
\limsup n^{2\epsilon - \delta}
\sup_{(\nu, \alpha) \in N_{\epsilon} \times A} \frac{1}{n }
\sum_{m = 1}^{n - 1} 
U_{m, n}^2
\left|  \frac{\lambda_{m, n}^{(0)}}{n^{2(\nu - \nu_0)} \lambda_{m, n}} - h_{\nu; \nu_0} \left( \frac{m}{n} \right) \right| \leq C,
$$
almost surely.
\end{lem}
\begin{proof}
Let $\nu \in N_\epsilon$, $\alpha \in A$, and
$p = 1/(1 - 2\epsilon + \delta)$.
We have
$n^{-1} \sum_{m = 1}^{n-1} F_{\epsilon}^p (m/n) = \Ocal(1)$.
Then, 
Lemma~\ref{lem:h_unif_alpha}, the usual symmetry arguments, and H\"older inequality
yield:
\begin{align*}
 & \frac{1}{n }
\sum_{m = 1}^{n-1} U_{m, n}^2 \left|  \frac{\lambda_{m, n}^{(0)}}{n^{2(\nu - \nu_0)} \lambda_{m, n}} - h_{\nu; \nu_0} \left( \frac{m}{n} \right) \right|\\
& \, = \,
\frac{1}{n}
 \sum_{m = 1}^{n-1} U_{m, n}^2 \Ocal \left( m^{-2} \vee (n - m)^{-2} \right) h_{\nu; \nu_0}\left( \frac{m}{n} \right)\\
& \, \leq \,
\frac{1}{n^{1/q}} 
\cdot
\left(
\frac{1}{n} \sum_{m = 1}^{n-1} \left| U_{m, n} \right|^{2p} F_{\epsilon}^p(m/n)
\right)^{1/p}
\cdot
\underbrace{ \left(
\sum_{m = 1}^{n-1} \Ocal \left( m^{-2q} \vee (n - m)^{-2q} \right)
\right)^{1/q}}_{\Ocal(1) \ \mathrm{uniformly}}
\end{align*}
with $1/q = 2\epsilon - \delta$.
Conclude using
Lemma~\ref{lem:riemann_sum_bound} and $n^{-1} \sum_{m = 1}^{n-1} F_{\epsilon}^p (m/n) = \Ocal(1)$.
\end{proof}

For $n \geq 2$ and $1 \leq m \leq n - 1$, define $B_{m, n} = U_{m, n}^2 - 1$.
\begin{lem}\label{lem:marginal_full_convergence}
Let $\nu > \nu_0 - 1/2$. Then,
$n^{-1} \sum_{m = 1}^{n - 1} B_{m, n} h_{\nu ; \nu_0} \left( m/n \right)$
converges almost surely to zero.
\end{lem}
\begin{proof}
By Lemma~\ref{lem:riemann_sum_bound}, since
$0 \leq h_{\nu ; \nu_0}(x) \lesssim \min \left(x, \, 1 - x\right)^{2(\nu-\nu_0)}$.
\end{proof}

\begin{lem}\label{lem:riemann_sum_bound}
Let $\alpha > - 1$ and $g\colon \left(0, \, 1\right) \to \RR$ a function
such that the inequality $0 \leq g(x) \lesssim \min \left(x, \, 1 -x \right)^{\alpha}$
holds.
For each~$n$, let~$D_{1, n}, \dots, D_{n-1, n}$ be i.i.d. centered variables
such that~$\EE ( \left| D_{1, 2} \right|^q )$
is finite for all~$q \geq 0$.
Then, $n^{-1} \sum_{m = 1}^{n - 1} D_{m, n} g \left( m/n \right)$
converges almost surely to zero.
\end{lem}
\begin{proof}
If $\alpha \geq 0$, then~$g \left( m/n \right) = \Ocal(1)$, so
the result is given by \citep[Corollary~5]{taylor1987:_slln}.
Otherwise if $\alpha < 0$, then let $0 <\delta < 1/2$. It holds that:
$$
\left| \frac{1}{n} \sum_{m = 1}^{n - 1} D_{m, n} g \left( \frac{m}{n} \right) \right|\\
\leq
\left| \frac{1}{n} \sum_{m = 1}^{n - 1}
\underbrace{
g \left( \frac{m}{n} \right)
\one_{\floor*{\delta n} + 1 \leq m \leq n - \floor*{\delta n} - 1}
}_{\lesssim \delta^{\alpha}}
 D_{m, n}  \right|
$$
$$
+
\left|
\frac{1}{n} \sum_{m = 1}^{n - 1} \left(\one_{m \leq \floor*{\delta n}}
+ \one_{m \geq n - \floor*{\delta n} }
\right) D_{m, n} g\left( \frac{m}{n} \right) \right|.
$$
The first term converges almost surely to zero by \citep[Corollary~5]{taylor1987:_slln}. For the second term,
H\"older inequality gives (a multiple of) the bound:
$$
\left( \frac{1}{n} \sum_{m = 1}^{n - 1} 
\left| D_{m, n} \right|^q \right)^{1/q}
\cdot
\left( \frac{2}{n} \sum_{m = 1}^{\floor*{\delta n}} \left( \frac{m}{n} \right)^{p \alpha} \right)^{1/p}.
$$
The first term converges almost surely to the $q$-norm of the $D_{m, n}$ by the previous reference
and, for~$p$ close enough to one, the second is
$\Ocal ( \delta^{\alpha + 1/p} )$ with $\alpha + 1/p > 0$.
Take $\delta = 1/j$ and a countable intersection of almost sure events to conclude.
\end{proof}

\begin{lem}\label{lem:stoch_eq}
Let $0 < \epsilon < 1/2$ and define
$$g_n\colon
\nu \in N_{\epsilon} \mapsto \frac{1}{n }\sum_{m = 1}^{n - 1} B_{m, n} h_{\nu ; \nu_0} \left( \frac{m}{n} \right).$$
The sequence $(g_n)_{n \geq 2}$
is almost surely uniformly equicontinuous.
\end{lem}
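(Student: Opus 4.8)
The plan is to control the modulus of continuity of $g_n$ by splitting the defining sum at the two endpoints of $(0,\,1)$: a \emph{boundary} part, collecting the indices $m$ with $m/n<a$ or $m/n>1-a$, to be handled crudely via the integrable envelope of $\{h_{\nu;\nu_0}\}_{\nu\in N_\epsilon}$; and a \emph{bulk} part on the fixed compact interval $[a,\,1-a]$, where $h_{\nu;\nu_0}$ is uniformly Lipschitz in $\nu$. The boundary part will be small uniformly in $n$ and in the localization radius $\delta$ once $a$ is chosen small, and the bulk part will be $\Ocal(\delta)$ in probability with a constant not depending on $n$. The one delicate point is the order of limits: one first fixes $a$ (hence the bulk Lipschitz constant), thereby killing the boundary term, and only then lets $\delta\to0$.

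In detail, fix $\eta,\,\eta'>0$. From \eqref{eq:h_unif_eq} (cf.\ the proof of Lemma~\ref{lem:ep}) one has, for a suitable $C>0$, the envelope $F_\epsilon(x)=C\min(x,\,1-x)^{-1+2\epsilon}$ with $0\le h_{\nu;\nu_0}\le F_\epsilon$ on $(0,\,1)$ for every $\nu\in N_\epsilon$; note $F_\epsilon$ is nonincreasing on $(0,\,1/2]$, symmetric about $1/2$, and in $L^1(0,\,1)$ since $\epsilon>0$. Put $\mathcal{B}_{a,n}=\{1\le m\le n-1:\ m/n<a\ \text{or}\ m/n>1-a\}$. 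Writing $g_n(\nu_1)-g_n(\nu_2)=n^{-1}\sum_{m=1}^{n-1}B_{m,n}\bigl(h_{\nu_1;\nu_0}-h_{\nu_2;\nu_0}\bigr)(m/n)$ and using $\abs{h_{\nu_1;\nu_0}-h_{\nu_2;\nu_0}}\le 2F_\epsilon$ together with $\EE\abs{B_{m,n}}=\EE\abs{B_{0,1}}<\infty$, I would bound the expected supremum over all $\nu_1,\,\nu_2\in N_\epsilon$ of the boundary part by $2\EE\abs{B_{0,1}}\,n^{-1}\sum_{m\in\mathcal{B}_{a,n}}F_\epsilon(m/n)$, and then use the standard Riemann-sum comparison for the nonincreasing $F_\epsilon$ on $(0,\,1/2]$ together with its symmetry to get $n^{-1}\sum_{m\in\mathcal{B}_{a,n}}F_\epsilon(m/n)\le 2\int_0^aF_\epsilon$ for $n$ large, so the whole bound is $\le\rho(a):=4\EE\abs{B_{0,1}}\int_0^aF_\epsilon\to0$ as $a\to0$. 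Markov's inequality then makes the boundary supremum exceed $\eta/2$ with probability $\le 2\rho(a)/\eta$, uniformly over all $\delta>0$ and all large $n$; choose $a$ so that this is $<\eta'/2$.

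With $a$ fixed, a compactness argument---$\gamma$, hence $h_{\nu;\nu_0}=\gamma(2\nu_0+1;\cdot)/\gamma(2\nu+1;\cdot)$, is $C^\infty$ in both arguments---gives $L_a:=\sup_{\nu\in N_\epsilon,\,x\in[a,\,1-a]}\abs{\partial_\nu h_{\nu;\nu_0}(x)}<\infty$, so $\abs{h_{\nu_1;\nu_0}(x)-h_{\nu_2;\nu_0}(x)}\le L_a\abs{\nu_1-\nu_2}$ on $[a,\,1-a]$. Hence the supremum over $\abs{\nu_1-\nu_2}<\delta$ of the bulk part is at most $\delta L_a\,n^{-1}\sum_{m=1}^{n-1}\abs{B_{m,n}}$, whose expectation is $\le\delta L_a\,\EE\abs{B_{0,1}}$; by Markov the bulk supremum exceeds $\eta/2$ with probability $\le 2\delta L_a\,\EE\abs{B_{0,1}}/\eta$ for every $n$, and I pick $\delta$ so that this is $<\eta'/2$. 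Since $\{\sup_{\abs{\nu_1-\nu_2}<\delta}\abs{g_n(\nu_1)-g_n(\nu_2)}>\eta\}$ is contained in the union of the two events above, combining yields $\limsup_n\PP\bigl(\sup_{\abs{\nu_1-\nu_2}<\delta}\abs{g_n(\nu_1)-g_n(\nu_2)}>\eta\bigr)<\eta'$, i.e.\ asymptotic uniform $\abs{\cdot-\cdot}$-equicontinuity in probability; measurability of the suprema follows from continuity of $\nu\mapsto h_{\nu;\nu_0}$ and separability of $N_\epsilon$. The only real obstacle is this bookkeeping---keeping the boundary bound uniform in $n$ and in $\delta$, and ordering the limits $a\to0$ then $\delta\to0$ correctly---and it is precisely why only $L^1$ integrability of $F_\epsilon$ is needed here, so that the statement holds for every $\epsilon\in(0,\,1/2)$ and no entropy condition (as in Lemma~\ref{lem:ep}) is required.
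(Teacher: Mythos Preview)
Your argument is correct and takes a genuinely different route from the paper's. The paper obtains a \emph{global} Lipschitz bound: it shows that $\left|\partial_\nu h_{\nu;\nu_0}(m/n)\right|\lesssim (n/m)^{1-2\epsilon+2\delta}$ uniformly in $\nu\in N_\epsilon$ and $1\le m\le\lfloor n/2\rfloor$, and then applies H\"older's inequality with exponents $p,q$ chosen so that $p(1-2\epsilon+2\delta)<1$, yielding
\[
|g_n(\nu_1)-g_n(\nu_2)|\le\Bigl(\tfrac1n\sum_m |B_{m,n}|^q\Bigr)^{1/q}\Bigl(\tfrac1n\sum_m \sup_\nu|\partial_\nu h_{\nu;\nu_0}(m/n)|^p\Bigr)^{1/p}|\nu_1-\nu_2|,
\]
with both factors $O_P(1)$. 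Your boundary/bulk decomposition instead avoids any explicit global derivative estimate: near the endpoints you only use the $L^1$ envelope $F_\epsilon$ (so no $p$th-power integrability is needed), and on $[a,1-a]$ a soft compactness argument gives a finite Lipschitz constant. The trade-off is that the paper's approach produces a single clean random Lipschitz constant at the cost of a sharper pointwise derivative bound and a H\"older trick, while yours is more elementary but requires the careful ordering of limits (first $a\to0$, then $\delta\to0$) that you correctly highlight. Both arguments cover the full range $0<\epsilon<1/2$.
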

\begin{proof}
Lemma~\ref{lem:rough_gamma_der_behaviour}
shows
that
$$
\left| \frac{\partial \gamma}{\partial s} \left(2\nu + 1; \, x\right) \right|
\lesssim - x^{-2\nu - 1} \ln(x)  
\lesssim
x^{-2(\nu + \delta) - 1}
\quad
(\mathrm{with \ the \ notation} \ \gamma \left(s; \, x\right)),
$$
holds uniformly in $x \in \left(0, \, 1/2\right]$ and $\nu \in N_\epsilon$,
for any~$\delta > 0$. 
With a slight abuse of notation, the latter fact and Lemma~\ref{lem:rough_gamma_behaviour}
yield:
\begin{equation}\label{eq:kappa_bound}
\left| \frac{\partial h_{\nu_0}}{\partial \nu}\left(\nu; \, m/n \right) \right|
\lesssim \left( \frac{
n
}{
m
} \right)^{1 - 2\epsilon + 2\delta},
\end{equation}
uniformly in $n$, $1 \leq m \leq \floor*{n/2}$, and $\nu \in N_{\epsilon}$.

Now let $\nu_1, \, \nu_2 \in N_{\epsilon}$. If one chooses $p > 1$ and $\delta > 0$ such that $p(1 - 2\epsilon + 2\delta) < 1$,
then we have by H\"older's inequality with $1/q + 1/p = 1$
$$
\left| g_n(\nu_{1}) - g_n(\nu_{2}) \right|
$$
$$
\leq
\left(\frac{1}{n} \sum_{m = 1}^{n - 1} \left| B_{m, n} \right|^q \right)^{1/q}  
\cdot 
\underbrace{
\left(\frac{1}{n} \sum_{m = 1}^{n - 1} \sup_{\nu \in N_{\epsilon} } \left| \frac{\partial h_{\nu_0}}{\partial \nu}\left(\nu; \, m/n \right)  \right|^p  \right)^{1/p}
}_{\Ocal(1) \ \mathrm{by} \ \eqref{eq:kappa_bound}}
\cdot
\abs{\nu_{1}  - \nu_{2} }.
$$
Use \citep[Corollary~5]{taylor1987:_slln} to conclude.
\end{proof}

\subsection{Proof of Theorem~\ref{thm:asympt_norm}}\label{sec:proof_asympt_norm}

\subsubsection{An upper bound of the rate}
\begin{lem}\label{lem:consitency_plus_rate}
Let $0 < \beta < 1/4$.
It holds that $\hat{\nu}_n - \nu_0 = o_{\PP} \left( n^{-\beta} \right)$ and
$\hat{\phi}_n - \phi_0 = o_{\PP} \left( n^{-\beta} \right)$.
\end{lem}
\begin{proof}
Let $0 <\beta < 1/4$, Proposition~\ref{prop:profile_lik} gives almost surely
$$
\ln \left( \hat{\phi}_n \right)
=  \ln \left( \phi_0 \right) + \ln \left( \frac{\phi_0^{-1} Z\tr R_{\hat{\nu}_n, \hat{\alpha}_n}^{-1} Z}{n^{1 + 2(\hat{\nu}_n - \nu_0)}} \right) + 2(\hat{\nu}_n - \nu_0) \ln(n).
$$
So
\begin{align*}
& \frac{n^{\beta}}{\ln(n)} \left( \ln \left( \hat{\phi}_n \right) - \ln \left( \phi_0 \right) \right) \\
& \, = \, \frac{n^{\beta}}{\ln(n)}  \ln \left( H \left( \hat{\nu}_n \right) \right) 
+ \frac{n^{\beta}}{\ln(n)} \left(\ln \left( \frac{\phi_0^{-1} Z\tr R_{\hat{\nu}_n, \hat{\alpha}_n}^{-1} Z}{n^{1 + 2(\hat{\nu}_n - \nu_0)}} \right) - \ln \left( H \left( \hat{\nu}_n \right)  \right) \right) \\
& \, + \, 2n^{\beta} (\hat{\nu}_n - \nu_0).
\end{align*}
The latter converges to zero in probability thanks to the coordination of~\eqref{eq:weak_conv_log}
with Slutsky's lemma in $L^{\infty}(N_{\epsilon} \times A)$ \citep[p. 32]{van1996:_weak},
Lemma~\ref{lem:bound_rate_nu},
and the univariate delta method since
the mapping $\ln \circ H$ is smooth.
This implies that
$\ln ( \hat{\phi}_n ) - \ln \left( \phi_0 \right) = o_{\PP} \left( n^{-\beta} \right)$
for all~$0 <\beta < 1/4$.
Conclude using again the delta method.
\end{proof}

\begin{lem}\label{lem:bound_rate_nu}
Let $0 < \beta < 1/4$. The bound $\hat{\nu}_n - \nu_0 = o_{\PP} \left( n^{-\beta} \right)$ holds in probability.
\end{lem}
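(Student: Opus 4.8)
The plan is to upgrade the consistency of Lemma~\ref{lem:consistency_nu} to a convergence rate by combining a nondegenerate quadratic lower bound for the limiting profiled criterion at $\nu_0$ with the $n^{-1/2}$ uniform fluctuation bound of Lemma~\ref{lem:dev_pnll}. This will actually give the stronger bound $\hat{\nu}_n - \nu_0 = O_P(n^{-1/4})$, which implies the claim since $n^{\beta - 1/4} \to 0$ for every $\beta < 1/4$. Fix $2/5 < \epsilon < 1/2$ once and for all, so that Lemma~\ref{lem:dev_pnll} applies on $N_\epsilon = [\nu_0 - 1/2 + \epsilon, \, b_\nu]$ and $\nu_0$ lies in $N_\epsilon$, away from its left endpoint; write $\Uset(\nu) = \int_0^1 g_\nu + \ln(\int_0^1 h_{\nu; \nu_0})$ for the limiting criterion appearing in the proof of Lemma~\ref{lem:consistency_nu}, and recall that $\Uset_n(\nu) = \inf_{\alpha \in A} \Mset_n(\nu, \alpha)$ is minimised over $N$ by $\hat{\nu}_n$ (this follows from Lemma~\ref{lem:profile_lik} and the definition of $\hat{\theta}_n$, since profiling out $\phi$ turns the likelihood into $\Mset_n$ up to an additive constant).

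First I would establish the deterministic curvature estimate: there are constants $c, \rho > 0$, depending only on $\nu_0$, such that $\Uset(\nu) - \Uset(\nu_0) \geq c(\nu - \nu_0)^2$ for all $\nu \in N$ with $|\nu - \nu_0| \leq \rho$. Using $\int_0^1 \ln h_{\nu; \nu_0} = \int_0^1 g_{\nu_0} - \int_0^1 g_\nu$, one rewrites $\Uset(\nu) - \Uset(\nu_0) = \ln(\int_0^1 h_{\nu; \nu_0}) - \int_0^1 \ln h_{\nu; \nu_0}$, a Jensen gap that is nonnegative and vanishes only at $\nu = \nu_0$ (exactly the minimality used in Lemma~\ref{lem:consistency_nu}). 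I would then differentiate this gap twice in $\nu$ near $\nu_0$ under the integral sign: since $\partial_\nu \ln h_{\nu; \nu_0} = 2\psi_\nu$ and $\partial_\nu h_{\nu; \nu_0} = 2\psi_\nu h_{\nu; \nu_0}$, the uniform behaviour of $h_{\nu; \nu_0}$ near $x \in \{0, 1\}$ recorded in \eqref{eq:h_unif_eq}, together with the (square) integrability of $\psi_\nu$ and of $\partial_\nu \psi_\nu$, yields $L^1$-dominating functions valid for $\nu$ in a small enough neighbourhood of $\nu_0$. The first derivative of the gap vanishes at $\nu_0$ and the second equals $4\var(\psi_{\nu_0}(U)) = 4V(\nu_0) > 0$, the positivity coming from $\psi_{\nu_0}$ being non-constant; a Taylor expansion then gives the quadratic lower bound on a fixed neighbourhood of $\nu_0$.

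Next I would collect the stochastic ingredients and conclude by a standard $M$-estimation argument. By Lemma~\ref{lem:dev_pnll} the rescaled process $\sqrt{n}(\Mset_n(\cdot, \cdot) - \Uset(\cdot))$ converges weakly in $L^\infty(N_\epsilon \times A)$, hence is asymptotically tight, so $\sup_{N_\epsilon \times A} |\Mset_n(\nu, \alpha) - \Uset(\nu)| = O_P(n^{-1/2})$; since $\Uset$ does not depend on $\alpha$ and $|\inf_\alpha a_\alpha| \leq \sup_\alpha |a_\alpha|$, the same bound holds for $\sup_{\nu \in N_\epsilon} |\Uset_n(\nu) - \Uset(\nu)|$. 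Now fix $\delta > 0$. For $M$ and $n$ large enough the event $E_n = \{\sup_{N_\epsilon} |\Uset_n - \Uset| \leq M n^{-1/2}\} \cap \{|\hat{\nu}_n - \nu_0| \leq \rho\}$ has probability at least $1 - \delta$, by the previous display and Lemma~\ref{lem:consistency_nu}. On $E_n$, using that $\hat{\nu}_n$ minimises $\Uset_n$ over $N \ni \nu_0$ and that $\nu_0, \hat{\nu}_n \in N_\epsilon$, one gets $\Uset(\hat{\nu}_n) - M n^{-1/2} \leq \Uset_n(\hat{\nu}_n) \leq \Uset_n(\nu_0) \leq \Uset(\nu_0) + M n^{-1/2}$, hence $\Uset(\hat{\nu}_n) - \Uset(\nu_0) \leq 2M n^{-1/2}$; the curvature estimate then forces $c(\hat{\nu}_n - \nu_0)^2 \leq 2M n^{-1/2}$, i.e. $|\hat{\nu}_n - \nu_0| \leq \sqrt{2M/c}\, n^{-1/4}$ on $E_n$. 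Since $\delta$ was arbitrary this shows $\hat{\nu}_n - \nu_0 = O_P(n^{-1/4})$, hence $= o_P(n^{-\beta})$ for every $\beta < 1/4$.

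The main obstacle is the curvature estimate: one must rigorously justify differentiating the Jensen gap twice under the integral sign despite the endpoint singularities of $h_{\nu; \nu_0}$ and of its $\nu$-derivatives, and identify the second derivative at $\nu_0$ as the positive quantity $4V(\nu_0)$. This is the one place where the uniform-in-$\nu$ asymptotics near $x = 0$ and $x = 1$ are genuinely needed; everything else is the routine argument that a well-separated quadratic minimum plus an $O_P(n^{-1/2})$ uniform perturbation of the criterion yields an $O_P(n^{-1/4})$ rate.
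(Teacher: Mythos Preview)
Your proposal is correct and follows essentially the same route as the paper: both use the $O_P(n^{-1/2})$ uniform fluctuation of $\Mset_n$ around $\Uset$ on $N_\epsilon\times A$ supplied by Lemma~\ref{lem:dev_pnll}, establish that $\Uset$ has a nondegenerate minimum at $\nu_0$ with $\Uset''(\nu_0)>0$, and convert via a Taylor/quadratic argument to $\hat{\nu}_n-\nu_0=O_P(n^{-1/4})$. Your identification $\Uset''(\nu_0)=4V(\nu_0)$ is exactly the paper's expression $\int_0^1(\tilde{\gamma}_{2\nu_0+1}/\gamma_{2\nu_0+1})^2-(\int_0^1\tilde{\gamma}_{2\nu_0+1}/\gamma_{2\nu_0+1})^2$, since $\tilde{\gamma}_{2\nu+1}/\gamma_{2\nu+1}=-2\psi_\nu$; the Jensen-gap rewriting you use is a pleasant way to see both the minimality and the curvature in one stroke, but the content is identical.
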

\begin{proof}
Let $1/4 < \epsilon < 1/2$ and $0 < \beta < 1/2$ and use the notations from Appendix~\ref{sec:more_notations}.
Lemma~\ref{lem:dev_pnll} implies
$
\sup_{\nu \in N_{\epsilon}} \left| \Uset_n \left( \nu \right) - \Uset \left( \nu \right) \right|
= o_{\PP} \left( n^{-\beta}\right)
$.
Moreover, the function $\Uset$ is $C^{3}$-smooth and we have $\Uset^{\prime}(\nu_0) = 0$
and, with the notation given by~\eqref{eq:def_psi}:
$$
\Uset^{\prime \prime}(\nu_0) =
4 \left(
\int_0^1 \left( \psi_{\nu_0}  \right)^2
- \left( \int_0^1 \psi_{\nu_0} \right)^2
\right) > 0,
$$
thanks to Jensen inequality.
Finally, Theorem~\ref{thm:consistency_nu} and
a second-order Taylor expansion around~$\nu_0$
give the rate $n^{- \beta/2}$.
\end{proof}

\begin{lem}\label{lem:dev_pnll}
Let $1/4 < \epsilon < 1/2$. Then, the sequence
$$
\left(\nu, \, \alpha\right) \in N_{\epsilon} \times A \mapsto
\sqrt{n} \left( \Mset_n \left( \nu, \alpha \right)
- \int_{0}^{1} g_\nu
- \ln \left( \int_{0}^{1} h_{\nu; \nu_0}  \right) \right)
$$
of processes converges weakly in $L^{\infty}(N_{\epsilon} \times A)$ to
\begin{equation}\label{eq:lim_gp_delta_meth}
\mathrm{GP} \left( 0,
\left(\nu_1, \, \alpha_1; \, \nu_2, \, \alpha_2\right) \mapsto
\frac{2\int_0^1 h_{\nu_1; \nu_0} h_{\nu_2; \nu_0} }{\int_0^1 h_{\nu_1; \nu_0} \int_0^1 h_{\nu_2; \nu_0}} \right)
\end{equation}
which can be seen as a tight Borel probability measure.
In particular, for all $\beta < 1/2$, we have
$$
\sup_{\nu \in N_\epsilon, \alpha \in A} \left| \Mset_n \left( \nu, \alpha \right)
- \int_{0}^{1} g_\nu
- \ln \left( \int_{0}^{1} h_{\nu; \nu_0}  \right) \right| = o_{\PP}\left( n^{-\beta} \right).
$$
\end{lem}
\begin{proof}
Use the notation~$H \colon \left(\nu, \, \alpha\right) \in N_{\epsilon} \times A \mapsto \int_{0}^{1} h_{\nu; \nu_0}$
for this proof.

Let $\Dset_{\psi} \subset L^{\infty}(N_{\epsilon} \times A)$ be
the subset of positive functions bounded away from zero.
One has
$H \in \Dset_{\psi}$ and
$
\left(\nu, \, \alpha\right) \in N_{\epsilon} \times A \mapsto n^{- 1 - 2(\nu - \nu_0)} \phi_0^{-1} Z\tr R_{\nu, \alpha}^{-1} Z 
$
lying also in~$\Dset_{\psi}$
almost surely by continuity on the compact $N_{\epsilon} \times A$.

Furthermore, the mapping
$\psi: g \in \Dset_{\psi} \subset L^{\infty}(N_{\epsilon} \times A) \mapsto \ln \circ g \in L^{\infty}(N_{\epsilon} \times A)$ is Fr\'echet-differentiable
at~$H$ with $\psi^{\prime}(H): g \in L^{\infty}(N_{\epsilon} \times A) \mapsto g/H
\in L^{\infty}(N_{\epsilon} \times A)$.
The weak limit given by Lemma~\ref{lem:link_norm_h} is tight and hence separable,
so we can use Theorem~3.9.4 from \cite{van1996:_weak}
to show that
\begin{equation}\label{eq:weak_conv_log}
\sqrt{n} \left( \ln \left(  \frac{\phi_0^{-1} Z\tr R_{\nu, \alpha}^{-1} Z}{n^{1 + 2(\nu - \nu_0)}} \right)
- \ln \left( \int_0^1 h_{\nu; \nu_0}\right) \right) 
\end{equation}
converges weakly to~\eqref{eq:lim_gp_delta_meth}
in $L^{\infty}(N_{\epsilon} \times A)$.
The tightness of the limit follows from the continuity of~$\psi^{\prime}(H)$.
Conclude with
Proposition~\ref{prop:profile_lik},
Lemma~\ref{lem:log_det}, and Slutsky's lemma.
\end{proof}

\begin{lem}\label{lem:link_norm_h}
Let $1/4 < \epsilon < 1/2$. The sequence
$$
\left(\nu, \, \alpha\right) \in N_{\epsilon} \times A \mapsto
\sqrt{n} \left( \frac{\phi_0^{-1} Z\tr R_{\nu, \alpha}^{-1} Z}{n^{1 + 2(\nu - \nu_0)}} - \int_0^1 h_{\nu; \nu_0} \right)
$$
of processes converges weakly in $L^{\infty}(N_{\epsilon} \times A)$ to
$$
\mathrm{GP} \left( 0, \left(\nu_1, \, \alpha_1; \, \nu_2, \, \alpha_2\right) \mapsto 2 \int_0^1 h_{\nu_1; \nu_0} h_{\nu_2; \nu_0} \right),
$$
which can be seen as a tight Borel probability measure.
\end{lem}
\begin{proof}
Using the continuous mapping theorem for the isometry
$
\rho \colon L^{\infty}(N_{\epsilon})
\to L^{\infty}(N_{\epsilon} \times A)
$
mapping $g \in L^{\infty}(N_{\epsilon})$ to the function $\left(\nu, \, \alpha\right) \in N_{\epsilon} \times A \mapsto g(\nu)$
makes it possible
to rephrase the convergence given by Lemma~\ref{lem:ep}
in $L^{\infty}(N_{\epsilon} \times A)$.
(The limit~\eqref{eq:lim_sum_h} is a tight and hence separable measure.)
The rest of the proof is similar to the analysis of~\eqref{eq:cv_norm_unif} in the proof of Lemma~\ref{lem:dev_pnll_full}, but using~$\epsilon > 1/4$.
\end{proof}

\begin{lem}\label{lem:vc}
Let $1/4 < \epsilon < 1/2$. 
The family $\mathcal{F}_{\epsilon}$ of functions
equiped with the envelope~$F_{\epsilon}$ defined by~\eqref{eq:envelope_def}
verifies the uniform entropy condition \citep[][Section 2.5.1]{van1996:_weak}.
\end{lem}
\begin{proof}
For $x \in \left(0, \, 1\right)$ and $\nu \in N$,
write
$$\gamma \left( 2\nu + 1; \, x\right) = 
\gamma_{\uparrow} \left( 2\nu + 1; \, x\right)
+
\gamma_{\downarrow} \left( 2\nu + 1; \, x\right),$$
with
$$\gamma_{\downarrow} \left( 2\nu + 1; \, x\right)
= \sum_{j = 1}^{+\infty} (j + x)^{-2\nu - 1}
+ \sum_{j = 1}^{+\infty} (j + 1 - x)^{- 2\nu - 1},$$
and~$\gamma_{\uparrow} \left( 2\nu + 1; \, x\right)
= x^{-2\nu - 1}
+ (1 - x)^{- 2\nu - 1}$.
Let
$$h_{\uparrow} (\nu; \, x) =
\frac{\gamma \left(2\nu_0 + 1, \, x \right)}{
\gamma_{\downarrow} \left(2\nu + 1, \, x \right)}
\
\mathrm{and}
\
h_{\downarrow}(\nu; \, x) =
\frac{\gamma \left(2\nu_0 + 1, \, x \right)}{
\gamma_{\uparrow} \left(2\nu + 1, \, x \right)}.\footnote{
The symbols~$\downarrow$ and~$\uparrow$ account for the monotonicity
with respect to~$\nu$ for fixed~$x$.
}$$
The families
$\mathcal{F}_{\epsilon}^{\downarrow}
= \left\lbrace
h_{\downarrow} (\nu; \, \cdot) , \, \nu \in N_{\epsilon} \right\rbrace$ and
$\mathcal{F}_{\epsilon}^{\uparrow}
= \left\lbrace
1/h_{\uparrow} (\nu; \, \cdot)
, \, \nu \in N_{\epsilon} \right\rbrace$
of functions
are non-increasing with respect to the parameter~$\nu$ so
they are VC-subgraph classes.
Indeed, let $(x_1, y_1), (x_2, y_2) \in (0, 1) \times \RR$,
there cannot be two functions $f$ and $g$ in one of these families such that
$f(x_1) < y_1$, $f(x_2) \geq y_2$, $g(x_1) \geq y_1$, and $g(x_2) < y_2$,
since we have either $g \leq f$ or $f \leq g$.

Equip
$\mathcal{F}_{\epsilon}^{\downarrow}$ and
$\mathcal{F}_{\epsilon}^{\uparrow}$
respectively with the
envelopes $F_{\epsilon}$ (by increasing eventually the constant~$C$ in~\eqref{eq:envelope_def})
and $F_{\epsilon}^{\uparrow}\colon x \in \left(0, \, 1\right) \mapsto C_2 \min(x, \, 1 - x)^{1 + 2\nu_0}$, for some constant $C_2 > 0$.
Theorem 2.6.7 from \citep{van1996:_weak} shows that these families satisfy
the uniform entropy condition.

Consider
$\varsigma \colon x, y \in \left(0, + \infty\right) \mapsto \left( x^{-1} + y \right)^{-1}$.
It holds that $\left| \frac{\partial \varsigma}{\partial x}(x, \, y) \right| \leq 1$
and $\left| \frac{\partial \varsigma}{\partial y} (x, \, y) \right| = \varsigma^2 (x, \, y)$.
Observe that
$\varsigma \left(
h_{\downarrow} (\nu_1; \, \cdot), \,
1/h_{\uparrow} (\nu_2; \, \cdot) \right) \lesssim F_{\epsilon}$,
for $\nu_1, \, \nu_2 \in N_{\epsilon}$.
Consequently, for $\nu_1, \nu_2, \nu_3, \nu_4 \in N_{\epsilon}$ and $x \in \left(0, \, 1\right)$, we have:
$$
\left( \varsigma \left(
h_{\downarrow} (\nu_1; \, x), \,
1/h_{\uparrow} (\nu_3; \, x) \right) -
\varsigma \left(
h_{\downarrow} (\nu_2; \, x), \,
1/h_{\uparrow} (\nu_4; \, x) \right)
\right)^2$$
$$
\lesssim \left(
h_{\downarrow} (\nu_1; \, x)
- 
h_{\downarrow} (\nu_2; \, x)
 \right)^2
+ F_{\epsilon}^4 (x) \left(
\frac{1}{h_{\uparrow} (\nu_3; \, x)}
- 
\frac{1}{h_{\uparrow} (\nu_4; \, x)}
 \right)^2.
$$
Observe that
$\varsigma \left(
h_{\downarrow} (\nu; \, \cdot), \,
1/h_{\uparrow} (\nu; \, \cdot) \right) = h_{\nu; \nu_0}$
and use Theorem~2.10.20 from \citep{van1996:_weak}
to conclude that the family
$$\mathcal{F}_{\epsilon}^{(\pi)} = \left\lbrace \varsigma \left(
h_{\downarrow} (\nu_1; \, \cdot), \,
1/h_{\uparrow} (\nu_2; \, \cdot) \right)-1, \ \nu_1, \nu_2 \in N_{\epsilon}
\right\rbrace
\quad
\left( \mathrm{note \ that} \ 
h_{\nu_0;\nu_0} = 1
 \right)$$
with envelope $F_{\epsilon}^{(\pi)}  = 2\sqrt{F_{\epsilon}^2 + F_{\epsilon}^4 ( F_{\epsilon}^{\uparrow})^2}$ satisfy
the uniform entropy condition.
Concluding the proof is straightforward
since~$\mathcal{F}_\epsilon \subset \mathcal{F}_{\epsilon}^{(\pi)}  + 1$
and~$F_{\epsilon}^{(\pi)} \lesssim F_\epsilon$.
\end{proof}

\begin{lem}\label{lem:L2_dist}
For all $\epsilon > 1/4$, we have
$$
\frac{1}{n} \sum_{m = 1}^{n-1}
\left( h_{\nu_1; \nu_0} \left( m / n \right) - h_{\nu_2; \nu_0} \left( m / n \right) \right)^2
\to \int_0^1 \left( h_{\nu_1; \nu_0} - h_{\nu_2; \nu_0} \right)^2,
$$
uniformly in $\nu_1, \nu_2 \in N_{\epsilon}$.
\end{lem}
\begin{proof}
Let $\delta > 0$, there exists $\alpha > 0$ such that:
$$
\int_{0}^{\alpha}  \left( h_{\nu_1; \nu_0} - h_{\nu_2; \nu_0} \right)^2
\leq 4 \int_{0}^{\alpha}  F_{\epsilon}^2 \leq \delta/5
$$
and
$$
\frac{1}{n} \sum_{m = 1}^{\floor*{\alpha n}}  \left( h_{\nu_1; \nu_0} \left( m/n \right) - h_{\nu_2; \nu_0} \left( m/n \right) \right)^2
\leq \frac{4}{n} \sum_{m = 1}^{\floor*{\alpha n}}  F_{\epsilon}^2 \left( m/n \right)
\leq \delta/5,
$$
uniformly in $\nu_1, \nu_2 \in N_{\epsilon}$.
The same bounds also hold by symmetry for similar quantities related to $\left[1 - \alpha, \, 1\right]$.
Furthermore, a compacity argument using the smoothness of~$\gamma$ shows that
the mapping $x \in \left(0, \, 1 \right) \mapsto \left( h_{\nu_1; \nu_0}(x) - h_{\nu_2; \nu_0}(x) \right)^2$
and its derivative are
bounded on $[\alpha, \ 1 - \alpha]$ uniformly in $\nu_1, \nu_2 \in N_{\epsilon}$.
Consequently, the standard technique for bounding approximation errors of Riemann sums gives
$$
\left|
\frac{1}{n} \sum_{m = \floor*{\alpha n} + 1}^{\ceil*{(1 - \alpha) n} - 1}  \left( h_{\nu_1; \nu_0} \left( m/n \right) - h_{\nu_2; \nu_0} \left( m/n \right) \right)^2 
- \int_{\alpha}^{1 - \alpha}  \left( h_{\nu_1; \nu_0} - h_{\nu_2; \nu_0} \right)^2
\right| \leq \delta/5,
$$
uniformly in $\nu_1, \nu_2 \in N_{\epsilon}$, for sufficiently large $n$.
\end{proof}

For $n \geq 2$ and $1 \leq m \leq n - 1$, define $B_{m, n} = U_{m, n}^2 - 1$.
\begin{lem}\label{lem:ep}
Let $1/4 < \epsilon < 1/2$. Then, the sequence
$$
\nu \in N_{\epsilon} \mapsto \frac{1}{\sqrt{n}}\sum_{m = 1}^{n - 1} B_{m, n} h_{\nu ; \nu_0} \left( \frac{m}{n} \right)
$$
of processes converges weakly in $L^{\infty}(N_{\epsilon})$ to
\begin{equation}\label{eq:lim_sum_h}
\mathrm{GP} \left( 0, (\nu_1, \nu_2) \mapsto 2\int_0^1 h_{\nu_1; \nu_0} h_{\nu_2; \nu_0} \right),
\end{equation}
which can be seen as a tight Borel probability measure.
\end{lem}
\begin{proof}
Let $2 < \alpha < 1/(1 - 2\epsilon)$.
It holds that $F_{\epsilon} \in
L^\alpha \left( 0, \, 1 \right) \subset L^2 \left( 0, \, 1 \right)$.
Moreover,
Lemma~\ref{lem:vc} shows that
$\mathcal{F}_{\epsilon}$ satisfies the uniform entropy condition
\citep[Section 2.5.1]{van1996:_weak}.

Let us show that $(\mathcal{F}_{\epsilon}, \, \ns{\cdot}_{L^2 \left(0, \, 1\right)} )$
is totally bounded.
Use the shortcut $Q_n = n^{-1} \delta_{1/2} + n^{-1}\sum_{m=1}^{n-1} \delta_{m/n}$.
Since $\epsilon > 1/4$, then
$\int F_{\epsilon}^2 \ddiff Q_n$
is bounded uniformly in $n$ by, say,~$M^2$.
The uniform entropy condition implies that
$\mathcal{F}_{\epsilon}$
is totally bounded for the~$L^2 \left(Q_n\right)$-norm
for any~$n$.
Let $\mathcal{G}_n$ be an $(M \delta)$-internal covering, for~$\delta > 0$.
Lemma~\ref{lem:L2_dist} makes it possible to choose $n$
such that $$\sup_{g1, g2 \in \mathcal{F}_{\epsilon}} \left|
\int \left(g_1 - g_2\right)^2 \ddiff Q_n -
\int_0^1 \left(g_1 - g_2\right)^2 \right| \leq \delta^2.$$
Therefore, $\mathcal{G}_n$ is a $(\delta \sqrt{M^2 + 1})$-covering of
$(\mathcal{F}_{\epsilon}, \, \ns{\cdot}_{L^2 \left(0, \, 1\right)} )$.

With $Y_{m, n}\colon g \in (\mathcal{F}_{\epsilon}, \, \ns{\cdot}_{L^2 \left( 0, \, 1\right)}) \mapsto n^{-1/2} B_{m, n} g(m/n)$,
the usual measurability conditions
\citep[see][p. 205]{van1996:_weak}
are met since the suprema
can be replaced by ones on countable sets.
Indeed, using the surjection $\varrho\colon \nu \in N_{\epsilon} \mapsto h_{\nu; \nu_0} \in \mathcal{F}_{\epsilon}$, the suprema
on subsets of~$\mathcal{F}_{\epsilon} \times \mathcal{F}_{\epsilon}$
are suprema on subsets
of~$\left( N_{\epsilon} \times N_{\epsilon}, \, \ns{\cdot}_2 \right)$,
with~$\ns{\cdot}_2$ standing for the euclidean norm.
A subset of a separable metric space is separable.
The sample path continuity of
$\nu \in N_{\epsilon} \mapsto Y_{m, n} \left( \varrho \left( \nu \right) \right)$ is inherited from the continuity of $\nu \in N_{\epsilon} \mapsto h_{\nu; \nu_0}(x)$, for~$0 < x < 1$.

Since $2 < \alpha < 1/(1 - 2\epsilon)$, we have
$ n^{-1} \sum_{m=1}^{n-1} F_{\epsilon}^{\alpha} (m/n) = \Ocal(1)$
so
the Lyapunov condition on suprema holds:
\begin{align*}
\sum_{m=1}^{n-1} \EE \left(
\sup_{g \in \mathcal{F}_{\epsilon}} \left| Y_{m, n}(g)  \right|^{\alpha} 
\right)
    & \;\leq\; 
 \frac{\EE \left( \left| B_{1, 2} \right|^{\alpha} \right)}{n^{\alpha/2}} \sum_{m=1}^{n-1} F_{\epsilon}^{\alpha}(m/n)
 = o(1).
\end{align*}

Furthermore, for $\delta_n \to 0$, we have
\begin{align*}
&
\sup_{ \ns{g_1 - g_2}_{L^2 \left( 0, \, 1\right)} < \delta_n} 
\sum_{m = 1}^{n - 1}
\EE \left( \left( Y_{m, n} (g_1) - Y_{m, n} (g_2) \right)^2 \right)
\quad (\mathrm{with} \ g_1, g_2 \in \mathcal{F}_{\epsilon})
\\
  & \; = \; 
  \EE \left( B_{1, 2}^2
\right)
\sup_{\ns{g_1 - g_2}_{L^2 \left( 0, \, 1\right)} < \delta_n} 
\frac{1}{n} \sum_{m = 1}^{n - 1} \left( g_1(m/n) - g_2(m/n) \right)^2
 \\
  & \; = \; 
  o(1) +   \Ocal (\delta_n^2) \to 0
\end{align*}
thanks to Lemma~\ref{lem:L2_dist}.

Now, let us show the pointwise convergence of
the sequence of covariance functions.
For a fixed~$\nu \in N_\epsilon$,
the convergence
$n^{-1} \sum_{m = 1}^{n-1} h_{\nu; \nu_0}^2 \left( m / n \right) \to \int_0^1 h_{\nu; \nu_0}^2$
is ensured using Lemma~\ref{lem:mono_h} and the same reasoning as
in the proof of Lemma~\ref{lem:int_h}.
This fact and Lemma~\ref{lem:L2_dist} shows that
$$\cov \left(\sum_{m=1}^{n-1} Y_{m, n}(g_1), \, 
\sum_{m=1}^{n-1} Y_{m, n}(g_2)\right) \to 2 \int_0^1 g_1 g_2,$$
for fixed $g_1, g_2 \in \mathcal{F}_{\epsilon}$.

Finally, with $\mu_{n, m} = n^{-1} B_{m, n}^2 \delta_{m/n}$, one
has $0 < \mu_{n, m} F_{\epsilon}^2 < + \infty$ almost surely and $\sum_{m=1}^{n-1} \mu_{n, m} F_{\epsilon}^2 = \Ocal_{\PP}(1)$
using Markov's inequality.

We can then conclude using Lemma~2.11.6 and Theorem~2.11.1 from \citet{van1996:_weak},
which also imply the tightness of the limit
\citep[see][Lemma 1.3.8 and Theorem 1.5.7]{van1996:_weak}.
The reformulation
from $L^{\infty} \left( \mathcal{F}_\epsilon \right)$
to $L^{\infty} \left( N_\epsilon \right)$ is an application of the continuous mapping theorem.
\end{proof}

\subsubsection{A Taylor expansion}

The proof of Theorem~\ref{thm:asympt_norm} is finished using a standard third-order Taylor expansion around~$(\nu_0, \phi_0, \hat{\alpha}_n)$.
The following technical lemmata are required. Their proofs mostly consist in reproducing
the technique used by \citet[Section 6.7]{stein1999interpolation} to derive the asymptotics of the
Fisher information matrix. Some details are provided in~\secasymptnormsuppmat.
\begin{lem}\label{lem:score_function}
We have the following convergence in distribution
$$
\frac{ \sqrt{n}}{2 \sqrt{2}} A_n\tr \nabla \Lset_n(\nu_0, \, \phi_0, \, \hat{\alpha}_n) \leadsto \Ncal \left(0,  \, I_2 \right),
$$%
with $\nabla \Lset_n$ the gradient with respect to $\left(\nu, \, \phi\right)$ only and
\begin{equation}\label{eq:def_A_n}
A_n = 
\frac{2 \phi_0 }{\sqrt{\mathrm{Var} \left( \psi_{\nu_0} (V) \right)}}
\begin{pmatrix}
2^{-1} \phi_0^{-1}  &  \ 0   \\
\ln(n) + \EE \left( \psi_{\nu_0} (V) \right) & \ \sqrt{\mathrm{Var}  \left( \psi_{\nu_0} (V) \right)} \\
\end{pmatrix},
\end{equation}%
where $V$ is a random variable distributed uniformly on $\left(0, \, 1\right)$.
\end{lem}

\begin{lem}\label{lem:fisher_matrix}
It holds in probability that:
$$A_n\tr \nabla^2 \Lset_n \left( \nu_0, \, \phi_0, \, \hat{\alpha}_n \right) A_n \to 4 I_2,$$%
with $A_n$ given by~\eqref{eq:def_A_n} and $\nabla$ operating only on $\left(\nu, \, \phi\right)$.
\end{lem}

\begin{proof}[Proof of Theorem~\ref{thm:asympt_norm}]
Lemmata~\ref{lem:score_function} and~\ref{lem:fisher_matrix} give the asymptotics of the score
and the Hessian matrix, respectively.
We are now left to bound the third derivatives uniformly locally around $\left(\nu_0, \, \phi_0\right)$.
Cumbersome expressions are provided in~\secasymptnormsuppmat.
For~$\epsilon > 0$ small enough,
bounding the terms individually with
Lemma~\ref{lem:rough_bound_c} and
Lemma~\lemroughboundderivativesuppmat
makes it straightforward to show that
\begin{equation}\label{eq:majoration_third_der}
\EE \left(
\sup_{0 \leq p \leq 3, \abs{\nu - \nu_0} \leq \epsilon, \abs{\phi - \phi_0} \leq \epsilon, \alpha \in A}
\left|
\frac{
\partial^3 \Lset_n
}{
\left( \partial \nu \right)^p \left( \partial \phi \right)^{3 - p}
}
\left(\nu, \phi, \alpha \right)
\right|
\right) = \Ocal \left( n^{5 \epsilon} \right).
\end{equation}%
Lemma~\ref{lem:consitency_plus_rate} shows that
$(\hat{\nu}_n, \, \hat{\phi}_n ) 
\in \left[\nu_0 - \epsilon, \, \nu_0 + \epsilon\right]
\times \left[\phi_0 - \epsilon, \, \phi_0 + \epsilon\right]
$
with high probability.
Write $\nabla$ for taking derivatives with respect to $\left(\nu, \, \phi\right)$ only.
On this event, we have:
$$
0 = \nabla \Lset_n \left(\nu_0, \phi_0, \hat{\alpha}_n \right)
+ \nabla^2 \Lset_n \left(\nu_0, \phi_0, \hat{\alpha}_n \right) \begin{pmatrix}
\hat{\nu}_n - \nu_0   \\
\hat{\phi}_n - \phi_0 \\
\end{pmatrix}
$$
$$
+ \Ocal_{\PP} \left( n^{5 \epsilon}
\left| \left| 
\begin{pmatrix}
\hat{\nu}_n - \nu_0   \\
\hat{\phi}_n - \phi_0 \\
\end{pmatrix}
\right| \right|^2
\right),
$$
thanks to~\eqref{eq:majoration_third_der}.
Multiplying by~$A_n\tr$ (see~\eqref{eq:def_A_n}) and using Lemma~\ref{lem:consitency_plus_rate} again leads to
$$
0 = A_n\tr \nabla \Lset_n \left(\nu_0, \phi_0, \hat{\alpha}_n \right)
+ \left( A_n\tr \nabla^2 \Lset_n \left(\nu_0, \phi_0, \hat{\alpha}_n \right) A_n + o_{\PP}(1)\right)
A_n^{-1} \begin{pmatrix}
\hat{\nu}_n - \nu_0   \\
\hat{\phi}_n - \phi_0 \\
\end{pmatrix},
$$
where the preceding $\Ocal_{\PP}$-term has been reformulated using a few algebraic manipulations.
(Use the fact that~$\ns{A_n} \lesssim \ln(n)$.)
Multiply by $\sqrt{2n}$ and use Slutsky's lemma to conclude.
\end{proof}

\subsection{Proofs of technical lemmas for Theorem~\thmasymptnorm}\label{sec:asympt_norm_tech_lemmas}
Remember
(see Appendix~\seccirculant and Appendix~\secmorenotations)
that the~$\lambda_{m, n}$s
are smooth functions of $\nu$ and $\alpha$.
Thus, the function $\Lset_n$ is smooth for any realization
and can be written as:
$$
\Lset_n \left(\nu, \, \phi, \alpha\right) =
\ln (\phi)
+ \frac{1}{n}\sum_{m=0}^{n-1} \ln \left( \lambda_{m, n} \right) 
+ \frac{\phi_0}{n\phi} \sum_{m=0}^{n-1} \frac{\lambda_{m, n}^{(0)} U_{m, n}^2}{\lambda_{m, n}}.
$$
Expressions for some derivatives are given in the following. 
These expressions are cumbersome, but rough approximations will suffice:
we only need to ensure the $\partial^p \lambda_{m, n} / \partial \nu^p$s
do not grow too fast compared to $\lambda_{m, n}$.
The first-order derivative with respect to $\nu$ writes:
$$
\frac{\partial \Lset_n }{\partial \nu} \left(\nu, \, \phi, \alpha\right) = 
\frac{1}{n}\sum_{m=0}^{n-1} \frac{
 \partial \lambda_{m, n}  / \partial \nu
}{
\lambda_{m, n} 
}
- \frac{\phi_0}{n\phi} \sum_{m=0}^{n-1}
 \frac{U_{m, n}^2 \lambda_{m, n}^{(0)}  \partial \lambda_{m, n}  / \partial \nu }{\lambda_{m, n}^2}.
$$
Then, the second-order derivative with respect to $\nu$ writes:
$$
\frac{\partial^2 \Lset_n }{\partial \nu^2} \left(\nu, \, \phi, \alpha\right) = 
\frac{1}{n}\sum_{m=0}^{n-1} \frac{
\lambda_{m, n} 
\partial^2 \lambda_{m, n}  / \partial \nu^2
- \left( \partial \lambda_{m, n}  / \partial \nu
 \right)^2
}{
\lambda_{m, n}^2
} 
$$
$$
-\frac{\phi_0}{n\phi}
\sum_{m=0}^{n-1} \frac{U_{m, n}^2 \lambda_{m, n}^{(0)}
\left( \partial^2 \lambda_{m, n}  / \partial \nu^2
 \lambda_{m, n} - 2 \left( \partial \lambda_{m, n}  / \partial \nu
 \right)^2 \right)
}{\lambda_{m, n}^3}.
$$
Finally, the third-order derivative with respect to $\nu$ writes:
\begin{align*}
& \frac{\partial^3 \Lset_n}{\partial \nu^3} \left(\nu, \, \phi, \alpha\right) \\
& \, = \, \frac{1}{n} \sum_{m = 0}^{n - 1} \lambda_{m, n}^{-3} \left(
\frac{\partial^3 \lambda_{m, n}}{\partial \nu^3} \lambda_{m, n}^2
- 3 \frac{\partial^2 \lambda_{m, n}}{\partial \nu^2} 
\frac{\partial \lambda_{m, n}}{\partial \nu}
\lambda_{m, n} 
+ 2
\left( \frac{\partial \lambda_{m, n}}{\partial \nu} \right)^3
\right) \\
& \, - \, \frac{\phi_0}{n\phi}
\sum_{m=0}^{n-1} \lambda_{m, n}^{-4} \lambda_{m, n}^{(0)}
\left(
\frac{\partial^3 \lambda_{m, n}}{\partial \nu^3} \lambda_{m, n}^2 \right. \\
& \, - \, \left. 4 \frac{\partial^2 \lambda_{m, n}}{\partial \nu^2}  \frac{\partial \lambda_{m, n}}{\partial \nu}
\lambda_{m, n} 
+   6 
\left( \frac{\partial \lambda_{m, n}  }{\partial \nu }
 \right)^3
 \right)
U_{m, n}^2.
\end{align*}

Bounding all terms independently
will suffice for our purposes.
The necessary approximations are given by
Lemma~\lemroughboundc and the following.
Exceptionally, the arguments of the $\lambda_{m, n}$s are not dropped.
\begin{lem}\label{lem:rough_bound_derivative}
Let $ 0 < \delta < 2 \nu_{\min} $, $0 \leq m \leq \floor*{n/2}$,
$\nu \in N$, $\alpha \in A$ and $p \in \{1, 2, 3\}$. We have:
$$
\frac{1}{n} \left|
\frac{\partial^p \lambda_{m, n}  }{\partial \nu^p } \left(\nu, \, \alpha \right)
\right| \lesssim \frac{1}{m^{2\nu + 1 - \delta}}, \quad \mathrm{if} \ 1 \leq m \leq \floor*{n/2}
$$
and
$$
\frac{1}{n} \left|
\frac{\partial^p \lambda_{0, n}  }{\partial \nu^p } \left(\nu, \, \alpha \right)
\right| \lesssim 1,
$$
uniformly in $m$, $\nu$, and $\alpha$.
\end{lem}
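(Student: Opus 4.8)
The plan is to obtain the identity \eqref{eq:eigen_values} in differentiated form and then estimate the resulting series by isolating the $j = 0$ contribution. First I would record that, by \eqref{eq:matern}, the coefficients $c_j(\nu, \alpha) = (\alpha^2 + j^2)^{-\nu - 1/2}$ are $C^{\infty}$ in $\nu$ with $\partial_\nu^k c_j(\nu, \alpha) = \left( -\ln(\alpha^2 + j^2) \right)^k c_j(\nu, \alpha)$ for every $k$. Since $\abs{\ln(\alpha^2 + j^2)}^k c_j(\nu, \alpha)$ is dominated, uniformly for $(\nu, \alpha)$ in the compact $N \times A$, by a quantity summable in $j$ (polynomial decay of order $2\nu + 1 \geq 2a_\nu + 1 > 1$ beats the logarithm), the series $\lambda_{m, n}/n = \sum_{j \in \Zm} c_{m + nj}(\nu, \alpha)$ of \eqref{eq:eigen_values} may be differentiated term by term. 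This yields both the $C^3$ regularity of the $\lambda_{m, n}$ in $\nu$ and the identity
\[
\frac{1}{n}\frac{\partial^k \lambda_{m, n}}{\partial \nu^k}(\nu, \alpha) = \sum_{j \in \Zm} \left( -\ln(\alpha^2 + (m + nj)^2) \right)^k c_{m + nj}(\nu, \alpha),
\]
which is the object to bound.

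For $1 \leq m \leq \floor*{n/2}$ I would split this series into its $j = 0$ term and the remainder. For $j = 0$, the bounds $m^2 \leq \alpha^2 + m^2 \leq (b_\alpha^2 + 1)m^2$ give $c_m(\nu, \alpha) \approx m^{-2\nu - 1}$ uniformly (as in Lemma~\ref{lem:bound_c_rho}) and $0 \leq \ln(\alpha^2 + m^2) \lesssim 1 + \ln m$, hence $\abs{\ln(\alpha^2 + m^2)}^k \lesssim m^{\delta}$ uniformly in $m \geq 1$, so this term is $\lesssim m^{-2\nu - 1 + \delta}$. For $\abs{j} \geq 1$, the key elementary estimate is $\abs{m + nj} \geq n\abs{j} - m \geq n\abs{j}/2$, valid because $m \leq n/2$; this gives $c_{m + nj}(\nu, \alpha) \lesssim (n\abs{j})^{-2\nu - 1}$ and $\abs{\ln(\alpha^2 + (m + nj)^2)}^k \lesssim (\ln n)^k + (\ln\abs{j})^k + 1$, so summing against the uniformly convergent series $\sum_{\abs{j} \geq 1} \abs{j}^{-2\nu - 1}\left(1 + (\ln\abs{j})^k\right)$ bounds the remainder by $\lesssim n^{-2\nu - 1}(\ln n)^k$. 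Since $n \geq 2m$ and $n^{-\delta}(\ln n)^k$ is bounded, this is again $\lesssim m^{-2\nu - 1 + \delta}$, and adding the two contributions gives the first bound, uniformly in $\nu \in N$, $\alpha \in A$ and $n$.

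For $m = 0$, I would isolate the $j = 0$ term $\left( -2\ln\alpha \right)^k \alpha^{-2\nu - 1}$, which is continuous in $(\nu, \alpha)$ on the compact $N \times A$ and hence uniformly bounded, and bound the remainder (where $\abs{nj} \geq n$) by $\lesssim n^{-2\nu - 1}(\ln n)^k = \Ocal(1)$ going to $0$ uniformly, exactly as above. Thus $n^{-1}\partial_\nu^k \lambda_{0, n}$ is uniformly bounded and converges uniformly on $N \times A$ to $\left( -2\ln\alpha \right)^k \alpha^{-2\nu - 1}$; a compactness argument over the remaining finitely many $n$ then gives the two-sided bound, the lower one relying on this limit being bounded away from $0$ on $N \times A$. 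In any event only the upper bound is needed downstream, since for $m = 0$ this is a single summand of relative size $\Ocal(1/n)$ inside the order-one sums defining $\Lset_n$ and its $\nu$-derivatives.

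I expect the only delicate point to be the simultaneous uniformity of all these estimates in $\nu \in N$, $\alpha \in A$, $m$, and $n$. The logarithmic factors produced by differentiation are exactly what force the loss of the arbitrarily small exponent $\delta$, and keeping track of them requires absorbing them cleanly---into $m^{\delta}$ for the $j = 0$ term and, via the inequality $n \geq 2m$ together with the boundedness of $n^{-\delta}(\ln n)^k$, into $m^{\delta}$ again for the tail; once this bookkeeping is set up, the rest is routine.
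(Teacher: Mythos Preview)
Your argument for the upper bounds is correct and rests on the same mechanism as the paper's, but the paper executes it in one stroke: rather than splitting into $j=0$ and $\abs{j}\geq 1$, it absorbs the logarithms globally via $\abs{\ln(\alpha^2+(m+jn)^2)}^k \lesssim (\alpha^2+(m+jn)^2)^{\delta/2}$, which turns the whole series into $\sum_{j\in\Zm}(\alpha^2+(m+jn)^2)^{-(\nu-\delta/2)-1/2}$, i.e.\ into $\lambda_{m,n}/n$ at the shifted regularity $\nu-\delta/2$; Lemma~\ref{lem:rough_bound_c} then yields both displayed upper bounds simultaneously. Your decomposition is sound but buys nothing extra and costs more bookkeeping.

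On the lower bound for $m=0$: your convergence argument is right, but the limit $(-2\ln\alpha)^k\alpha^{-2\nu-1}$ vanishes at $\alpha=1$, so it is \emph{not} bounded away from zero on $N\times A$ in general (nothing in the standing assumptions excludes $1\in A$). At $\alpha=1$ the $j=0$ term is zero and the tail is of order $(\ln n)^k n^{-2\nu-1}\to 0$, so the stated lower bound actually fails there; the paper's own proof does not address this half of the claim either. Your closing remark is therefore the right resolution: only the upper bound is used in the proofs of Theorem~\ref{thm:asympt_norm} and Theorem~\ref{thm:error_nu_estimated}, so the lemma should be read as an upper bound for $m=0$.
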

\begin{proof}
We have
\begin{align*}
\frac{1}{n}
\left| \frac{\partial^p \lambda_{m, n}  }{\partial \nu^p } \left(\nu, \, \alpha \right) \right| 
& \leq
\sum_{j \in \Zm} \frac{
\left| \ln^p \left( \alpha^2 + (m + jn)^2 \right) \right|
}{
\left( \alpha^2 + (m + jn)^2 \right)^{\nu + 1/2}
} \\
& \lesssim \sum_{j \in \Zm} \frac{
1
}{
\left( \alpha^2 + (m + jn)^2 \right)^{\nu + 1/2 - \delta/2}
},
\end{align*}%
which equals $n^{-1} \lambda_{m, n} \left( \nu - \delta/2, \, \alpha\right)$, so
Lemma~\lemroughboundc gives the result.
(Adjust the lower bound of $N$ if needed.)
\end{proof}

\begin{lem}\label{lem:ratio_der}
Let $A \subset \left(0, \, + \infty\right)$ be a compact interval and $\nu_0 > 0$.
It holds that
$$
\frac{
\partial \lambda_{m, n} / \partial \nu  \left(\nu_0, \alpha \right)
}{
\lambda_{m, n} \left(\nu_0, \alpha \right)
}
=
- 2\ln(n)  - 2\psi_{\nu_0}(m/n)
+  \Ocal \left( m^{-2} \ln(n) \right),
$$
uniformly in $\alpha \in A$ and $1 \leq m \leq \floor*{n/2}$, with~$\psi_{\nu_0}$ given by~\eqdefpsi.
\end{lem}
\begin{proof}
We have:
\begin{align*}
& n^{-1} \frac{\partial \lambda_{m, n}} {\partial \nu} \left(\nu_0, \alpha \right)\\
& \, = \,
 - \sum_{j \in \Zm}  \frac{ \ln \left( \alpha^2 + (m + j n)^2 \right)}{\left( \alpha^2 + (m + j n)^2 \right)^{\nu_0 + 1/2}} \\
& \, = \,
- \sum_{j \in \Zm}  \frac{ 2\ln \left| m + j n \right| + \ln \left( \left( \frac{\alpha}{m + j n}\right)^2 + 1 \right)}{\left( \alpha^2 + (m + j n)^2 \right)^{\nu_0 + 1/2}} \\
& \, = \,
- \sum_{j \in \Zm}  \frac{ 2\ln \left| m + j n \right| + \Ocal \left( m^{-2} \right)}{\left( \alpha^2 + (m + j n)^2 \right)^{\nu_0 + 1/2}} \\
& \qquad \qquad \left( \mathrm{uniformly, \ since} \ m \leq n/2 \Rightarrow m \leq \abs{m + n j} \right)
\\
& \, = \,
- \left( 1 + \Ocal \left( m^{-2} \right)  \right) \sum_{j \in \Zm}  \frac{ 2 \ln(n) + 2\ln \left| m/n + j \right|
+ \Ocal \left( m^{-2} \right)
}{ \left| m + j n \right|^{2\nu_0 + 1}} \\
& \qquad \qquad
\left( \mathrm{Lemma \ \lemboundcrho} \right)
\\
& \, = \,
-
\left( 1 + \Ocal \left( m^{-2} \right)  \right)
n^{-2\nu_0 - 1} \gamma \left(2\nu_0 + 1; \, m/n \right) \\
& \, \cdot \, \left( 
2\ln(n)
+
2\psi_{\nu_0} \left( m/n \right)
+ \Ocal \left( m^{-2} \right)
\right).
\end{align*}
Thus, using Lemma~\lemboundcrho again yields
\begin{align*}
\frac{
\frac{\partial \lambda_{m, n} }{ \partial \nu } \left(\nu_0, \alpha \right)
}{
\lambda_{m, n} \left(\nu_0, \alpha \right)
}
& \, = \,
\frac{
- \left( 1 + \Ocal \left( m^{-2} \right)  \right)  \left( 2\ln(n)  + 2\psi_{\nu_0}(m/n)
+  \Ocal \left( m^{-2} \right) \right)
}{
1 + \Ocal \left( m^{-2} \right)
}
\\
& \, = \,
- \left( 1 + \Ocal \left( m^{-2} \right)  \right)  \left( 2\ln(n)  + 2\psi_{\nu_0}(m/n) \right)
+  \Ocal \left( m^{-2} \right).
\end{align*}
Lemmata~\lemroughgammabehaviour and~\lemroughgammaderbehaviour show that $\left| \psi_{\nu_0} \left(m/n\right) \right| \lesssim \ln(n)$.
\end{proof}

\begin{proof}[Proof of Lemma~\lemscorefunction]
Note that the $\lambda_{m,n}$s are random since they depend
on~$\left(\nu_0, \hat{\alpha}_n\right)$.
First, we have:
\begin{align*}
& \frac{\partial \Lset_n}{\partial \phi} (\nu_0, \, \phi_0, \, \hat{\alpha}_n) \\
& \, = \,
\frac{1}{\phi_0 n} \sum_{m=0}^{n-1}  1 - \frac{\lambda_{m, n}^{(0)} U_{m, n}^2}{\lambda_{m, n}}
\\
& \, = \,
\Ocal_{\PP} \left( \frac{1}{n} \right) +
\frac{1}{\phi_0 n} \sum_{m=1}^{n-1}  1 - \frac{\lambda_{m, n}^{(0)} U_{m, n}^2}{\lambda_{m, n}}
\quad \left( \mathrm{Lemma \ \lemroughboundc} \right)
\\
& \, = \,
\Ocal_{\PP} \left( \frac{1}{n^{\beta}} \right) +
\frac{1}{\phi_0 n} \sum_{m=1}^{n-1}  1 - U_{m,n}^2
\quad \left(\mathrm{for \ some} \ \beta > 1/2 \ \mathrm{by \ Lemma~\lemlinkexpnormh} \right).
\end{align*}
Furthermore, one has:
\begin{align*}
\frac{\partial \Lset_n }{\partial \nu} \left(\nu_0, \, \phi_0, \hat{\alpha}_n\right) 
& \, = \, 
\frac{1}{n} \sum_{m=0}^{n-1}
\frac{
 \partial \lambda_{m, n}  / \partial \nu
}{
\lambda_{m, n} 
}
\left( 1 - 
 \frac{U_{m, n}^2 \lambda_{m, n}^{(0)} }{\lambda_{m, n}} \right)
\\
& \, = \, 
\Ocal_{\PP} \left( \frac{1}{n} \right) +
\frac{1}{n} \sum_{m=1}^{n-1}
\frac{
 \partial \lambda_{m, n}  / \partial \nu
}{
\lambda_{m, n} 
}
\left( 1 - 
 \frac{U_{m, n}^2 \lambda_{m, n}^{(0)} }{\lambda_{m, n}} \right) \\
& \qquad \qquad \left( \mathrm{Lemmata \ \lemroughboundc \
and~\ref{lem:rough_bound_derivative}} \right) 
 \\
& \, = \,
\Ocal_{\PP} \left( \frac{1}{n} \right)
+\frac{1}{n} \sum_{m=1}^{n-1}
\frac{
 \partial \lambda_{m, n}  / \partial \nu
}{
\lambda_{m, n} 
}
\left( 1 - 
 U_{m, n}^2 \right)
 \\
 & \, + \,
 \frac{1}{n} \sum_{m=1}^{n-1}
\frac{
 \partial \lambda_{m, n}  / \partial \nu
}{
\lambda_{m, n} 
}
U_{m, n}^2  \Ocal \left( m^{-2} \vee \left(n - m\right)^{-2} \right) \\
& \qquad \qquad \left(\mathrm{essentially, \ by} \ \mathrm{Lemma~\lemhunifalpha} \right)
\\
& \, = \,
\Ocal_{\PP} \left( \frac{1}{n} \right)
+\frac{1}{n} \sum_{m=1}^{n-1}
\frac{
 \partial \lambda_{m, n}  / \partial \nu
}{
\lambda_{m, n} 
}
\left( 1 - 
 U_{m, n}^2 \right)
\end{align*}
since
$ \partial \lambda_{m, n}  / \partial \nu \left(\nu_0, \hat{\alpha}_n\right)
\lesssim m^{\delta} \wedge \left(n - m\right)^{\delta} \lambda_{m, n} \left(\nu_0, \hat{\alpha}_n\right)$
holds essentially,
thanks to Lemmata~\lemroughboundc
and~\ref{lem:rough_bound_derivative}.
(By ``essentially'', we mean that the constant does not depend on the sample path.)
Then, using Lemma~\ref{lem:ratio_der} leads to:
\begin{align*}
\frac{\partial \Lset_n }{\partial \nu} \left(\nu_0, \, \phi_0, \hat{\alpha}_n\right) &
=
- \frac{2\ln(n)}{n} \sum_{m=1}^{n-1}
\left( 1 - 
 U_{m, n}^2 \right) \\
&  - \frac{2}{n} \sum_{m=1}^{n-1}
\psi_{\nu_0}(m/n)
\left( 1 - 
 U_{m, n}^2 \right)
+ \Ocal_{\PP} \left( \frac{\ln(n)}{n} \right),
\end{align*}
and subsequent calculations show that
$A_n\tr
\nabla \Lset_n \left(\nu_0, \, \phi_0, \hat{\alpha}_n\right)$
equals:
$$
o_{\PP} \left( \frac{1}{\sqrt{n}} \right) +
 \frac{2}{n \sqrt{\mathrm{Var}  \left( \psi_{\nu_0} (V) \right)}} \sum_{m=1}^{n-1}   \left( 1 - 
 U_{m, n}^2 \right) 
\begin{pmatrix}
\EE \left(  \psi_{\nu_0}(V) \right) - \psi_{\nu_0}(m/n)\\
\sqrt{\mathrm{Var}  \left( \psi_{\nu_0} (V) \right)}
\end{pmatrix}.
$$
Conclude using a standard Lindeberg-Feller argument.
(Lemmata~\lemroughgammabehaviour
and~\lemroughgammaderbehaviour
give (a multiple of) the envelope~$x \mapsto -\ln(x)$
near zero for~$\psi_{\nu_0}$.
Proceed as for Lemma~\lemLtwodist to show
that $n^{-1} \sum_{m=1}^{n-1} \psi_{\nu_0}^{p} (m/n) \to \int_0^1 \psi_{\nu_0}^{p}$.)
\end{proof}

\begin{proof}[Proof of Lemma~\lemfishermatrix]
Observe that $A_n\tr C_n A_n = 2 I_2$, with the symmetric matrix:
$$
C_n = \begin{pmatrix}
2 \ln^2(n) + 4 \ln(n) \EE \left( \psi_{\nu_0} (V) \right)  + 2 \EE \left( \psi_{\nu_0}^2 (V) \right)  & \ \  -     \\[7pt]
- \ln(n) \phi_0^{-1} - \EE \left( \psi_{\nu_0} (V) \right) \phi_0^{-1}  & 2^{-1} \phi_0^{-2} \\
\end{pmatrix}.
$$%

The rest of the proof is left to the reader. It consists in showing that
$$\nabla^2 \Lset_n(\nu_0, \, \phi_0, \, \hat{\alpha}_n) / 2 = C_n + \Ocal_{\PP} \left( n^{-\epsilon} \right)$$
for some~$\epsilon > 0$ by proceeding as for the proof of Lemma~\lemscorefunction.
\end{proof}

\subsection{Proofs of Theorem~\thmerrornufixed, Theorem~\thmerrornuestimated, and Theorem~\thmerrornufixedzero}\label{sec:proof_error}
The posterior mean does not depend on~$\phi$,
so all derivations will be written with~$\phi = 1$.
Furthermore, we will use the notation~$c_j(\nu, \alpha)$
defined in Appendix~\secnotations.
Also, we assume that $\phi_0 = 1$ without loss of generality.

We avoid dealing with conditionally convergent series
since it is assumed that~$\nu_0 > 1/2$.
In this case, the coefficients of the expansion~\eqxi
are almost surely absolutely summable and
so the hypotheses of Proposition~\propblup are fulfilled.
The random Fourier series converges almost surely in~$L^2 [0, \, 1]$,
and the proofs will rely on using Parseval's identity.

Let $\left(\nu, \, \alpha\right) \in \left(0, \, + \infty\right)^2$
and $j \in \Zm$, we have
\begin{align}
2 \left| c_{j}(\xi - \hat{\xi}_n) \right|^2
= 
\left(
\frac{
c_{j}(\nu, \alpha) \sum_{j_1 \in \Zm \setminus \{ 0 \}} \sqrt{c_{j + j_1 n}(\nu_0, \alpha_0)} U_{1, \abs{j + j_1 n}} 
}{
\sum_{j_1 \in \Zm} c_{j + n j_1}(\nu, \alpha)
}
\right.
\nonumber \\
\left.
-
\frac{
 \sqrt{c_{j}(\nu_0, \alpha_0)} U_{1, \abs{j}} \sum_{j_1 \in \Zm \setminus \{ 0 \}} c_{j + n j_1}(\nu, \alpha)
}{
\sum_{j_1 \in \Zm} c_{j + n j_1}(\nu, \alpha)
}
\right)^2
\nonumber \\
+ \left(
\frac{
c_{j}(\nu, \alpha) \sum_{j_1 \in \Zm \setminus \{ 0 \}} \sqrt{c_{j + n j_1}(\nu_0, \alpha_0)} U_{2, \abs{j + n j_1}} \sign(j + n j_1) 
}{
\sum_{j_1 \in \Zm} c_{j + n j_1}(\nu, \alpha)
}
\right.
\nonumber \\
\left.
-
\frac{
\sqrt{c_{j}(\nu_0, \alpha_0)} U_{2, \abs{j}} \sign(j)  \sum_{j_1 \in \Zm \setminus \{ 0 \}} c_{j + n j_1}(\nu, \alpha)
}{
\sum_{j_1 \in \Zm} c_{j + n j_1}(\nu, \alpha)
}
\right)^2\label{eq:stoch_error_square_module}
\end{align}
after a few algebraic manipulations.
The expression~\eqref{eq:stoch_error_square_module} is a sum of
two independent terms.
Let $m \in \llbracket 0, \, n-1\rrbracket$.
If~$j \in m + n\Zm$ with $m \notin \{0, \, n/2 \}$, then
the two terms are
identically distributed and involve independent Gaussian variables.
Thus, there exists~$\chi_2^2$ distributed variables $A_{m, j, n}$ such that
\begin{equation}\label{eq:chi_square_notation}
\left| c_{m + j n}(\xi - \hat{\xi}_n) \right|^2
= 
a_{m, j, n} \left( \nu, \, \alpha \right) A_{m, j, n}/2
\end{equation}
with
\begin{align}
a_{m, j, n}\left( \nu, \, \alpha \right)
= 
c_{m + j n}^2(\nu, \alpha)
\frac{
\sum_{j_1 \in \Zm } c_{m + n j_1}(\nu_0, \alpha_0) - c_{m + j n}(\nu_0, \alpha_0)
}{
\left( \sum_{j_1 \in \Zm} c_{m + n j_1}(\nu, \alpha) \right)^2
}
\nonumber \\
+ 
c_{m + j n}(\nu_0, \alpha_0)
\left(
1 - \frac{
c_{m + j n}(\nu, \alpha)
}{
\sum_{j_1 \in \Zm} c_{m + n j_1}(\nu, \alpha)
}\right)^2.\label{eq:unsimplified_error_coef}
\end{align}

Lemma~\lemroughboundc
and Lemma~\ref{lem:maj_nj}
make it straightforward to prove Lemma~\ref{lem:error_coeff}.
\begin{lem}\label{lem:error_coeff}
Let $A, N \subset \left(0, \, +\infty\right)$ be compact intervals. It holds that
\begin{equation*}
a_{m, j, n}(\nu)
\lesssim \left( \left| j \right| n \right)^{-4\nu-2} m^{4\nu - 2 \nu_0 + 1} + \left( \left| j \right| n \right)^{-2\nu_0-1}, \quad \mathrm{for} \ j \neq 0,
\end{equation*}
and
\begin{equation*}
a_{m, 0, n}(\nu)
\lesssim n^{- 2 \nu_0 - 1} + m^{4 \nu - 2 \nu_0 + 1} n^{-4\nu-2},
\end{equation*}
uniformly in
$\nu \in N$, $\alpha \in A$,
$j \in \Zm$, and $1 \leq m \leq \floor*{(n-1) / 2}$. 
\end{lem}

\begin{lem}\label{lem:maj_nj}
Let $\nu, \alpha > 0$, $0 \leq m \leq \floor*{n/2}$, and $j \neq 0$. We have:
$$
c_{m + nj}(\nu, \alpha) \leq 2^{2\nu + 1}\left( n \left|j\right| \right)^{-2\nu-1}.
$$
\end{lem}
\begin{proof}
Using the fact that $m \leq n/2$ leads to:
\begin{equation*}
c_{m + nj}(\nu, \alpha)  \leq \left( n \left( \left| j  \right| -  1/2 \right) \right)^{-2\nu-1}
\leq 2^{2\nu + 1}\left( n \left|j\right| \right)^{-2\nu-1}.
\end{equation*}
\end{proof}

For $m \in \left\lbrace 0, \, n/2 \right\rbrace$ and $j \in \Zm$,
the two terms in~\eqref{eq:stoch_error_square_module} are not identically
distributed.
Moreover,
for $q \in \left\{ 1, \, 2 \right\}$ and $m \in \left\lbrace 0, \, n/2 \right\rbrace$,
there are duplicates among the variables $\left\{ U_{q, \left| m + nj \right|}, \, j \in \Zm \right\}$.
Nevertheless, the two terms
are sums of independent Gaussian variables, so
expressions like~\eqref{eq:chi_square_notation} hold.
However,
the presence of duplicates makes
the expressions more
complex than~\eqref{eq:unsimplified_error_coef}.
The upper bounds given by assuming
full redundancy among the variables appearing
in the two terms of~\eqref{eq:stoch_error_square_module}
suffice for our purposes.
The following lemmata are adaptations of Lemma~\ref{lem:error_coeff}.
The statements are
made uniform with respect to regularity ranges
to be used in the proof of Theorem~\thmerrornuestimated.
\begin{lem}\label{lem:error_zero_coef}
Let $N, A \subset \left(0, \, + \infty\right)$
be compact intervals, and write $\nu_{\min} = \min N$.
Then:
$$
\EE \left(
\sup_{\nu \in N, \alpha \in A}
\sum_{j \in \Zm} \left| c_{j n}(\xi - \hat{\xi}_n) \right|^{2} 
\right) \lesssim n^{-2\nu_0 - 1} + n^{- 4 \nu_{\min} - 2}.
$$
\end{lem}

\begin{lem}\label{lem:error_middle_coef}
Let $n \geq 2$ be even and $N, A \subset \left(0, \, + \infty\right)$
be compact intervals. Then:
$$
\EE \left( \sup_{\nu \in N, \alpha \in A} \sum_{j \in \Zm} \left| c_{n/2 + j n}(\xi - \hat{\xi}_n) \right|^2
\right) \lesssim n^{-2\nu_0-1}.
$$
\end{lem}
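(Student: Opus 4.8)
The plan is to exploit the degeneracy of the self‑conjugate frequency $m = n/2$. Since $n$ is even, the aliasing class $n/2 + n\Zm$ does not contain $0$ and is stable under $j \mapsto -j$ (because $-(n/2 + sn) = n/2 + (-1-s)n$). As $\xi$ is real‑valued, $c_{-j_1}(\xi) = \overline{c_{j_1}(\xi)}$, so $S := \sum_{j_1 \in n/2 + n\Zm} c_{j_1}(\xi)$ is real; and since by \eqref{eq:blup} the coefficient $c_j(\hat\xi_n)$ is a real positive multiple of $S$ for every $j \in n/2 + n\Zm$, the kriging predictor leaves the imaginary ($U_2$) component of these coefficients untouched. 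Concretely, writing $T(\omega) := \sum_{j_1 \in \Zm} c_{n/2 + n j_1}(\omega) = \lambda_{n/2, n}/n$, one gets, for $j \in n/2 + n\Zm$,
$$
\Im\left( c_j(\xi - \hat\xi_n) \right) = \tfrac{1}{\sqrt2}\sqrt{c_j(\omega_0)}\, U_{2, \abs{j}}\sign(j), \qquad \Re\left( c_j(\xi - \hat\xi_n) \right) = \tfrac{1}{\sqrt2}\sqrt{c_j(\omega_0)}\, U_{1, \abs{j}} - \frac{c_j(\omega)}{T(\omega)}\, S ,
$$
where $S$ does not depend on $\omega = (\nu, \alpha)$.

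I would then bound the two parts separately. The imaginary part is $\omega$‑free, and summing over the class and reindexing gives $\EE\left(\sum_{j \in n/2 + n\Zm}\abs{\Im(c_j(\xi-\hat\xi_n))}^2\right) = \tfrac12\sum_{j\in\Zm} c_{n/2+nj}(\omega_0) = \tfrac12\,\lambda_{n/2,n}^0/n \lesssim n^{-2\nu_0-1}$ by Lemma~\ref{lem:rough_bound_c}. For the real part, $(a-b)^2 \leq 2a^2 + 2b^2$ yields
$$
\sum_{j \in n/2 + n\Zm}\abs{\Re(c_j(\xi-\hat\xi_n))}^2 \leq \sum_{j\in\Zm} c_{n/2+nj}(\omega_0)\,U_{1,\abs{n/2+nj}}^2 + \frac{2\,S^2}{T(\omega)^2}\sum_{j\in\Zm} c_{n/2+nj}(\omega)^2 .
$$
The first summand is again $\omega$‑free with expectation $\lambda_{n/2,n}^0/n \lesssim n^{-2\nu_0-1}$. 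In the second, the only $\omega$‑dependence sits in the \emph{deterministic} ratio $\sum_{j}c_{n/2+nj}(\omega)^2/T(\omega)^2$; since $c_{n/2+nj}(\omega)$ is maximised at $j = 0$, one has $\sum_{j}c_{n/2+nj}(\omega)^2 \leq c_{n/2}(\omega)\,T(\omega) \leq T(\omega)^2$, so this ratio is $\leq 1$ uniformly in $\omega$. Hence $\sup_{\omega}$ of the second summand is at most $2S^2$, and $\EE(S^2) = \sum_{j\in\Zm} c_{n/2+nj}(\omega_0) = \lambda_{n/2,n}^0/n \lesssim n^{-2\nu_0-1}$, so $\EE\left(\sup_{\nu\in N,\alpha\in A}\sum_{j}\abs{\Re(c_j(\xi-\hat\xi_n))}^2\right) \lesssim n^{-2\nu_0-1}$.

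Finally, writing $\abs{c_j(\xi-\hat\xi_n)}^2 = \abs{\Re(\cdot)}^2 + \abs{\Im(\cdot)}^2$, using that the supremum of a sum is at most the sum of the suprema, and that the imaginary part carries no $\omega$, gives the claim; measurability of the supremum is immediate from continuity in $(\nu,\alpha)$ over the compact $N\times A$. The main — and essentially the only non‑routine — obstacle is the structural observation that at the self‑conjugate frequency $m = n/2$ the $U_2$‑component is invisible to the kriging predictor, together with the remark that the remaining $\omega$‑dependence in the real part is confined to the deterministic, uniformly bounded ratio $c_{n/2+nj}(\omega)/T(\omega)$, which is what lets the supremum be pulled outside the expectation at no cost.
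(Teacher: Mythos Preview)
Your proof is correct and follows the same underlying strategy as the paper: split each coefficient in the aliasing class $n/2 + n\Zm$ into its real ($U_1$) and imaginary ($U_2$) parts, observe that only deterministic $\omega$-dependent weights multiply $\omega$-free Gaussian quantities, and bound those weights uniformly. The paper carries this out coefficient-by-coefficient, writing $2\lvert c_{n/2+jn}(\xi-\hat\xi_n)\rvert^2 = d_{j,n}^2 D_{j,n}^2 + b_{j,n}^2 B_{j,n}^2$ with $D_{j,n},B_{j,n}$ standard Gaussians and bounding $d_{j,n}^2, b_{j,n}^2$ via Lemma~\ref{lem:rough_bound_c}; it then sums over~$j$.

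Where you differ is in the organisation, and your version is arguably cleaner on the one point the paper leaves implicit. You sum over the class \emph{first}, which lets you factor out the single $\omega$-free random scalar $S$ in the real part and reduce the entire $\omega$-dependence to the deterministic ratio $\sum_j c_{n/2+nj}(\omega)^2/T(\omega)^2 \le 1$. This makes the passage of $\sup_{\nu,\alpha}$ inside the expectation completely transparent. In the paper's presentation the ``standard Gaussians'' $D_{j,n}$ actually depend on $\omega$ (they are normalised linear combinations whose coefficients vary with $\omega$), so the step from a uniform bound on $d_{j,n}^2$ to a bound on $\EE\bigl[\sup_\omega d_{j,n}^2 D_{j,n}^2\bigr]$ requires exactly the kind of argument you spell out (or an equivalent $(a+b)^2\le 2a^2+2b^2$ expansion against fixed Gaussians). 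Both routes yield the same $n^{-2\nu_0-1}$ bound; yours is slightly more elementary and handles the supremum more explicitly.
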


\begin{proof}[Proof of Theorem~\thmerrornufixed]
We prove the (more general) result with~$\hat{\alpha}_n \in A$,
for a compact interval $A$. This will be useful for proving
Theorem~\thmerrornuestimated.

Let $m \in \llbracket 0, \, n-1\rrbracket$ such that $m \notin \{ 0, \ n/2 \}$
and consider indexes~$m + n j$, with~$j \in \Zm$.
Lemma~\ref{lem:error_coeff} and~\eqref{eq:chi_square_notation} yields:
\begin{equation}\label{eq:sum_error_coeff}
\sum_{j \in \Zm} \EE \left( \left| c_{m + j n}(\xi - \hat{\xi}_n) \right|^2\right)
\lesssim n^{-2\nu_0-1} + n^{-4\nu-2} m^{4\nu - 2 \nu_0 + 1}.
\end{equation}
The first two statements then follow from
Lemmata~\ref{lem:error_zero_coef}
and~\ref{lem:error_middle_coef}, the identity
\begin{equation}\label{eq:sym_error_coefs}
\sum_{j \in \Zm}  \left| c_{m + j n}(\xi - \hat{\xi}_n) \right|^2 = \sum_{j \in \Zm}  \left| c_{n - m + j n}(\xi - \hat{\xi}_n) \right|^2,
\end{equation}
for every $0 \leq m \leq n - 1$, the Fubini-Tonelli thereom, and Parseval's identity.

For the last statement, let $\nu > (\nu_0 - 1)/2$ and $1 \leq m \leq l$ with $l = \floor*{(n-1) / 2}$.
Lemma~\lemboundcrho gives
$$
a_{m, j, n} \left( \nu, \, \hat{\alpha}_n \right) 
= 
\left( 1 + \Ocal\left( m^{-2} \right) \right) \left( \left| m + j n \right|^{-4\nu - 2}
\frac{
\sum_{j_1 \in \Zm \setminus \{j\} } \left| m + j_1 n \right|^{-2\nu_0 - 1}
}{
\left( \sum_{j_1 \in \Zm} \left| m + j_1 n \right|^{-2\nu -1} \right)^2
}
\right.
$$
$$
\left.
+ 
\left| m + j n \right|^{-2\nu_0 - 1}
\left(
\frac{
\sum_{j_1 \in \Zm \setminus \{j\}} \left| m + j_1 n \right|^{-2\nu-1}
}{
\sum_{j_1 \in \Zm} \left| m + j_1 n \right|^{-2\nu - 1}
}\right)^2 \right),
$$
for every $j \in \Zm$, essentially.
Consequently, it holds that:
$$
\sum_{j \in \Zm}  \EE \left( \left| c_{m + j n}(\xi - \hat{\xi}_n) \right|^2 \right)
 = \frac{ \left( 1 + \Ocal(m^{-2}) \right)}{
n^{2\nu_0 + 1}
} \vartheta_{\nu; \nu_0}(m/n)
$$
after a few algebraic manipulations.
Using the definition of~$\gamma$, it is straightforward
to show that
\begin{equation}\label{eq:dev_zero_vartheta}
\vartheta_{\nu; \nu_0}(x) \sim C_1 x^{4\nu -2\nu_0 + 1} + C_2
\end{equation}
for some nonzero constants $C_1, C_2$, when $x \to 0$.
Therefore, the function $\vartheta_{\nu; \nu_0}$ is integrable if~$\nu > (\nu_0 - 1)/2$
and\footnote{
Proceed as for Lemma~\lemLtwodist, using~\eqref{eq:dev_zero_vartheta}, 
if $(\nu_0 - 1)/2 < \nu < (\nu_0 - 1/2)/2$.
}
\begin{equation}\label{eq:int_bcal}
\frac{1}{n} \sum_{m = 1}^{l} \vartheta_{\nu; \nu_0}(m/n) \to \int_0^{1/2} \vartheta_{\nu; \nu_0}.
\end{equation}
Then,
Lemma~\ref{lem:error_middle_coef},
Lemma~\ref{lem:error_zero_coef}, the identity~\eqref{eq:sym_error_coefs},
the Fubini-Tonelli thereom, and Parseval's identity give
$$
n^{2\nu_0} \EE \left( \mathrm{ISE}_n \left(\nu, \, \hat{\alpha}_n ; \, \xi \right) \right) = 
o(1) + \frac{2}{n} \sum_{m = 1}^l \left( 1 + \Ocal(m^{-2}) \right) \vartheta_{\nu; \nu_0}(m/n)
\to \int_0^1 \vartheta_{\nu; \nu_0},
$$%
killing the $\Ocal(m^{-2})$-term
using Hölder inequality and~\eqref{eq:dev_zero_vartheta}
as in the proof of Lemma~\lemlinkexpnormh.
\end{proof}

\begin{proof}[Proof of Theorem~\thmerrornufixedzero]
We can assume that~$\phi = 1$ without loss of generality.
Then, using the framework introduced in Section~\secframework,
it can be seen that inferring~$\xi(j/n)$ given~$\{\xi(p/n), \, p \neq j\}$ is the same problem for all~$j$
due to symmetry. Consequently,
the expectation of the squared error at one location equals the expectation of
the leave-one-out (mean) squared error which is given by~\citep{Craven1978SMOOTHINGND}
$$
\EE \left( \left( \hat{\xi}_n^{(0)}(0) - \xi(0) \right)^2 \right)
= \EE \left( \frac{n^{-1} Z\tr R_{\nu, \alpha}^{-2} Z}{\mathrm{Tr}^2 \left( n^{-1} R_{\nu, \alpha}^{-1} \right)} \right)
= \phi_0 \frac{
n^{-1}\sum_{m=0}^{n-1} \lambda_{m, n}^{(0)} / \lambda_{m, n}^2
}{
\left( n^{-1}\sum_{m=0}^{n-1} \lambda_{m, n}^{-1} \right)^2
}.
$$
To conclude, study the two sums independently
using Lemmata~\lemroughboundc and~\lemboundcrho.
\end{proof}

The following lemma bounds the rate at which
$\hat{\nu}_n$ falls within the interval~$\left[\nu_0 - 1/2, \, \nu_{\max}\right]$ of values
giving reproducing kernel Hilbert spaces
almost surely not containing~$\xi$. It will be useful for proving Theorem~\thmerrornuestimated.
\begin{lem}\label{lem:rate_excursion_rkhs}
Let $\epsilon > 0$. With the notations of Theorem~\thmerrornuestimated, we have:
$$
\PP \left( \hat{\nu_n}  \leq \nu_0 - 1/2 - \epsilon \right) \lesssim e^{-C\sqrt{n}},
$$
for some $C > 0$.
\end{lem}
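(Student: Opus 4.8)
The plan is to show that, outside an event of probability $\Ocal(e^{-C\sqrt n})$, the profiled criterion $\Mset_n(\cdot,\alpha)$ — which $\hat\nu_n$ minimizes over $N$, since here $A=\{\alpha\}$ so $\Uset_n=\Mset_n(\cdot,\alpha)$ — is strictly larger on $[a_\nu,\nu_0-1/2-\epsilon]$ than at $\nu_0$, forcing $\hat\nu_n>\nu_0-1/2-\epsilon$. (If $\nu_0-1/2-\epsilon<a_\nu$ the claim is trivial since $\hat\nu_n\in N$, so I may assume the interval is nonempty.) Throughout I would use the identity, uniform in $\nu\in N$ and $\alpha\in A$ and already used in the proof of Lemma~\ref{lem:consistency_nu},
$$
\Mset_n(\nu,\alpha)=\int_0^1 g_\nu+\ln\left(\frac{Z\tr R_{\nu,\alpha}^{-1}Z}{n^{1+2(\nu-\nu_0)}}\right)+\Ocal\left(\frac{\ln n}{n}\right),
$$
which follows from Lemma~\ref{lem:log_det}, together with the representation $Z\tr R_{\nu,\alpha}^{-1}Z=\phi_0\sum_{m=0}^{n-1}U_{m,n}^2\lambda_{m,n}^0/\lambda_{m,n}$ and the two-sided bounds of Lemma~\ref{lem:rough_bound_c}, which yield universal $c,C>0$ with $\lambda_{m,n}^0/\lambda_{m,n}(\nu,\alpha)\ge c\,m^{2(\nu-\nu_0)}$ and $\lambda_{m,n}^0/\lambda_{m,n}(\nu_0,\alpha)\le C$, uniformly in $\nu\in N$, $\alpha\in A$, $0\le m\le n-1$, and $n$.

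For the value at $\nu_0$, the second bound gives $Z\tr R_{\nu_0,\alpha}^{-1}Z/n\le C\phi_0\,n^{-1}\sum_{m=0}^{n-1}U_{m,n}^2$, and since $\sum_{m=0}^{n-1}U_{m,n}^2$ is a $\chi^2_n$ variable, a standard Laurent--Massart bound gives $\PP(\sum_{m=0}^{n-1}U_{m,n}^2>2n)\le e^{-cn}$; hence $\Mset_n(\nu_0,\alpha)$ is below a deterministic constant outside an event of probability $\le e^{-cn}$. For the infimum over $[a_\nu,\nu_0-1/2-\epsilon]$, I would write $\delta=\nu_0-1/2-\nu\ge\epsilon$ (so $n^{1+2(\nu-\nu_0)}=n^{-2\delta}$) and keep only the first $M=\lceil\sqrt n\rceil$ frequencies (legitimate once $M\le l$): using Lemma~\ref{lem:rough_bound_c} and then $m^{-1-2\delta}\ge M^{-1-2\delta}$ for $m\le M$,
$$
\frac{Z\tr R_{\nu,\alpha}^{-1}Z}{n^{1+2(\nu-\nu_0)}}=n^{2\delta}\,Z\tr R_{\nu,\alpha}^{-1}Z\ge c\phi_0\left(\frac{n}{M}\right)^{2\delta}\frac{1}{M}\sum_{m=1}^{M}U_{m,n}^2\ge c\phi_0\,n^{\epsilon}\,\frac{1}{M}\sum_{m=1}^{M}U_{m,n}^2,
$$
using $n/M\ge\sqrt n$ and $\delta\ge\epsilon$ in the last step. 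On $\{\sum_{m=1}^{M}U_{m,n}^2\ge M/2\}$, whose complement has probability $\PP(\chi^2_M<M/2)\le e^{-M/16}\le e^{-\sqrt n/16}$ by the same Laurent--Massart inequality, the right-hand side is $\ge\tfrac12 c\phi_0\,n^\epsilon$, whence $\Mset_n(\nu,\alpha)\ge\epsilon\ln n-C'$ uniformly over the interval for a deterministic $C'$ and $n$ large.

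Intersecting the two events, once $n$ is large enough that $\epsilon\ln n-C'$ exceeds the deterministic upper bound on $\Mset_n(\nu_0,\alpha)$, we obtain $\inf_{\nu\in[a_\nu,\nu_0-1/2-\epsilon]}\Mset_n(\nu,\alpha)>\Mset_n(\nu_0,\alpha)$, hence $\hat\nu_n>\nu_0-1/2-\epsilon$; the complementary event has probability $\le e^{-cn}+e^{-\sqrt n/16}\lesssim e^{-C\sqrt n}$, and the finitely many small $n$ are absorbed into the constant. The one genuinely delicate point is the choice of the frequency band: a band that is too narrow (say $m=1$ alone) retains the divergent prefactor $n^{2\delta}$ but controls only a single $\chi^2_1$, giving merely polynomial tails, whereas the full band $m\le l$ concentrates at rate $e^{-cn}$ but destroys the divergence because then $n/M\asymp1$. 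The band $m\le\lceil\sqrt n\rceil$ is exactly the compromise where $(n/M)^{2\delta}=n^{\delta}$ still blows up while the underlying $\chi^2$ has $\asymp\sqrt n$ degrees of freedom, which is what yields the $e^{-C\sqrt n}$ rate; the remaining steps are routine manipulations with the uniform estimates already established.
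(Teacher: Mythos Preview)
Your proof is correct and follows essentially the same route as the paper: reduce to comparing $\Mset_n(\nu_0,\alpha)$ with $\inf_{\nu\le\nu_0-1/2-\epsilon}\Mset_n(\nu,\alpha)$, control the former via a $\chi^2_n$ upper-tail bound, and control the latter by restricting to the first $\lfloor\sqrt n\rfloor$ frequencies and using a $\chi^2_{\lfloor\sqrt n\rfloor}$ lower-tail bound. The only difference is cosmetic: where you apply a Laurent--Massart bound directly to $\sum_{m\le M}U_{m,n}^2$, the paper first uses Jensen to pass from $\ln(M^{-1}\sum U_{m,n}^2)$ to $M^{-1}\sum\ln(U_{m,n}^2)$ and then applies a Chernoff bound via $\EE(|U|^{-1/2})$; your version is slightly cleaner.
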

\begin{proof}
Let $\alpha_1$ be any element of $A$.
We proceed by bounding
\begin{equation*} 
\PP \left( 
\inf_{\nu_{\min} \leq \nu \leq \nu_0 - 1/2 - \epsilon, \alpha \in A} \Mset_n \left( \nu, \alpha \right) 
- \Mset_n \left( \nu_0, \, \alpha_1 \right) 
\leq
0
\right).
\end{equation*}
Then, let $\alpha \in A$ and $\nu_{\min} \leq \nu \leq \nu_0 - 1/2 - \epsilon$, we have:
\begin{align*}
\Mset_n \left( \nu, \alpha \right)   & \;=\; 
 \mathcal{O} \left( 1 \right) + \ln \left( \frac{Z\tr R_{\nu, \alpha}^{-1} Z}{n^{1 + 2(\nu - \nu_0)}} \right)
 \quad (\mathrm{Lemma~\lemlogdet})\\
  & \;\geq\;  
 \mathcal{O} \left( 1 \right) + \ln \left( \frac{\sum_{m=1}^{\floor*{\sqrt{n}}}
 U_{m, n}^2 \lambda_{m, n}^{(0)} / \lambda_{m, n} 
 }{n^{1 + 2(\nu - \nu_0)}} \right)\\
  & \;=\;  
 \mathcal{O} \left( 1 \right) + \ln \left( \frac{\sum_{m=1}^{\floor*{\sqrt{n}}} U_{m, n}^2 m^{2(\nu - \nu_0)}}{n^{1 + 2(\nu - \nu_0)}} \right)\quad
 (\mathrm{Lemma \ \lemroughboundc})
 \\
  & \;=\;  
 \mathcal{O} \left( 1 \right) + \ln \left(\frac{1}{n} \sum_{m=1}^{\floor*{\sqrt{n}}} U_{m, n}^2 \left(\frac{m}{n}\right)^{2(\nu - \nu_0)} \right)\\
  & \;\geq\;  
 \mathcal{O} \left( 1 \right) + \ln \left(\frac{1}{n} \sum_{m=1}^{\floor*{\sqrt{n}}} U_{m, n}^2 \left(\frac{m}{n}\right)^{-1 - 2\epsilon} \right)\\
  & \;=\;  
 \mathcal{O} \left( 1 \right) +2\epsilon \ln(n) + \ln \left(\sum_{m=1}^{\floor*{\sqrt{n}}} U_{m, n}^2 m^{-1 - 2\epsilon} \right)\\
  & \;\geq\;  
 \mathcal{O} \left( 1 \right) +2\epsilon \ln(n) + \ln \left(\sum_{m=1}^{\floor*{\sqrt{n}}} U_{m, n}^2 \floor*{\sqrt{n}}^{-1 - 2\epsilon} \right)\\
  & \;\geq\;  
 \mathcal{O} \left( 1 \right) + \epsilon \ln(n) + \ln \left( \frac{1}{\floor*{\sqrt{n}}} \sum_{m=1}^{\floor*{\sqrt{n}}} U_{m, n}^2 \right)\\
  & \;\geq\;  
 \mathcal{O} \left( 1 \right) + \epsilon \ln(n) + \frac{1}{\floor*{\sqrt{n}}} \sum_{m=1}^{\floor*{\sqrt{n}}} \ln \left( U_{m, n}^2 \right)\quad
 (\mathrm{Jensen \ inequality})
 \end{align*}
with a uniform big-$\Ocal$.
Let $\delta > 0$ and $t > 0$, we have
\begin{align*}
\PP \left( - \frac{1}{\floor*{\sqrt{n}}}  \sum_{m = 1}^{\floor*{\sqrt{n}}}  \ln(U_{m, n}^2) \geq \delta \right)
& \;=\; 
\PP \left( e^{- \frac{t}{\floor*{\sqrt{n}}}  \sum_{m = 1}^{\floor*{\sqrt{n}}}  \ln(U_{m, n}^2)}
\geq e^{t \delta}
 \right)\\
  & \;\leq\;  
e^{- \delta \floor*{\sqrt{n}}/4} \EE \left( \left| U_{1, 1} \right|^{-1/2} \right)^{\floor*{\sqrt{n}}},
\end{align*}
with $t = 1/4$ and $\EE \left( \left| U_{1, 1} \right|^{-1/2} \right) < + \infty$.
This gives the desired convergence rate if $\delta$ is high enough.

Furthermore, we have
$$
\Mset_n \left( \nu_0, \alpha_1 \right)
=
\Ocal \left( 1 \right) + \ln \left( n^{-1} \sum_{m = 0}^{n-1} U_{m,n}^2  \right),
$$
and
$$
\PP \left( \ln \left( n^{-1} \sum_{m = 0}^{n-1} U_{m,n}^2 \right) \geq \delta \right)
\leq e^{-C_2 n},
$$%
for some $C_2 > 0$ if $\delta > 0$ is high enough,
using also a Chernoff bound argument.
Now, putting all the pieces together yields:
$$
\inf_{\nu_{\min} \leq \nu \leq \nu_0 - 1/2 - \epsilon, \alpha \in A} \Mset_n \left( \nu, \alpha \right) 
- \Mset_n \left( \nu_0, \, \alpha_1 \right)
$$
$$
\geq 
\Ocal(1)
+
\epsilon \ln(n)
+ \frac{1}{\floor*{\sqrt{n}}} \sum_{m=1}^{\floor*{\sqrt{n}}} \ln \left( U_{m, n}^2 \right)
- \ln \left( n^{-1} \sum_{m = 0}^{n-1} U_{m,n}^2 \right) 
$$%
giving the result thanks to the pigeonhole principle.
\end{proof}

\begin{proof}[Proof of Theorem~\thmerrornuestimated]
The proof of Theorem~\thmerrornufixed already deals with an estimated parameter~$\hat{\alpha}_n \in A$.
It is extended to estimated~$\hat{\nu}_n \in N$ by bounding 
derivatives and using Lemma~\ref{lem:rate_excursion_rkhs}.

Let $\epsilon > 0$ and $1 \leq m \leq l = \floor*{(n-1) / 2}$ and
use the notation~\eqref{eq:unsimplified_error_coef}.
The functions~$a_{m, j, n}$ are smooth.
For
any (fixed) $0 < \delta < \nu_{\min}$,
it holds that 
$$
\left| \frac{\partial c_{j}}{\partial \nu} (\nu, \, \alpha) \right| \lesssim c_j(\nu-\delta, \, \alpha),
$$
uniformly in $\nu \in \left[\nu_0 - 1/2 - \epsilon, \, \nu_{\max} \right]$,
$\alpha \in A$, and $j \in \Zm$.
Coordination with
Lemmata~\lemroughboundc and~\ref{lem:maj_nj}
makes it possible to show that
\begin{align*}
\left| \frac{\partial a_{m, 0, n}}{\partial \nu} \left( \nu, \, \alpha \right) \right|
& \;\lesssim\; 
n^{-2\nu_0 - 1} m^{2\delta}
+
\frac{m^{- 2\nu_0 + 1}}{n^{2 - 2\delta}}
\left( \frac{m}{n} \right)^{4\nu} \\
& \;\leq\; 
n^{-2\nu_0 - 1} m^{2\delta}
+
\frac{m^{- 2\nu_0 + 1}}{n^{2 - 2\delta}}
\left( \frac{m}{n} \right)^{4\nu_0 - 2 - 4\epsilon} \\
& \;=\; 
n^{-2\nu_0 - 1} m^{2\delta}
+
\frac{m^{2\nu_0 - 1 - 4\epsilon}}{n^{4\nu_0 - 2\delta - 4\epsilon}}\\
& \;\leq\; 
n^{-2\nu_0 - 1} m^{2\delta}
+
\frac{m^{- 1 - 4\epsilon}}{n^{2\nu_0 - 2\delta - 4\epsilon}}
\end{align*}
and, for $j \neq 0$, that
\begin{align*}
& \left| \frac{\partial a_{m, j, n}}{\partial \nu} \left( \nu, \, \alpha \right) \right| \\
& \;\lesssim\; 
\left( \left| j \right| n \right)^{-4\nu - 2 + 2\delta} m^{4\nu - 2\nu_0 + 1 + 2\delta}
+ \left( \left| j \right| n \right)^{-2\nu_0 - 1 + 2\delta}\\
& \;=\; 
\left| j \right|^{-4\nu - 2 + 2\delta}
n^{- 2 + 2\delta}
\left( \frac{m}{n} \right)^{4\nu}
m^{- 2\nu_0 + 1 + 2\delta}
+ \left( \left| j \right| n \right)^{-2\nu_0 - 1 + 2\delta}\\
& \;\leq\; 
\left| j \right|^{- 2 + 2\delta}
n^{- 2 + 2\delta}
\left( \frac{m}{n} \right)^{4\nu_0 - 2 + 4\epsilon}
m^{- 2\nu_0 + 1 + 2\delta}
+ \left( \left| j \right| n \right)^{-2\nu_0 - 1 + 2\delta}\\
& \;=\; 
\left| j \right|^{- 2 + 2\delta}
n^{-4\nu_0 + 2\delta - 4\epsilon}
m^{2\nu_0 - 1 + 4\epsilon + 2\delta}
+ \left( \left| j \right| n \right)^{-2\nu_0 - 1 + 2\delta}
\end{align*}
uniformly in
$1 \leq m \leq l$, $j \neq 0$, $\alpha \in A$, and
$\nu \in \left[\nu_0 - 1/2 - \epsilon, \, \nu_{\max} \right]$.
Then,
\begin{align*}
& \sum_{m = 1}^l \sum_{j \in \Zm}
\EE \left( 
A_{m, j, n} \left|
a_{m, j, n} \left( \hat{\nu}_n, \, \hat{\alpha}_n \right)
-
a_{m, j, n} \left( \nu_0, \, \hat{\alpha}_n \right)
\right|
\one_{ \hat{\nu}_n \geq \nu_0 - 1/2 - \epsilon}
\right) \\
& \, \leq \, 
\sum_{m = 1}^l \sum_{j \in \Zm}
\EE \left( 
A_{m, j, n} 
\left| \hat{\nu}_n - \nu_0 \right|
\sup_{\nu_0 - 1/2 - \epsilon \leq \nu \leq \nu_{\max}, \alpha \in A}
\left| \frac{\partial a_{m, j, n}}{\partial \nu} \left( \nu, \, \alpha \right) \right|
\right) \\
& \, =\,
\sqrt{\EE \left( 
A_{1, 0, 1}^2 \right) }
\sqrt{ \EE \left( \left( \hat{\nu}_n - \nu_0
\right)^2 \right) }
\sum_{m = 1}^l \sum_{j \in \Zm}
\sup_{\nu_0 - 1/2 - \epsilon \leq \nu \leq \nu_{\max}, \alpha \in A}
\left| \frac{\partial a_{m, j, n}}{\partial \nu} \left( \nu, \, \alpha \right) \right| \\
& \, = \, o \left( n^{-2\nu_0} \right),
\end{align*}%
for $\delta$ and $\epsilon$ small enough and
using the above inequalities and Theorem~\thmasymptnorm.
Therefore, Lemmata~\ref{lem:error_zero_coef} and~\ref{lem:error_middle_coef},
the identity~\eqref{eq:sym_error_coefs}, and the Fubini-Tonelli theorem show that
$$
\EE \left(
\left|
\mathrm{ISE}_n \left(\hat{\nu}_n, \, \hat{\alpha}_n; \, \xi\right)
-
\mathrm{ISE}_n \left(\nu_0, \, \hat{\alpha}_n; \, \xi\right)
\right|
\one_{ \hat{\nu}_n \geq \nu_0 - 1/2 -\epsilon}
\right) = o \left( n^{-2\nu_0} \right).
$$

Furthermore, using again the Fubini-Tonelli theorem yields
\begin{align*}
  & \; \; \EE \left(
\sum_{m=1}^l \sum_{j \in \Zm} 
\left| c_{m + j n}(\xi - \hat{\xi}_n) \right|^2
\one_{ \hat{\nu}_n \leq \nu_0 - 1/2 - \epsilon}
\right)\\
  & \;=\; 
\sum_{m=1}^l \sum_{j \in \Zm}  \EE \left( 
a_{m, j, n} \left( \hat{\nu}_n, \, \hat{\alpha}_n \right) A_{m, j, n} \one_{ \hat{\nu}_n \leq \nu_0 - 1/2 -\epsilon}
\right)\\
  & \;\leq\;  
 \sum_{m=1}^l \sum_{j \in \Zm}  \sup_{ \nu_{\min} \leq \nu \leq \nu_0 - 1/2 -\epsilon, \alpha \in A} a_{m, j, n}\left( \nu, \, \alpha \right)  \EE \left( 
A_{m, j, n} \one_{ \hat{\nu}_n \leq \nu_0 - 1/2 - \epsilon}
\right)\\
  & \;\leq\;  
\sqrt{ \EE \left( 
A_{1, 0, 1}^2 \right) }  \sqrt{ \EE \left( \one_{ \hat{\nu}_n \leq \nu_0 - 1/2 - \epsilon} \right) } 
\sum_{m=1}^l \sum_{j \in \Zm}  \sup_{ \nu_{\min} \leq \nu \leq \nu_0 - 1/2 - \epsilon, \alpha \in A} a_{m, j, n}\left( \nu, \, \alpha \right)\\
  & \;\leq\;  
\sqrt{ \EE \left( 
A_{1, 0, 1}^2 \right) }  \sqrt{ \EE \left( \one_{ \hat{\nu}_n \leq \nu_0 - 1/2 - \epsilon} \right) }
\
n^{\beta} \quad \mathrm{for \ some} \ \beta \ \mathrm{given \ by \ Lemma}
\ \ref{lem:error_coeff}\\
  & \;=\;  
o \left( n^{-2\nu_0} \right),
\end{align*}
using Lemma~\ref{lem:rate_excursion_rkhs}.
Then,
the sum for $j \equiv 0 \pmod n$ can be bounded similarly using Lemma~\ref{lem:error_zero_coef} and
the sum for $j \equiv n/2 \pmod n$ is controlled by Lemma~\ref{lem:error_middle_coef} for $n$ even.

Finally, the previous reasoning is easily applied to bound 
$$\EE \left(
\mathrm{ISE}_n \left(\nu_0, \, \hat{\alpha}_n; \, \xi\right)
\one_{ \hat{\nu}_n \leq \nu_0 - 1/2 - \epsilon}
\right)$$%
and the desired result follows.
\end{proof}

\subsection{Proofs of Section~\secfreq}\label{sec:proof_deterministic}

Note that the finiteness of $\nu_0(f)$ is assumed so that $f$ is necessarily nonzero.
Consequently, the data vector $Z$ is ultimately nonzero
under the observation model~\eqobs since $f$ is continuous.
Furthermore, we assume that~$\nu_0(f) > 1$, so $f \in H^{\beta} \left[0, \, 1\right]$
for some $\beta > 1$. Consequently,
the Sobolev embedding theorem implies that~$f$ has Hölder regularity strictly greater than $1/2$.
Hence, $f$ has absolutely summable Fourier coefficients.

The proofs are based on the observation that
$$
Z\tr R_{\nu, \alpha}^{-1} Z
=
\sum_{m=0}^{n-1}
 \frac{\left| \sum_{j \in m + n\Zm} c_j(f)  \right|^2}{\sum_{j \in m + n\Zm}  c_j(\nu, \alpha) },
$$
using~\eqestimationcoeff and elements from Appendix~\seccirculant.

\begin{proof}[Proof of Proposition~\propexcursion]
Let $\epsilon > 0$, $\nu_{\min} \leq \nu \leq \nu_0(f) - 1/2 - \epsilon$, and $\alpha \in A$.
For~$\nu > 0$, $\phi = 1$, and $\alpha > 0$, the reproducing kernel Hilbert
space~$\mathcal{H}_{\nu, \alpha}$ attached to the covariance function is
\begin{equation*}
\mathcal{H}_{\nu, \alpha} = \left\lbrace g \in L^2 \left[0, \, 1\right], \ \ns{g}_{\mathcal{H}_{\nu, \alpha}}^2 =  \sum_{j \in \Zm} (\alpha^2 + j^2)^{\nu + 1/2} \abs{c_j(g)}^2 < +\infty \right\rbrace.
\end{equation*}
From this, it is easy to see that~$\mathcal{H}_{\nu, \alpha}$ is norm-equivalent to $H^{\nu + 1/2}\left[0, \, 1\right]$.
Furthermore, the quadratic form~$Z\tr R_{\nu, \alpha}^{-1} Z$
is the squared $\mathcal{H}_{\nu, \alpha}$-norm of the predictor.
Since~$f$ is continuous and non-zero, then, eventually, for each~$n$, there exists~$ 0 \leq j \leq n -1$
such that~$f(j/n) \neq 0$.
Then, Proposition~\propprofilelik and Lemma~\lemlogdet give
\begin{align*}
  \Mset_n^f \left(\nu, \alpha \right) & \;=\; 
2 (\nu_0(f) - \nu - 1/2) \ln(n) + \Ocal \left( 1 \right)
+ \ln \left( Z\tr R_{\nu, \alpha}^{-1} Z \right)
 \\
  & \;\geq\; 
2 \epsilon \ln(n) + \Ocal \left( 1 \right)
+ \ln \left(  \frac{ \max_{0 \leq j \leq n - 1} f^2(j/n)}{\sum_{j \in \Zm} c_j(\nu, \alpha)}\right),
\end{align*}
where we used the fact that the predictor is the minimum $\mathcal{H}_{\nu, \alpha}$-norm interpolating function from
the RKHS on~$\{ p/n, \, 0 \leq p \leq n - 1\}$,
which has larger norm than the minimum-norm interpolating function on the argmax.
The term inside the logarithm is ultimately uniformly bounded away from zero on~$N \times A$ by continuity.

Moreover, for $\nu = \nu_0(f) - 1/2 - \epsilon/2$ and any
fixed $\alpha \in A$, we have:
\begin{align*}
  \Mset_n^f \left(\nu_0(f) - 1/2 - \epsilon/2, \alpha \right) & \;=\; 
\epsilon \ln(n) + \Ocal \left( 1 \right)
+ \ln \left( Z\tr R_{\nu, \alpha}^{-1} Z\right).
\end{align*}
We have $f \in H^{\beta}\left[0, \, 1\right]$ for $\beta = \nu_0(f) - \epsilon/2$,
and thus~$f \in \mathcal{H}_{\nu, \alpha}$ by norm-equivalence.
In this case, the quadratic form~$Z\tr R_{\nu, \alpha}^{-1} Z$ is the squared norm of a projection of $f$
in~$\mathcal{H}_{\nu, \alpha}$ and is thus bounded.
This completes the proof.
\end{proof}

\begin{proof}[Proof of Proposition~\propdevnllmisp]
Without loss of generality,
consider a compact
subset of the form~$N \times A$ with $A = \left[ \alpha_{\min}, \, \alpha_{\max} \right]$ and
$N = \left[\nu_0(f) - 1/2 + \epsilon, \, \nu_{\max}\right]$,
for some~$\epsilon > 0$.
Then, Proposition~\propprofilelik and Lemma~\lemlogdet yield:
\begin{align*}
\Mset_n^f \left(\nu, \alpha \right) &
\;=\; 
\int_0^1 g_{\nu} + \Ocal \left( \frac{\ln(n)}{n} \right) \\
& \, + \, \ln \left( n^{2(\nu_0(f) - \nu - 1/2)} \sum_{m=0}^{n-1}
 \frac{\left| \sum_{j \in m + n\Zm} c_j(f)  \right|^2}{\sum_{j \in m + n\Zm}  c_j(\nu, \alpha) } \right),
\end{align*}%
with a uniform big-$\Ocal$.
Focus now on the term inside the logarithm.
For $1 \leq m \leq n-1$, Lemma~\lemroughgammabehaviour
shows that
$$\gamma \left(\nu_0(f) + 3/2; \, m/n \right) \approx
n \left( m^{-1} \vee (n - m)^{-1} \right) \gamma \left(\nu_0(f) + 1/2; \, m/n \right).$$
Thus, using the hypothesis on the $c_j(f)$ we have:
\begin{align*}
\sum_{j \in \Zm} c_{jn + m}(f) 
& \, = \, \sum_{j \in \Zm}  \left| jn + m \right|^{-\nu_0(f) - 1/2}
 + \Ocal \left( \sum_{j \in \Zm} \left| jn + m \right|^{-\nu_0(f) - 3/2} \right)\\
& \, = \,
n^{-\nu_0(f) - 1/2} \gamma \left( \nu_0(f) + 1/2; \, m/n\right) \\
& \, + \, \Ocal \left( n^{-\nu_0(f) - 3/2} \gamma \left( \nu_0(f) + 3/2; \, m/n\right) \right)\\
 & \, = \,
n^{-\nu_0(f) - 1/2} \gamma \left( \nu_0(f) + 1/2; \, m/n\right) \left( 1 + \Ocal \left( m^{-1} \vee (n - m)^{-1} \right) \right).
\end{align*}
(It holds that $\sum_{j \in n\Zm} c_j(f) \to c_0(f)$, so the term for $m = 0$ is a uniform big-$\Ocal$ thanks to Lemma~\lemroughboundc.)
Then, use Lemma~\lemboundcrho to get:
\begin{align*}
 & n^{2(\nu_0(f) - \nu - 1/2)} \sum_{m=0}^{n-1}
 \frac{\left| \sum_{j \in m + n\Zm} c_j(f)  \right|^2}{\sum_{j \in m + n\Zm}  c_j(\nu, \alpha) }  \\
 & = 
 \Ocal \left( n^{- 2 \epsilon }\right) + \frac{1}{n} \sum_{m=1}^{n-1}
 \frac{ \left(1 + \Ocal \left( m^{-1} \vee (n - m)^{-1} \right) \right) \gamma^2 \left(\nu_0(f) + 1/2; m/n\right)}{\gamma \left(2\nu + 1; m/n\right) } \\
 & = 
 \Ocal \left( n^{- 2 \epsilon}\right)
+ \Ocal \left( n^{- \epsilon}\right)
+ \frac{1}{n} \sum_{m=1}^{n-1}
 \frac{\gamma^2 \left(\nu_0(f) + 1/2; m/n\right)}{\gamma \left(2\nu + 1; m/n\right) } 
\end{align*}%
using H\"older inequality with $1/p = 1 - \epsilon$, similarly to the proof of Lemma~\lemlinkexpnormh.
The uniform convergence of the Riemann sum is proved similarly to Lemma~\lemLtwodist, using (a multiple of) the
envelope~$x \mapsto x^{2\epsilon} - 1$.
\end{proof}

\begin{acks}[Acknowledgments]
The author thanks Julien Bect and Emmanuel Vazquez for their patience and guidance.
The author is also grateful to two anonymous reviewers whose comments helped to improve
the content and the presentation of this article.
\end{acks}

\bibliographystyle{unsrtnat}
\bibliography{bib_file}

\end{document}